\theoremstyle{plain}
  \newtheorem{theorem}[subsection]{Theorem}
  \newtheorem{proposition}[subsection]{Proposition}
  \newtheorem{lemma}[subsection]{Lemma}
  \newtheorem{corollary}[subsection]{Corollary}
\def\bphi{{\boldsymbol{\phi} }}
\def\bpsi{{\boldsymbol{\psi} }}
\def\F{{\mathcal F}}
\def\R{{\mathbb{R}}}
\def\I{{\mathcal I}}
\def\A{{\mathcal A}}
\def\B{{\mathcal B}}
\def\Q{{\mathcal Q}}
\def\H{{\mathcal H}}
\def\N{{\mathcal N}}
\def\E{{\mathcal E}}
\def\ch{\mbox{ch} (k)}
\def\chbb{\mbox{chb} (k)}
\def\trigh{\mbox{trigh} (k)}
\def\sh{\mbox{sh} (k)}
\def\shh{\mbox{sh}}
\def\chh{\mbox{ch}}
\def\shhb{\overline {\mbox{sh}}}
\def\th{\mbox{th} (k)}
\def\chhb{\overline {\mbox{  ch}}}
\def\shbb{\mbox{shb} (k)}
\def\shb{\overline{\mbox{sh}(k)}}
\def\z{\zeta}
\def\zb{\overline{\zeta}}
\def\FF{{\mathbb{F}}}
\def\L{\Lambda}
\theoremstyle{remark}
  \newtheorem{remark}[subsection]{Remark}
\theoremstyle{definition}
\begin{document}

\def\vac{{\big\vert 0\big>}}
\def\fpsi{{\big\vert\psi\big>}}
\def\bphi{{\boldsymbol{\phi} }}
\def\bpsi{{\boldsymbol{\psi} }}
\def\F{{\mathcal F}}
\def\R{{\mathbb{R}}}
\def\I{{\mathcal I}}
\def\Q{{\mathcal Q}}
\def\ch{\mbox{\rm ch} (k)}
\def\cht{\mbox{\rm ch} (2k)}
\def\chbb{\mbox{\rm chb} (k)}
\def\trigh{\mbox{\rm trigh} (k)}
\def\p{\mbox{p} (k)}
\def\sh{\mbox{\rm sh} (k)}
\def\sht{\mbox{\rm sh} (2k)}
\def\shh{\mbox{\rm sh}}
\def\chh{\mbox{ \rm ch}}
\def\th{\mbox{\rm th} (k)}
\def\thb{\overline{\mbox{\rm th} (k)}}
\def\shhb{\overline {\mbox{\rm sh}}}
\def\chhb{\overline {\mbox{\rm ch}}}
\def\shbb{\mbox{\rm shb} (k)}
\def\chbt{\overline{\mbox{\rm ch}  (2k)}}
\def\shb{\overline{\mbox{\rm sh}(k)}}
\def\shbt{\overline{\mbox{\rm sh}(2k)}}
\def\z{\zeta}
\def\zb{\overline{\zeta}}
\def\FF{{\mathbb{F}}}
\def\S{{\bf S}}
\def\Sr{{\bf S}_{red}}
\def\W{{\bf W}}
\def\WW{{\mathcal W}}
\def\N{{\bf N}}
\def\L{{\mathcal L}}
\def\E{{\mathcal E}}
\def\X{{\tilde X}}
\def\Nor{{\rm Nor}}

\def \kb{{\bf k}}
\def\x{{\bf x}}
\def\fhat{{\widehat{f}}}
\def \ghat{{\widehat{g}}}
\def \fhatb{{\widehat{f}^{\ast}}}
\def \ghatb{{\widehat{g}^{\ast}}}
\def \Sb{\mathbb S}
\def\Zb {\mathbb Z}
\def\Lm{\Lambda}
\def \Lmb{\overline{\Lambda}}
\def \Gam{\Gamma}
\def\Gamb{\overline{\Gamma}}
\def\Lmh{\widehat{\Lambda}}
\def\Fhat{\widehat{F}}
\def\Fhatb{\overline{\widehat{F}}}
\def\E{{\mathcal E}}
\def\Sop{{\bf S}^{t}_{x_{1},x_{2}}}
\def\Wop{{\bf S}^{\pm,t}_{x_{1},x_{2}}}
\def\Sp{{\bf S}^{t}_{x_{1}}}
\def\nb{\nabla}
\def\Dl{\Delta}
\def\Acal{{\mathcal A}}
\def\ub{\overline{u}}
\def\tr{{\rm tr}}
\def\dig{{\rm diag}}

\def\Hbb{{\mathbb H}}
\def\Ubb{{\mathbb U}}
\def\Nbb{{\mathbb N}}

\def\zvec{\vec{z}}
\def\xvec{\vec{x}}
\def\zvecp{\vec{z}^{\prime}}


\title[Estimates for HFB ]%
{ Uniform in $N$ estimates for a Bosonic system of Hartree-Fock-Bogoliubov type}

\author{M. Grillakis}
\address{University of Maryland, College Park}
\email{mng@math.umd.edu}

\author{M. Machedon}
\address{University of Maryland, College Park}
\email{mxm@math.umd.edu}

\subjclass{35Q55, 81}
\keywords{Hartree-Fock-Bogoliubov}
\date{\today}
\dedicatory{}
\commby{}

\maketitle
\begin{abstract}
We prove local in time, uniform in $N$, estimates for the solutions $\phi$, $\Lambda$ and $\Gamma$ of a coupled system
of Hartree-Fock-Bogoliubov type with interaction potential $v_N(x-y) =N^{3 \beta} v(N^{\beta}(x-y))$, with  $\beta <1$ and $v$ a Schwartz function (satisfying additional technical requirements). The initial conditions are general functions in a Sobolev-type space, and the expected correlations in $\Lambda$ develop dynamically in time.  As shown in our previous work, as well as the work of J. Chong, (both in the case $\beta<2/3$),  using the conserved quantities of the system of equations, this type of local in time estimates can be extended globally. Also, they can be used to derive Fock space estimates for the approximation of the exact evolution of a Bosonic system by quasi-free states of the form $e^{\sqrt N \A(\phi)}e^{\B(k)} \Omega$. This will be addressed in detail in future work.

\end{abstract}
\section{Introduction}

Let $v_N(x-y) =N^{3 \beta} v(N^{\beta}(x-y))$, with  $\beta <1$ and $v$ a Schwartz function satisfying additional technical requirements.
This paper is devoted to obtaining estimates, uniformly in $N$, for solutions to

\begin{subequations}
\begin{align}
& \hskip 0.3 in \left\{\frac{1}{i}\partial_{t}-\Delta_{x_1}\right\}\phi(x_{1}) \label{evphi}
=-\int dy\left\{
v_N(x_{1}-y)\Gamma(y,y)\right\}\phi(x_{1})
\\
&-\int dy\left\{v_N(x_{1}-y)\phi(y)\big(\Gamma(y,x_{1})-\overline{\phi}(y)\phi(x_1)\big)
+v_N(x_{1}-y)\overline{\phi}(y)\big(\Lambda(x_{1},y)-\phi(x_1)\phi(y)\big)\right\} \notag\\
 & \notag\\
&  \hskip 0.3 in \left\{\frac{1}{i}\partial_{t}-\Delta_{x_1}-\Delta_{x_2}+\frac{1}{N}v_N(x_{1}-x_{2})\right\}\Lambda(x_1, x_2) \label{evLambda2}\\
& =-\int dy \left\{v_N(x_{1}-y)\Gamma(y,y) + v_N(x_{2}-y)\Gamma(y,y)\right\}\Lambda(x_1, x_2)
\nonumber
\\
&-\int dy\left\{\big(v_N(x_{1}-y)+v_N(x_{2}-y)\big)\Big(\Lambda(x_{1},y)\Gamma(y,x_{2})+
\overline{\Gamma}(x_{1},y)\Lambda(y,x_{2})\Big)
\right\}+
\nonumber
\\
&+2\int dy\left\{\big(v_N(x_{1}-y)+v_N(x_{2}-y)\big)\vert\phi(y)\vert^{2}\phi(x_{1})\phi(x_{2})\right\}\notag\\
& \notag\\
& \hskip 0.3 in \left\{\frac{1}{i}\partial_{t}-\Delta_{x_1}+\Delta_{x_2}\right\}\overline \Gamma(x_1, x_2)\label{evGamma2}\\
&
=
-\int dy\left\{\big(v_N(x_{1}-y)-v_N(x_{2}-y)\big){\Lambda}(x_{1},y)\overline\Lambda(y,x_{2})\right\}+
\notag
\\
&-\int dy\left\{\big(v_N(x_{1}-y)-v_N(x_{2}-y)\big)\Big(\overline\Gamma(x_{1},y)\overline\Gamma(y,x_{2})+\overline\Gamma(y,y)\overline\Gamma(x_{1},x_{2})\Big)
\right\}
\nonumber
\\
&+2\int dy\left\{\big(v_N(x_{1}-y)-v_N(x_{2}-y)\big)\vert\phi(y)\vert^{2}{\phi}(x_{1})\overline\phi(x_{2})\right\} \notag
\end{align}
\end{subequations}

 The functions $\phi, \Lambda $ and $\Gamma$ also depend  on $N$ and $t$, but this has been suppressed to keep the formulas shorter.
See \ref{normalization} and \ref{BB1}-\ref{BB3},    for the conceptual meaning of these equations, and the normalizations used.
 In particular,  the number of particles is $ N tr (\Gamma)$. As explained in section 2, this is a conserved quantity.
  In addition, the initial conditions are chosen
 to have $O(1)$ Sobolev-type norms.
The spacial dimension is $3$.
These equations are similar in spirit to the Hartree-Fock-Bogoliubov equations for Fermions. For Bosons, they were derived
in \cite{GM2}, \cite{GM3}, and, independently, in \cite{BBCFS}  and the recent paper \cite{BSS}, which addresses both Fermions and Bosons .

Reading this paper requires no knowledge of Fock space. However, as background material, we will review the "big picture" motivating this work, as well as \cite{GMM1}-\cite{GM3}. See these papers for  background on Fock space and additional references related to this project.
Our work (started in \cite{GMM1} in collaboration with D. Margetis) is devoted to the problem of approximating the exact evolution (in Fock space)

\begin{align}
 \psi_{exact}=e^{i t \H}e^{-\sqrt{N}\A(\phi_{0})}e^{-\B(k_{0})}\Omega \label{exact}
 \end{align}
by an expression of the form \ref{approx}.
 Here $\H$ is
 the Fock Hamiltonian
\begin{align}
&\H:=\int dx\left\{
a^{\ast}_{x}\Delta a_{x}\right\} -\frac{1}{2N}\int dxdy\left\{v_{N}(x-y)a^{\ast}_{x}a^{\ast}_{y}a_{x}a_{x}\right\}
\label{FockHamilt1-a}
\end{align}

The function $\phi$ is a function of $3+1$ variables,
\begin{equation}
\A(\phi):=\int dx\left\{\bar{\phi}(x)a_{x}-\phi(x)a^{\ast}_{x}\right\} \label{meanfield-2}
\end{equation}
 and $e^{-\sqrt{N}\A(\phi)}$ is a unitary operator on Fock space,  the Weyl operator.

The "pair excitation" function  $k=k(t, x, y)$ is  symmetric in $x$ and $y$, and
 \begin{align}
&\B(k):=
\frac{1}{2}\int dxdy\left\{\bar{k}(t,x,y)a_{x}a_{y}
-k(t,x,y)a^{\ast}_{x}a^{\ast}_{y}\right\}\ . \label{pairexcit-2}
\end{align}
The unitary operator $e^{\B(k)}$ is the representation of an (infinite dimensional) real symplectic matrix (the Segal-Shale-Weil representation, see
\cite{shale})
 and is called a Bogoliubov transformation in the Physics literature (elements of such a construction go back to \cite{Bog47}).

$\Omega$ is the vacuum, and the state
\begin{equation}\label{cohstates}
\psi:=e^{-\sqrt{N}\A(\phi)}\Omega \ .
\end{equation}
is a coherent state (the $n$th  entry of this state is a tensor product of $n$ copies of $\phi$).
\begin{equation*}
e^{-\sqrt{N}\A(\phi)}\Omega =\left(\ldots\ c_{n}\prod_{j=1}^{n}\phi(x_{j})\ \ldots\right)
\quad {\rm with}\quad  c_{n}=\big(e^{-N \|\phi\|^2_{L^2}}N^{n}/n!\big)^{1/2}\ .
\end{equation*}

 The state
$e^{-\B(k_{0})}\Omega$ is called a squeezed state in the Physics literature.

The Fock space Hamiltonian acts as a PDE Hamiltonian on each entry of Fock space
\begin{align*}
H_{n, \, \, PDE}=\sum_{j=1}^{n}\Delta_{x_{j}} - \frac{1}{N}
\sum_{i<j}N^{3\beta}v\big(N^{\beta}(x_{j}-x_{k})\big)
\end{align*}

In the Math literature, the Fock space evolution $e^{i t \H}$ has been studied in the 70s by by Hepp in \cite{hepp}, Ginibre and Velo \cite{G-V} and, 30 years later, by
Rodnianski and Schlein \cite{Rod-S}, followed by \cite{GMM1} (where $e^{\B}$ is explicitly introduced).

The problem is to find a Fock space approximation of $\psi_{exact}$ (which involves linear Schr\"odinger equation in an unbounded number of variables) by
\begin{equation}\label{approx}
\psi_{approx}:=e^{-\sqrt{N}\A(\phi(t)}e^{-\B(k(t))}\Omega
\end{equation}
where $\phi(t, x)$, $k(t, x, y)$
satisfy non-linear Schr\"odinger equations in $3+1$, respectively $6+1$ variables.
 As explained below, the motivation for the equations discussed in this paper is that they describe a good choice of $\phi$ and $k$ accomplishing this approximation. There are several equivalent ways of writing these equations, see \cite{GM2}, \cite{GM3}.
The equation for $k$ is best expressed in terms of the auxiliary functions $\Gamma$ and $\Lambda$, which are functions of $\phi$ and $k$ and are generalized marginal density matrices for the proposed Fock space approximation.

 Explicitly,
the equations \ref{evphi}-\ref{evGamma2} were written in \cite{GM3}  in an equivalent form, reminiscent of BBGKY, in terms of the marginal densities $\L_{i, j}$, defined by
\begin{align}
&\L_{m, n}(t,y_{1},\ldots, y_{m}; x_{1},\ldots, x_{n}) \label{normalization}\\
&:=\frac{1}{N^{(m+n)/2}}\big<a_{y_1}\cdots a_{y_m} \psi_{approx}, \,  a_{x_1} \cdots a_{x_n}  \psi_{approx} \big> \notag
\end{align}
In terms of these, the equations are

\begin{subequations}
\begin{align}
&\left(\frac{1}{i} \partial_t
-\Delta_{x_1}\right)\label{BB1}
\L_{0, 1}(t, x_1)
  \\
&= -  \int v_N(x_1 - x_2) \L_{1, 2} (t,  x_2; x_1,   x_2) dx_2 \notag \\
& \notag\\
&\left(\frac{1}{i} \partial_t \label{BB2}
+\Delta_{x_1}  - \Delta_{y_1} \right)\L_{1, 1}(t, x_1; y_1)
 \\
&=  \int v_N(x_1 - x_2) \L_{2, 2} (t, x_1, x_2; y_1, x_2) dx_2 \notag
 - \int v_N(y_1 - y_2)\L_{2, 2} (t, x_1, y_2; y_1, y_2)dy_2 \notag\\
 & \notag\\
&\left(\frac{1}{i} \partial_t \label{BB3}
-\Delta_{x_1} -\Delta_{x_2}
+\frac{1}{N}  v_N(x_1-x_2)\right)\L_{0, 2}(t, x_1, x_2)\\
&= -
 \int v_N(x_1-y)\L_{1, 3}(t, y; x_1, x_2, y)dy
  -
 \int v_N(x_2-y)\L_{1, 3}(t, y; x_1, x_2, y)dy \notag
\end{align}
\end{subequations}
Here  (denoting by $\circ$ composition of operators, or the corresponding kernels, $(u \circ v)(x, y)=\int u(x, z)v(z, y) dz$)
\begin{align}
&\phi=\L_{0, 1}\label{def1phi}\\
&\Gamma=\L_{1, 1}=\frac{1}{N}\left(\shb \circ \sh\right)(t, x_1, x_2)+ \bar \phi(t, x_1) \phi(t, x_2)\label{def1Gamma}\\
 &\Lambda=\L_{0, 2}=\frac{1}{2N}\sht (t, x_1, x_2) +\phi(t, x_1) \phi(t, x_2)\label{def1Lambda}
 \end{align}
  where we defined
 \begin{align*}
\sh &:=k + \frac{1}{3!} k \circ \overline k \circ k + \ldots~, \\
\ch &:=\delta(x-y) + \frac{1}{2!}\overline k  \circ k + \ldots~
\end{align*}

The other $\L$ function can be expressed in terms of
$\phi$, $\Lambda$ and $\Gamma$, leading to \ref{evphi}- \ref{evGamma2}.

 Once $\phi$, $\Lambda$ and $\Gamma$ are known, the pair excitation function used in the approximation
\eqref{approx}
can be obtained and estimated from the equation
\begin{align*}
&\S\left(\sht\right)
= - v_N \Lambda\circ \cht
 -\chbt \circ (v_N \Lambda)\\
& -\left(\left(v_N* Tr \Gamma\right)(x)
 + \left(v_N* Tr \Gamma\right)(y)\right) \sht(x, y)\\
&-\left(v_N\Gamma\right)\circ \sht - \sht\circ \left(v_N\Gamma\right)
 \end{align*}
There is a similar equation for $\cht$, see Theorem 7.1 in \cite{GM2}.

The problem of proving a Fock space  estimate for $\|\psi_{exact}-\psi_{approx}\|$ is currently an active field.
See \cite{XC1}, \cite{elif2}, and \cite{BCS}.
 In that paper, Boccato, Cenatiempo and Schlein prove a   result  in the  range $\beta <1$,  and the estimate is global in time.
 Their  approximation
 is given (translating to our notation) by
 $e^{i \chi(t)}e^{-\sqrt{N}\A(\phi(t))}e^{-\B(k(t))}U_{2, N}(t) \Omega$ where
  $\phi$ satisfies the expected cubic nonlinear Schr\"odinger equation,
  $k(t)=k(t, x, y)$ is explicit  and  $U_{2, N}(t)$ is an evolution in Fock space with a quadratic generator (see the page preceding Theorem 1.1 in \cite{BCS}). The function $k(t)$ (which corresponds to pair correlations) has to be present in the initial data.

   Regarding the analysis of such correlations, see also \cite{C-H2}, \cite{EMS}.

  Our approach, based on coupled PDEs for $\phi$ and $k$, offers the possibility to start with uncorrelated initial data, and allow the correlations to develop dynamically,  in very short time.  In the present paper, we obtain estimates for the generalized marginal densities $\Lambda$, $\Gamma$ and $\phi$. In our previous paper \cite{GM3}
  (devoted to the case $0<\beta<\frac{2}{3}$)
  we showed how estimates for $\Lambda$, $\Gamma$ and $\phi$ imply estimates for $k$, which in turn imply Fock space estimates for the proposed approximation \ref{approx}. Our work was extended globally in time in \cite{jacky2}.
  (See also   \cite{jacky1} for the analysis of these equations in $1+1$ dimensions.)

For $\frac{2}{3} \le \beta <1$, there are greater difficulties in both the PDE problem of obtaining estimates for $\Lambda$, $\Gamma$ and $\phi$, and the Fock space approximation. For this reason, we treat the two issues separately. This paper addresses only the PDE problem, leaving the Fock space estimates for future work.  We conjecture  that, using the estimates of this paper, it is possible to prove
\begin{align*}
&\|\psi_{exact}-\psi_{appr}\|_{\F}:=\|e^{i t \H}e^{-\sqrt{N}
\A(\phi_{0})}e^{-\B(k(0))}\Omega-
e^{i \chi(t)}e^{-\sqrt{N}\A(\phi(t))}e^{-\B(k(t))}\Omega\|_{\F}\\
&\le \frac{Ce^{C e^{Ct}}}{N^{\alpha }} \, \, \, \mbox{for } \,  \alpha < \frac{1-\beta}{2}
\end{align*}
for suitable initial conditions, which include pure coherent states ($k_0=0$), and a phase function $\chi$. This type of estimate is comparable to \cite{BCS},
but allows more freedom in the choice of initial conditions.

Fock space techniques can also be applied to $L^2(\mathbb R^N)$ approximations. See the recent paper \cite{BNNS} and the references therein. We also mention the approach of \cite{K-P}.

 We end this section with some general comments regarding the equations studied in this paper. Since we are looking for estimates which hold uniformly in $N$, it is instructive to replace $v_N$ by $\delta$. Then \ref{evphi}-\ref{evGamma2} become, formally,
\begin{align*}
& \hskip 0.3 in \left\{\frac{1}{i}\partial_{t}-\Delta_{x_1}\right\}\phi(x_{1})
=-\Gamma(t, x_1, x_1)\phi(x_{1}) + \cdots
 \\
&  \hskip 0.3 in \left\{\frac{1}{i}\partial_{t}-\Delta_{x_1}-\Delta_{x_2}\right\}\Lambda(x_1, x_2) \\
& =-\left\{\Gamma(x_1,x_1) +
\Gamma(x_2,x_2)\right\}\Lambda(x_1, x_2) + \cdots
\\
& \hskip 0.3 in \left\{\frac{1}{i}\partial_{t}-\Delta_{x_1}+\Delta_{x_2}\right\}\overline \Gamma(x_1, x_2)\\
&
=
-\Lambda(x_{1},x_1)\overline\Lambda(x_1,x_{2})+
 + \Lambda(x_{1},x_2)\overline\Lambda(x_2,x_{2})
\cdots
\end{align*}
 These equations are invariant under the following scaling \\ $\lambda \phi(\lambda^2 t, \lambda x_1)$, $\lambda^2 \Lambda(\lambda^2 t, \lambda x_1, \lambda x_2)$, $\lambda^2 \Gamma(\lambda^2 t, \lambda x_1, \lambda x_2)$, which is $H^1$ critical for $\Lambda$ and $\Gamma$.
However, scaling does not detect the collapse to the diagonal $x_1=x_2$ .
For instance, if we replace $\Lambda(x_{1},x_1)\overline\Lambda(x_1,x_{2})$ by $|\Lambda(x_1,x_{2})|^2$, the scaling would be the same. Since there is loss of regularity in going from $|\Lambda(x_1,x_{2})|^2$ to $|\Lambda(x_1,x_1)|^2$, this make the equations
  $H^1$ supercritical.  On the other hand, as explained in the next section, the conserved energy for the full system has the scaling of $H^2$ for $\Gamma$, but only $H^1$ for $\Lambda$.
(However, as shown in \cite{jacky2}, a norm of $\Lambda$ with the scaling of $H^{1+\epsilon}$ grows in time, but not $N$).
Using these conserved quantities the local existence theorem (and estimates)of the current paper can be extended globally in time. See \cite{jacky2} how this works out the case $\beta<2/3$. We plan to address the problem of global estimates in detail in a forthcoming paper.

The plan of the paper is as follows: Section 2 reviews the conserved quantities (derived in our earlier paper \cite{GM2}), and contains some additional comments. Section 3 has the statement of the main (nonlinear) theorem, and a simple statement of the main linear estimates, which are shown to imply the main non-linear theorem . Sections 4 reviews standard $X^{s, b}$ type estimates needed for the proof. Section 5 introduces new estimates, including a technical statement of the main linear result. The rest of the paper is devoted to the proof of the main linear estimates. Each of those sections starts with a heuristic guide to the proofs in that section.

\bigskip

Acknowledgment. We thank Daniel Tataru for a very useful conversation regarding the estimates of this paper.

\section{Conservation laws, and further comments} The system  \ref{evphi}-\ref{evGamma2}
 has two important conserved quantities.
 We mention this fact here although we do not use it in the  construction of solutions locally in time, where the solutions are obtained by a fixed point argument using the norms and  estimates of Theorem \eqref{mainnonlinthm}.
These conserved quantities would be crucial to extend our estimates globally in time. Solutions to the system considered in this paper, but without potentials depending on $N$, have been constructed in \cite{BBCFS}.

The first conserved quantity is the total number of particles (normalized by division by $N$)
and it is
\begin{align}
M :=
{\rm tr}\left\{\Gamma(t)\right\}\ . \label{b5-nbr0}
\end{align}
This means that $M=1$.
The second conserved quantity is the energy per  particle
\begin{align}
E&:= {\rm tr}\left\{\nabla_{x_{1}}\cdot\nabla_{x_{2}}\Gamma(t)\right\}
+\frac{1}{2}
\int dx_{1}dx_{2}\left\{v_{N}(x_{1}-x_{2})\big\vert\Lm(t,x_{1},x_{2})\big\vert^{2}
\right\}
\nonumber
\\
&+\frac{1}{4}
\int dx_{1}dx_{2}\left\{
v_{N}(x_{1}-x_{2})\left(\big\vert\Gamma(t,x_{1},x_{2})\big\vert^{2}
+\Gamma(t,x_{1},x_{1})\Gamma(t,x_{2},x_{2})\right)
\right\}
\nonumber
\\
&-\frac{1}{2}\int dx_{1}dx_{2}\left\{v_{N}(x_{1}-x_{2})
\vert\phi(t,x_{1})\vert^{2}\vert\phi(t,x_{2}\vert^{2}
\right\}
\label{b5-enrg0}
\end{align}
which is conserved by the evolution. Notice that the quantity $E$ defined above is
positive because
the following inequality is true:
$\Gamma(t,x_{1},x_{2}) \geq \overline{\phi}(t,x_{1})\phi(t,x_{2})$
(in the sense of operator kernels, with $\overline{\phi}(t,x_{1})\phi(t,x_{2})$ representing the kernel of orthogonal projection onto $\bar \phi$.)

In order to see what kind of regularity the conservation laws imply, we observe that the
kinetic part of the energy is,
\begin{align*}
{\rm tr}\left\{\nb_{x_{1}}\cdot\nb_{x_{2}}\Gamma\right\}
=\int dx\left\{\vert\nb_{x}\phi(t,x)\vert^{2}\right\}
+\frac{1}{2N}\int dx_{1}dx_{2}\left\{\vert\nb_{x_{1}}u\vert^{2}+\vert
\nb_{x_{2}}u\vert^{2}\right\}\ .
\end{align*}
Now if we observe that, denoting $u:=\sh$, $c:=\ch$ (see \ref{def1Gamma}, \ref{def1Lambda})
\begin{align*}
\psi :=\sht =2u\circ c\qquad ,\qquad c\circ c=\delta +\ub\circ u
\end{align*}
a simple calculation implies the inequality,
\begin{align*}
&\int dx_{1}dx_{2}\left\{
\big\vert\nb_{x_{1}}\psi\big\vert^{2}+\big\vert\nb_{x_{2}}\psi\big\vert^{2}\right\}
\\
&\leq 4\left(\int dx_{1}dx_{2}\left\{
\vert\nb_{x_{1}}u\vert^{2}+\vert\nb_{x_{2}}u\vert^{2}\right\}\right)
\left(1+\int dx_{1}dx_{2}\vert u\vert^{2}\right)\ .
\end{align*}
Since we have
$\Lm=\phi\phi+(1/2N)\psi$
then
\begin{align*}
\int dx_{1}dx_{2}\left\{
\big\vert\nb_{x_{1}}\Lm\big\vert^{2}+\big\vert\nb_{x_{2}}\Lm\big\vert^{2}\right\}
\leq CE\big(M+N^{-1}\big)
\end{align*}
i.e. the $H^{1}$ norm of $\Lm$ is bounded by a constant which is
 independent of time.
  In fact one can show, see \cite{jacky2}, for some $\epsilon >0$,
 \begin{align*}
 \int dx_{1}dx_{2}\left\{
\big\vert\nb_{x_{1}}\big\vert^{1/2+\epsilon}\big\vert\nb_{x_{2}}\big\vert^{1/2+\epsilon}\Lm\right\}^2
\leq C(t)
\end{align*}
 where $C(t)$ is independent of $N$.

 On the other hand,
$\Gamma =\overline{\phi}\phi+(1/N)\ub\circ u$
which implies a better estimate, namely
\begin{align*}
\big\Vert\vert\nb_{x_{1}}\vert\vert\nb_{x_{2}}\vert\Gamma(t)\Vert_{L^{2}(dx_{1}dx_{2})}
\leq E\ .
\end{align*}
It is interesting to observe that the construction of solutions is accomplished  using norms
which are ( slightly) below the thresholds indicated by the above estimates, see Theorem \ref{mainnonlinthm}.

\begin{remark}
It is an interesting task (at this point) to write down the evolution equations in the case
of aligned Fermions. For the simple Hamiltonian ( defined on Fermionic Fock space)
\begin{align*}
\H:=\int dx_{1}dx_{2}\left\{
c^{\ast}_{x_{1}}\big(\Dl_{x_{1}}\delta(x_{1}-x_{2})\big)c_{x_{2}}
+c^{\ast}_{x_{1}}c^{\ast}_{x_{2}}v(x_{1}-x_{2})c_{x_{2}}c_{x_{1}}
\right\}
\end{align*}
we have the following type of reduction.
We form two (pair) functions $\omega(t,x_{1},x_{2})$ and
$\psi(t,x_{1},x_{2})$. They satisfy certain relations.
Namely $\omega^{\ast}=\omega$ and $\psi^{T}=-\psi$.
Moreover $\omega(t,x,x)\geq 0$ and they are not independent of each other, they
must satisfy
\begin{align*}
-\psi\circ\overline{\psi} +\omega\circ\omega -2\omega =0\ .
\end{align*}

The two evolution equations are,
\begin{align*}
&\frac{1}{i}\partial_{t}\psi +\left(\Dl_{x_{1}}+\Dl_{x_{2}}-2v(x_{1}-x_{2})\right)\psi
\\
&+\int dy\left\{v(x_{1}-y)\left(\overline{\omega}(x_{1},y)\psi(y,x_{2})
+ \psi(x_{1},y)\omega(y,x_{2})-\omega(y,y)\psi(x_{1},x_{2})
\right)
\right\}
\\
&+\int dy\left\{v(y-x_{2})\left(\psi(x_{1},y)\overline{\omega}(y,x_{2})
+\omega(x_{1},y)\psi(y,x_{2})-\omega(y,y)\psi(x_{1},x_{2})
\right)
\right\}=0
\end{align*}
and
\begin{align*}
&-\frac{1}{i}\partial_{t}\omega +\left(-\Dl_{x_{1}}+\Dl_{x_{2}}\right)\omega
\\
&-\int dy\left\{v(x_{1}-y)\left(\overline{\psi}(x_{1},y)\psi(y,x_{2})
+\omega(x_{1},y)\omega(y,x_{2})-\omega(y,y)\omega(x_{1},x_{2})\right)
\right\}
\\
&+\int dy\left\{ v(y-x_{2})\left(\psi(x_{1},y)\overline{\psi}(y,x_{2})
+\omega(x_{1},y)\omega(y,x_{2})-\omega(y,y)\omega(x_{1},x_{2})\right)
\right\}=0\ .
\end{align*}
The analogy is $\Gamma\mapsto \omega^{T}$ and $\Lambda\mapsto \psi$
but notice that $\psi$ is now anti-symmetric!

The system conserves the number of particles,
\begin{align*}
N:=\frac{1}{2}{\rm tr}\big(\omega\big)=\int dx\left\{
\omega(t,x,x)\right\}
\end{align*}
and the total energy,
\begin{align*}
E:=&\frac{1}{2}{\rm tr}\left\{\nabla_{x_{1}}\cdot\nabla_{x_{2}}\omega(t,x_{1},x_{x_{2}})\right\}
+\frac{1}{4}\int dx_{1}dx_{2}\left\{
v(x_{1}-x_{2})\big\vert\psi(t,x_{1},x_{2})\big\vert^{2}\right\}
\\
&+\frac{1}{4}\int dx_{1}dx_{2}\left\{v(x_{1}-x_{2})\left(
\omega(t,x_{1},x_{1})\omega(t,x_{2},x_{2})-\big\vert\omega(t,x_{1},x_{2})
\big\vert^{2}\right)
\right\}\ .
\end{align*}
We may observe that we can set (consistently) $\psi =0$ is which case we obtain the
familiar equation for $\omega$ (which must be a projection).

The equations above are written without any scaling. If we scale space by $N^{\beta}$
then we can rescale appropriately the interaction potential $v$.
The most natural assumption for $v$ is Coulomb. See \cite{BPS}  and \cite{BSS} for related work on Fermionic systems.

\end{remark}

We end this section with some comments on the structure of the nonlinear terms
which (if thought the right way) look "simple".
We adopt the following convention for (skew-Hermitian) commutators and symmetrization
for two operators, say $A$ and $B$
\begin{align*}
\big[A,B\big]:=A\circ B-B^{\ast}\circ A^{\ast}\qquad ,\qquad
\big\{A,B\big\}:=A\circ B+B^{T}\circ A^{T}
\end{align*}
where recall $A\circ B$ means,
\begin{align*}
\big(A\circ B\big)(x_{1},x_{2}):=
\int dy\left\{A(x_{1},y)B(y,x_{2})\right\}\ .
\end{align*}
Also, denote
\begin{align*}
&\Sop=\frac{1}{i}\partial_{t} -\Delta_{x_1}  -\Delta_{x_2}\\
&\Wop=\frac{1}{i}\partial_{t} -\Delta_{x_1}  +\Delta_{x_2}
\end{align*}
With this convention we can write the equations as follows,
\begin{align}
\left\{\Sop +\frac{1}{N}v_{N}\right\}\Lambda
&+\left\{v_{N}\ast{\rm diag}\overline{\Gamma} ,\Lambda\right\}
+\left\{v_{N}\overline{\Gamma},\Lambda\right\}
+\left\{v_{N}\Lambda,\Gamma\right\}
\nonumber
\\
&=\left\{v_{N}\phi\overline{\phi},\phi\phi\right\}
+\left\{v_{N}\phi\phi,\overline{\phi}\phi\right\}
\label{b5-lambda1}
\end{align}
Notice that $\Gamma^{T}=\overline{\Gamma}$ and $\Lambda^{T}=\Lambda$,
and $v_{N}\Lambda$ means pointwise multiplication i.e.
\begin{align*}
&\big(v_{N}\Lambda\big)(x_{1},x_{2}):=v_{N}(x_{1}-x_{2})\Lambda(x_{1},x_{2})
\\
&\big( v_{N}\ast{\rm diag}\Gamma\big)(x_{1},x_{2}) :=
\left(\int dy\left\{v_{N}(x_{1}-y)\Gamma(y,y)\right\}\right)\delta(x_{1}-x_{2})
\ .
\end{align*}
To see why \ref{b5-lambda1} is correct notice that
\begin{align*}
\big\{v_{N}\Lambda ,\Gamma\big\}&=
(v_{N}\Lambda)\circ \Gamma +\Gamma^{T}\circ(v_{N}\Lambda)^{T}
\\
&=\int dy\left\{v_{N}(x_{1}-y)\Lambda(x_{1},y)\Gamma(y,x_{2})
+\Gamma^{T}(x_{1},y)v_{N}(y-x_{2})\Lambda(y,x_{2})\right\}\ .
\end{align*}

The equation for $\Gamma$ reads,
\begin{align}
\Wop\Gamma &+\big[v_{N}\ast{\rm diag}\Gamma,\Gamma\big]+
\big[v_{N}\overline{\Lambda},\Lambda\big]+
\big[v_{N}\Gamma,\Gamma\big]
\nonumber
\\
&=\big[v_{N}\overline{\phi\phi},\phi\phi\big]+
\big[v_{N}\overline{\phi}\phi,\overline{\phi}\phi\big]\ .
\label{b5-gamma1}
\end{align}
Notice that $\Lambda^{\ast}=\overline{\Lambda}$ and $\Gamma^{\ast}=\Gamma$
in computing the commutators and it easy to check that it is indeed correct.

\section{Statement of the main theorem}

We use the standard notation
\begin{align*}
\|F\|_{L^p(dt)L^q(dx) L^2(dy)}=
\bigg\|\big\|F\|_{L^2(dy)}\big\|_{L^{q}(dx)}\bigg\|_{L^{p}(dt)}
\end{align*}

Fix $\alpha>\frac{1}{2}$,  chosen so that $2 \alpha \beta <1$. (This is so that, roughly speaking, $|\nabla|^{2 \alpha} \frac{1}{N} v_N$ is less singular than the delta function.)

Let $0<T<1$ and $c(t)$ the characteristic function of $[0, T]$. Define the norms

  of $\Lambda(t, x, y)$, $\Gamma(t, x, y)$ and $\phi(t, x)$:
\begin{align*}
 N_T(\Lambda)=&\|<\nabla_x>^{\alpha}<\nabla_y>^{\alpha} c(t)\Lambda\|_{L^{2}(dt) L^{6}( dx) L^2(dy)}\\
 &+\|<\nabla_x>^{\alpha}<\nabla_y>^{\alpha}c(t)\Lambda\|_{L^{\infty}(dt) L^{2}( dx) L^2(dy)}\\
&+\mbox{same norm with $x$ and $y$ reversed}\\
&+\sup_w\|<\nabla>^{\alpha} c(t)\Lambda (t, x, x+w)\|_{L^2(dt dx)}\\
&+\sup_w\|\big|\partial_t\big|^{\frac{1}{4}} c(t)\Lambda (t, x, x+w)\|_{L^2(dt dx)}\\
\end{align*}
and, for $\Gamma(t, x, y)$,
\begin{align*}
\dot N_T(\Gamma)= &\|<\nabla_x>^{\alpha}<\nabla_y>^{\alpha} c(t)\Gamma\|_{L^{2}(dt) L^{6}( dx) L^2(dy)}\\
&+\mbox{same norms with $x$ and $y$ reversed}\\
&\|<\nabla_x>^{\alpha}<\nabla_y>^{\alpha} c(t)\Gamma\|_{L^{\infty}(dt) L^{2}( dx) L^2(dy)}\\
&+\sup_w\|<\nabla>^{\alpha-\frac{1}{2}}|\nabla|^{\frac{1}{2}} c(t)\Gamma (t, x, x+w)\|_{L^2(dt dx)}
\end{align*}
and Strichartz norms for $\phi$
\begin{align*}
 N_T(\phi)=&\|<\nabla_x>^{\alpha} c(t)\phi\|_{L^{2}(dt) L^{6}( dx)}
 +\|<\nabla_x>^{\alpha} c(t)\phi\|_{L^{\infty}(dt) L^{2}( dx)}
\end{align*}
\begin{remark} The dot in $\dot N_T(\Gamma)$ is meant to remind the reader that these are not inhomogeneos Sobolev norms.
\end{remark}
\begin{remark} For any Strichartz admissible pair $p, q$ ($\frac{2}{p}+\frac{3}{q}=\frac{3}{2}$) with $2 \le p \le \infty$,
\begin{align*}
\|<\nabla_x>^{\alpha}<\nabla_y>^{\alpha}\Lambda\|_{L^{p}([0, T]) L^{q}( dx) L^2(dy)} \lesssim N_T(\Lambda)
\end{align*}
This can be seen by interpolation. The same comment applies to $\dot N_T(\Gamma)$ and  $N_T(\phi)$.
\end{remark}

Our main (nonlinear) theorem is the following:
\begin{theorem}\label{mainnonlinthm}
Assume $v$ is a Schwartz function with $\hat v$  supported in the unit ball, such that $ | \hat v| \le \hat w$ with $w$ a Schwartz function.   Let $\beta < 1$, and fix $\alpha> \frac{1}{2}$ so that $2 \alpha \beta <1$. Let $\Lambda$, $\Gamma$ and $\phi$ be solutions to \ref{evphi},  \ref{evLambda2}
and  \ref{evGamma2}. Then there exists, $N_0\in \mathbb N$ and $\epsilon >0$ such that, if $0<T<1$ and $N \ge N_0$,
\begin{align*}
&N_T\left(\Lambda\right)
\lesssim
\|<\nabla_x>^{\alpha}<\nabla_y>^{\alpha}\Lambda(0, \cdot)\|_{L^2}\\
&+ T^{\epsilon} \bigg( N_T\left(\Lambda\right)
 \dot N_T\left(\Gamma\right)+ N^4_T\left(\phi\right)\bigg)
\end{align*}
\begin{align*}
&\dot N_T\left(\Gamma\right)
\lesssim
\|<\nabla_x>^{\alpha}<\nabla_y>^{\alpha}\Gamma(0, \cdot)\|_{L^2}\\
&+ T^{\epsilon} \bigg( N^2_T\left(\Lambda\right)+
\dot N^2_T\left(\Gamma\right)+ N^4_T\left(\phi\right)\bigg)
\end{align*}

\begin{align*}
&N_T\left(\phi\right)
\lesssim
\|<\nabla_x>^{\alpha}\phi(0, \cdot)\|_{L^2}\\
&+  T^{\epsilon} \bigg(  N_T\left(\Lambda\right) +  \dot N_T\left(\Gamma\right)
+N^2_T\left(\phi\right)\bigg)N_T\left(\phi\right)
\end{align*}
\end{theorem}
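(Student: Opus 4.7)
The three bounds are coupled a priori estimates, and my plan is to derive each by combining Duhamel's formula with Strichartz and ``collapsing''-type linear estimates, extracting the gain $T^\epsilon$ by H\"older in time. Write $S(t)$ for the free Schr\"odinger evolution on $\R^3$, $\R^6$, and $\R^3\times\R^3$ with reversed sign in the second factor, corresponding to the three linear operators appearing on the left of \ref{evphi}--\ref{evGamma2}. Applying Duhamel and standard Strichartz controls the homogeneous parts by the initial-data terms on the right-hand sides of the claimed inequalities. The singular factor $\tfrac{1}{N} v_N(x_1-x_2)\Lambda$ on the left of \ref{evLambda2} I would move to the right-hand side; its contribution is uniformly bounded in $N$ precisely because $2\alpha\beta<1$ guarantees that $\tfrac{1}{N}\langle\nabla\rangle^{2\alpha} v_N$ is tame on the relevant scales.

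The core of the argument is then to estimate each nonlinear term on the right-hand side by a product of two of the norms $N_T(\Lambda)$, $\dot N_T(\Gamma)$, $N_T(\phi)$ (times, in some cases, an additional factor of $N_T(\phi)$). Every such term has schematic form $\int v_N(x_i-y)\,F(x_i,y)\,G(y,x_j)\,dy$, a trace variant $(v_N\ast \dig\Gamma)(x)\cdot G(x,y)$, or a pure cubic source in $\phi$. For each, I would pair one factor (the one carrying $v_N$) with a bound supplied by the collapsing norms $\sup_w\|\langle\nabla\rangle^\alpha c(t)\Lambda(t,x,x+w)\|_{L^2(dt\,dx)}$ and its analogue for $\Gamma$ built into $N_T(\Lambda)$ and $\dot N_T(\Gamma)$, and pair the other factor with a Strichartz norm. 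This is exactly the role of the main linear estimates promised in Section 5: they translate the concentration of $v_N$ at scale $N^{-\beta}$ into a uniform-in-$N$ bound on the restriction of the relevant product to the near-diagonal $x=y+w$. The factor $T^\epsilon$ is then produced by taking one Strichartz exponent with strict inequality and applying H\"older in $t$ on $[0,T]$.

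The trilinear bookkeeping I expect is as follows. For \ref{b5-lambda1}, terms like $\{v_N\Lambda,\Gamma\}$ and $\{v_N\overline\Gamma,\Lambda\}$ pair a collapsing norm of $\Lambda$ against a Strichartz norm of $\Gamma$, producing $N_T(\Lambda)\dot N_T(\Gamma)$, and the $\phi\bar\phi\phi\phi$-type source gives $N_T(\phi)^4$. For \ref{b5-gamma1}, the commutators $[v_N\overline\Lambda,\Lambda]$, $[v_N\Gamma,\Gamma]$, and $[v_N\ast \dig\Gamma,\Gamma]$ give $N_T(\Lambda)^2+\dot N_T(\Gamma)^2$, and the $\phi$-source again yields $N_T(\phi)^4$. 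For \ref{evphi}, the right-hand side is $\phi$ multiplied by a factor built from $\Gamma$, $\Lambda$, $\phi\bar\phi$, so the same Strichartz/collapsing pairing assembles to $N_T(\phi)\bigl(N_T(\Lambda)+\dot N_T(\Gamma)+N_T(\phi)^2\bigr)$, matching the third inequality.

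The main obstacle I foresee is the proof of the collapsing estimates uniformly in $N$ when $\beta$ is close to $1$. The regime $\beta\geq 2/3$ is precisely what is new here, and in it one cannot simply combine local smoothing for the $6$-dimensional Schr\"odinger flow with Sobolev embedding to bring $v_N$ under control. I expect the actual argument to hinge on a careful $X^{s,b}$ analysis along the lines flagged by Section 4, with the temporal-smoothing component $\sup_w\||\partial_t|^{1/4}c(t)\Lambda(t,x,x+w)\|_{L^2(dt\,dx)}$ of $N_T(\Lambda)$ compensating for the missing $1/2$ derivative not available from Sobolev alone. Once these linear estimates are in hand, the assembly into the three coupled nonlinear inequalities is routine.
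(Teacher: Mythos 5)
Your bookkeeping for the nonlinear terms is essentially the paper's: H\"older in time to extract $T^{\epsilon}$, dual estimates of the type \ref{origTataru}, and the pairing of a collapsing norm of one factor with a Strichartz norm of the other, with the pure $\phi$ sources giving $N_T^4(\phi)$. But there is a genuine gap at the crux of the matter, namely your treatment of the potential term $\frac{1}{N}v_N(x_1-x_2)\Lambda$ in \ref{evLambda2}. You propose to move it to the right-hand side and assert that it is uniformly controlled because $2\alpha\beta<1$ makes $\frac{1}{N}\langle\nabla\rangle^{2\alpha}v_N$ ``tame.'' This is exactly what fails in the regime $2/3\le\beta<1$ that the paper is about (see Remark 3.5): after distributing $\langle\nabla_x\rangle^{\alpha}\langle\nabla_y\rangle^{\alpha}$, the worst contribution is $\frac{1}{N}\big(\langle\nabla\rangle^{2\alpha}v_N\big)(x-y)\,\Lambda(x,y)$, a term concentrated on the diagonal at scale $N^{-\beta}$. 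Its smallness in $L^1(d(x-y))$ (which is what $2\alpha\beta<1$ buys) cannot be used by the linear theory of Proposition \ref{oldprop}: to place it in $X^{-\frac12+}$ one needs, e.g., $L^{2-}(dt)L^{\frac65+}(d(x-y))L^2(d(x+y))$ or $L^{1+}(dt)L^2(dx\,dy)$, and the relevant norms of $\frac{1}{N}\langle\nabla\rangle^{2\alpha}v_N$ grow like $N^{2\alpha\beta+\beta-1}$ and $N^{2\alpha\beta+\frac{3\beta}{2}-1}$ respectively, which diverge for $\beta$ close to $1$; pairing instead with the collapsing norm of $\Lambda$ does not help, since the collapsing norm carries only one factor of $\langle\nabla\rangle^{\alpha}$, not $\langle\nabla\rangle^{2\alpha}$. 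In the limit this term behaves like $N^{2\alpha\beta-1}\delta(x-y)\Lambda(x,x)$, which is not in any dual Strichartz space uniformly in $N$, so the perturbative scheme does not close.

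The paper's resolution is precisely to keep the potential inside the linear operator and prove the new linear estimate, Theorem \ref{mnthm} (long version Theorem \ref{mnthmlong}): the Duhamel term generated by $\frac1N v_N\Lambda$ is analyzed by splitting frequencies relative to $N^{\beta'}$, iterating a gain of $N^{-\epsilon}$ at high frequencies $|\xi\pm\eta|\gtrsim N^{\beta'}$, and at low frequencies exploiting the diagonal structure through special estimates for sources of the form $\delta(x-y-z)f(t,x+y)$, which is where the $|\partial_t|^{1/4}$ collapsing component of $N_T(\Lambda)$ and the $X^{\frac14+}$ analysis enter. This has a concrete consequence for the nonlinear step that your proposal omits: because Theorem \ref{mnthm} requires, besides the $X^{-\frac12+}$ norm, the asymmetric norm $\|\langle\nabla_{x_1}\rangle^{\alpha-\frac12}\langle\nabla_{x_2}\rangle^{\alpha}(\cdot)\|_{X^{-\frac14-}}$ of the forcing, the proof of the first inequality of the theorem must also estimate the terms of \ref{evLambda2} containing $v_N(x_1-y)$ in that norm (via \ref{midTataru} and an $L^{\frac43}(dt)L^2$ bound), not merely by the collapsing/Strichartz pairing. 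Your closing paragraph correctly senses that something beyond local smoothing plus Sobolev is needed and that $|\partial_t|^{1/4}$ should play a role, but you locate the difficulty in the collapsing estimates for the free flow, while your actual plan for the potential term is the perturbative one that the paper shows is unavailable for $\beta\ge 2/3$; as written, the argument would not be uniform in $N$.
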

As an immediate consequence, we get
\begin{corollary}
There exists $T_0 >0$ such that,
if $0\le T \le T_0$,
\begin{align*}
&N_T\left(\Lambda\right)+\dot N_T\left(\Gamma\right)+N_T\left(\phi\right)\\
&\lesssim
\|<\nabla_x>^{\alpha}<\nabla_y>^{\alpha}\Lambda(0, \cdot)\|_{L^2}+
\|<\nabla_x>^{\alpha}<\nabla_y>^{\alpha}\Gamma(0, \cdot)\|_{L^2}\\
&+
\|<\nabla_x>^{\alpha}\phi(0, \cdot)\|_{L^2}
\end{align*}
\end{corollary}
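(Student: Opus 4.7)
The plan is to close the three inequalities of Theorem~\ref{mainnonlinthm} by a standard continuity/bootstrap argument. Introduce
\begin{align*}
X(T) &:= N_T(\Lambda) + \dot N_T(\Gamma) + N_T(\phi),\\
D &:= \|\langle\nabla_x\rangle^{\alpha}\langle\nabla_y\rangle^{\alpha}\Lambda(0,\cdot)\|_{L^2}
+ \|\langle\nabla_x\rangle^{\alpha}\langle\nabla_y\rangle^{\alpha}\Gamma(0,\cdot)\|_{L^2}
+ \|\langle\nabla_x\rangle^{\alpha}\phi(0,\cdot)\|_{L^2}.
\end{align*}
Summing the three inequalities of Theorem~\ref{mainnonlinthm} and majorizing each of the mixed terms $N_T(\Lambda)\dot N_T(\Gamma)$, $\dot N_T^{2}(\Gamma)$, $(N_T(\Lambda)+\dot N_T(\Gamma)+N_T^{2}(\phi))N_T(\phi)$, etc., by a polynomial in $X(T)$, one arrives at
\[
X(T) \le C_1 D + C_1 T^{\epsilon}\, Q\bigl(X(T)\bigr), \qquad Q(y) = C\bigl(y^2 + y^3 + y^4\bigr),
\]
for universal constants $C, C_1$. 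The essential structural feature is that $Q(0)=0$, so the nonlinear remainder is genuinely small whenever $T$ is small and $X(T)$ stays bounded.

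Second, I claim that $T\mapsto X(T)$ is nondecreasing and continuous on the interval of existence. Monotonicity is immediate because each norm entering $N_T$ and $\dot N_T$ is built with the cutoff $c(t)=\mathbf 1_{[0,T]}$. Continuity follows from the fact that the solutions $\Lambda, \Gamma, \phi$ produced by the fixed-point scheme underlying Theorem~\ref{mainnonlinthm} belong to $C^{0}_{t}$ of the relevant Sobolev spaces, so the truncated $L^p_t L^q_x$, $L^\infty_t L^2_x$, and $\sup_w$-trace pieces all vary continuously with the endpoint $T$. In particular $X(0^{+}) \le C_2 D$: the $L^p_t$ pieces with $p<\infty$ vanish as $T\downarrow 0$, while the $L^\infty_t L^2_x$ and $\sup_w$ trace pieces are controlled at $t=0$ by the initial-data norms appearing in $D$.

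The bootstrap is now routine. Fix $M := 2(C_1 + C_2)D$ and choose $T_0 = T_0(D) > 0$ small enough that $C_1 T_0^{\epsilon} Q(M) \le C_1 D$. Let $I := \{T \in [0, T_0] : X(T) \le M\}$. By the previous paragraph $I$ contains a right-neighbourhood of $0$, and for any $T \in I$ the displayed inequality yields $X(T) \le C_1 D + C_1 T_0^{\epsilon} Q(M) \le 2 C_1 D < M$, a strict improvement. Thus $I$ is open in $[0, T_0]$ by the strict inequality and closed by continuity, hence $I = [0, T_0]$, and on this interval $X(T) \le 2 C_1 D \lesssim D$, which is precisely the corollary. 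The only delicate point of the scheme is verifying the continuity (and right-continuity at $T = 0$) of the $L^{\infty}_t$ and $\sup_w$ pieces of $N_T$ and $\dot N_T$; this reduces to $C^{0}_t$-regularity of the solutions and requires no estimate beyond those already contained in Theorem~\ref{mainnonlinthm}.
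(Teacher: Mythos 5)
Your continuity/bootstrap argument is correct and is essentially the standard absorption-for-small-$T$ reasoning the paper has in mind when it states the corollary as an ``immediate consequence'' of Theorem \ref{mainnonlinthm} (the paper supplies no further details). The one tacit hypothesis --- a priori finiteness and continuity in $T$ of the truncated norms, including the sharply cut-off $\big|\partial_t\big|^{\frac{1}{4}}$ and $\sup_w$ trace pieces --- is exactly the point you flag, and it is supplied by the fixed-point construction of the solutions in these norms, so there is no gap.
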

\begin{remark}
For the purpose of future applications, we note that
similar estimates hold for the derivatives which commute with the potential:
\begin{align}
&N_T\left(\partial_t   \nabla_{x+y} ^j\Lambda \right) +\dot N_T\left(\partial_t \nabla_{x+y} ^j\Gamma\right)
+N_T\left(\partial_t \nabla_{x} ^j\phi\right)
 \label{commute}\\
&\lesssim \notag
\|<\nabla_x>^{\alpha}<\nabla_y>^{\alpha}\partial_t \nabla_{x+y} ^j
\Lambda \bigg|_{t=0}\|_{L^2}\notag\\
&+\notag
\|<\nabla_x>^{\alpha}<\nabla_y>^{\alpha}
\partial_t \nabla_{x+y} ^j
\Gamma \bigg|_{t=0}\|_{L^2}\\
&+\notag
\|<\nabla_x>^{\alpha}
\partial_t\nabla_{x} ^j
\phi |_{t=0}\|_{L^2}
\end{align}
The time interval $T_0$ and the implicit constants in the above inequalities depend only on the following norm of the initial data:
\begin{align*}
&\|<\nabla_x>^{\alpha}<\nabla_y>^{\alpha}\Lambda(0, \cdot)\|_{L^2}+
&\|<\nabla_x>^{\alpha}<\nabla_y>^{\alpha}\Gamma(0, \cdot)\|_{L^2}\\
&+\|<\nabla_x>^{\alpha} \phi(0, \cdot)\|_{L^2}
\end{align*}
 Also, the theorem is valid for general functions $\phi$, $\Lambda$ and $\Gamma$, although, for the application which motivates this work, they satisfy a constraint (which is preserved by the evolution).
\end{remark}

\begin{remark}
While the above theorem looks similar to Theorem 6.1 in \cite{GM3}, the similarity is superficial. In \cite{GM3} we took $\beta<2/3$ in order to be able to treat $ \frac{1}{N}v_N$ as a perturbation in $X^{\frac{1}{2}}$ type spaces.
The reason for that was $<\nabla_x>^{\alpha}<\nabla_y>^{\alpha} \frac{1}{N}v_N(x-y) \in L^{6/5}$, and we were able to use the $L^2(dt) L^{6/5}(d(x-y))L^2(d(x+y))$ dual Strichartz space.
In the present case,
$\beta<1$, $\alpha > \frac{1}{2}$, this is not the case, since  $<\nabla_x>^{\alpha}<\nabla_y>^{\alpha} \frac{1}{N}v_N(x-y)$
is (almost) as singular as $\delta(x-y)$.

Roughly speaking, $\Lambda $ is the sum of some pieces for which
$<\nabla_x>^{\alpha}<\nabla_y>^{\alpha} \Lambda\in X^{\frac{1}{2}+}$ and one piece for which $<\nabla_x+\nabla_y>^{\alpha}<\nabla_x>^{\alpha}<\nabla_y>^{\alpha}\Lambda \in X^{\frac{1}{4}+}$,
 as well as $<\partial_t>^{\frac{1}{4}} \in X^{\frac{1}{4}+}$
 see the term $D_{z, 2}$ in Lemma \ref{1/4}. These estimates are sufficient in order to recover all Strichartz-type estimates.
\end{remark}
The proof of this theorem follows  from  estimates for solutions to linear equations.
In order to prove these, we have to recall the definition of the spaces $X^{\delta}=X^{s, \delta}$ with $s=0$. See \cite{taobook} for more regarding these.

Denote
\begin{align*}
&\S=\frac{1}{i}\partial_{t} -\Delta_x  -\Delta_y\, \, \\
&\mbox{or, depending on the dimensions,}\\
&\S=\frac{1}{i}\partial_{t} -\Delta_x  \, \, \\
&\S_{\pm}=\frac{1}{i}\partial_{t} -\Delta_x  +\Delta_y\, \, \\
\end{align*}
so that the symbol of $\S$ is  $\tau  +|\xi|^2  +|\eta|^2$
and the symbol of  $\S_{\pm}$ is  $\tau  +|\xi|^2  -|\eta|^2$.
 Let
\begin{align*}
&\|f\|_{X^{\delta}}=\|<\tau +|\xi|^2 +|\eta|^2>^{\delta} \hat f(\tau, \xi, \eta)\|_{L^2(d \tau d \xi d\eta)}\\
&\mbox{or, depending on the dimensions,}\\
&\|f\|_{X^{\delta}}=\|<\tau +|\xi|^2 >^{\delta} \hat f(\tau, \xi)\|_{L^2(d \tau d \xi )}\\
&\|f\|_{X_{\pm}^{\delta}}=\|<\tau +|\xi|^2 -|\eta|^2>^{\delta} \hat f(\tau, \xi, \eta)\|_{L^2(d \tau d \xi d\eta)}\,
\end{align*}
We note that the first application of $X^{\delta}$ spaces to the BBGKY hierarchy is in the work of X. Chen and J. Holmer
\cite{C-H1}.

We solve
\begin{align}
&\S_{\pm} \Gamma= F \label{diffeq}\\
&\Gamma(0)=\Gamma_0 \notag
\end{align}
in an interval$[0, T]$
by constructing
\begin{align*}
&  \Gamma_1=
  e^{ i t \Delta_{\pm}}\Gamma_0 \\
  & +
  \int_{-\infty}^{\infty}c(t-s)e^{ i (t-s) \Delta}c(s) F(s) ds
\end{align*}
where $c(t)$ is the characteristic function of $[0, T]$, $T \le 1$,
and noticing $c(t)\Gamma=c(t)\Gamma_1$ .

The following is known, it follows from Proposition 5.8 in \cite{GM3}, and Lemma \ref{energy}.
See also \cite{CHP} and
\cite{C-H1}.
\begin{proposition}\label{oldprop}
\begin{align*}
&\S_{\pm} \Gamma= F, \, \,
\Gamma(0)=\Gamma_0 \notag\\
&S \phi=f, \, \,
\phi(0)=\phi_0
\end{align*}
Then, for all $\delta>0$,
\begin{align}
 \dot N_T(\Gamma)
&\lesssim_{\delta} \|<\nabla_x>^{\alpha} <\nabla_y>^{\alpha} \Gamma_0\|_{L^2(dx dy)} \label{gammaest}
+ \|<\nabla_x>^{\alpha} <\nabla_y>^{\alpha} c(s)F\|_{X_{\pm}^{-\frac{1}{2}+\delta}}\notag\\
\end{align}
\begin{align*}
N_T(\phi)
&\lesssim_{\delta} \|<\nabla_x>^{\alpha}  \phi_0\|_{L^2(dx)}
+ \|<\nabla_x>^{\alpha}c(s) f\|_{X^{-\frac{1}{2}+\delta}}
\end{align*}
\end{proposition}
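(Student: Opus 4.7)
The plan is to reduce both inequalities to linear $X^{s,b}$ estimates on the whole line by exploiting the identity $c(t)\Gamma = c(t)\Gamma_1$ with
\begin{align*}
\Gamma_1(t) = e^{it\Delta_{\pm}}\Gamma_0 + \int_{-\infty}^{\infty} c(t-s) e^{i(t-s)\Delta_{\pm}} c(s) F(s)\, ds,
\end{align*}
and then to estimate each summand in $\dot N_T(\Gamma)$ (resp.\ $N_T(\phi)$) separately. Since the norms involved only see $\Gamma$ for $t\in[0,T]$, it is enough to bound the corresponding pieces of $\Gamma_1$. Because $\langle\nabla_x\rangle^{\alpha}\langle\nabla_y\rangle^{\alpha}$ commutes with $\S_{\pm}$, I may assume without loss of generality that $\alpha=0$ and put those derivatives back at the end.

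For the homogeneous piece $e^{it\Delta_{\pm}}\Gamma_0$, the operator factors as $e^{it\Delta_x}$ in $x$ and $e^{\mp it\Delta_y}$ in $y$, so the Strichartz estimate in the $x$ variable with $y$ frozen yields $\|e^{it\Delta_{\pm}}\Gamma_0\|_{L^2(dt)L^6(dx)L^2(dy)} + \|e^{it\Delta_{\pm}}\Gamma_0\|_{L^\infty(dt)L^2(dx)L^2(dy)} \lesssim \|\Gamma_0\|_{L^2}$, with the analogous bound after exchanging the roles of $x$ and $y$. The Duhamel piece is handled via the standard $X^{s,b}$ lemma (as in Tao's book, ch.~2): multiplication by $c(t)$ and the truncated propagator produces an element of $X_{\pm}^{1/2+\delta}$ bounded by $\|c(s) F\|_{X_{\pm}^{-1/2+\delta}}$, and the embeddings $X_{\pm}^{1/2+\delta}\hookrightarrow L^\infty_t L^2_{x,y}$ together with the Strichartz transfer principle give the two Strichartz components of $\dot N_T(\Gamma)$ at the correct regularity.

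The main obstacle is the collapsing term $\sup_w \|\langle\nabla\rangle^{\alpha-1/2}|\nabla|^{1/2} c(t)\Gamma(t,x,x+w)\|_{L^2(dt\,dx)}$, and for $N_T(\Lambda)$ its analogues with $|\partial_t|^{1/4}$ or $\langle\nabla\rangle^{\alpha}$. After Fourier transform in $(t,x,y)$ and the change of variables $\xi'=\xi+\eta$, the evaluation $y=x+w$ restricts to the $\eta$-line and couples with the dispersive weight $\tau+|\xi'-\eta|^2 \mp|\eta|^2$. The crucial observation is that for fixed $\xi'$ and $\tau$, this dispersive symbol is a nondegenerate function of $\eta$ (after a translation in $\eta$), so integrating out $\eta$ produces a trace-type bilinear estimate of Klainerman-Machedon type:
\begin{align*}
\sup_w \||\nabla|^{1/2} e^{it\Delta_{\pm}}\Gamma_0(t,x,x+w)\|_{L^2(dt\,dx)} \lesssim \|\Gamma_0\|_{L^2},
\end{align*}
for the homogeneous term, and its $X^{-1/2+\delta}$ counterpart for the Duhamel term. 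This is exactly the content of Proposition 5.8 of \cite{GM3} (and related to the estimates in \cite{C-H1}, \cite{CHP}), and the loss of a half-derivative in the weight $\langle\nabla\rangle^{\alpha-1/2}|\nabla|^{1/2}$ is precisely what the bilinear gain provides.

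The estimate for $\phi$ is easier and is the pure Strichartz / $X^{s,b}$ statement in one spatial variable: splitting $\phi=\phi_{\text{hom}}+\phi_{\text{Duh}}$, applying Strichartz to the first and the $X^{1/2+\delta}\hookrightarrow L^2_tL^6_x\cap L^\infty_tL^2_x$ embedding to the second, after reducing the inhomogeneity to $X^{-1/2+\delta}$ by the same cut-off lemma, closes the bound. At the end I commute $\langle\nabla_x\rangle^{\alpha}\langle\nabla_y\rangle^{\alpha}$ (or $\langle\nabla_x\rangle^{\alpha}$) through the linear construction to recover the stated inequalities. The only real work is the trace estimate of the previous paragraph; everything else is bookkeeping within the standard $X^{s,b}$ framework.
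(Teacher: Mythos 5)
Your overall route is the same as the paper's: the paper proves this proposition by quoting Proposition 5.8 of \cite{GM3} together with Lemma \ref{energy} (and \cite{C-H1}, \cite{CHP}), i.e.\ exactly the combination Duhamel representation $+$ mixed-norm Strichartz/transfer principle (Lemma \ref{3.1}) $+$ the Klainerman--Machedon collapsing estimate (Lemma \ref{spacetime}) that you describe. Two parts of your write-up, however, are not correct as stated, and they concern precisely the one component you yourself identify as ``the only real work.'' First, the displayed trace estimate
$\sup_w \||\nabla|^{1/2} e^{it\Delta_{\pm}}\Gamma_0(t,x,x+w)\|_{L^2(dt\,dx)} \lesssim \|\Gamma_0\|_{L^2}$
is false: by scaling (restriction to a $3$-dimensional diagonal plus $L^2$ in time) the data must carry half a derivative in \emph{each} variable, i.e.\ the correct statement is Lemma \ref{spacetime}, $\sup_z\||\nabla_x|^{1/2}\Lambda(t,x,x+z)\|_{L^2(dt\,dx)}\lesssim \||\nabla_x|^{1/2}|\nabla_y|^{1/2}\Lambda_0\|_{L^2}$, with the $X^{\frac12+\delta}$ variant \ref{higherlambdaX} used (after Lemma \ref{energy}) for the Duhamel/cutoff piece rather than a naive ``$X^{-1/2+\delta}$ counterpart'' of the free estimate.

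Second, and relatedly, the reduction ``WLOG $\alpha=0$'' is only legitimate for the Strichartz components of $\dot N_T(\Gamma)$, where $\langle\nabla_x\rangle^{\alpha}\langle\nabla_y\rangle^{\alpha}$ commutes with the propagator and the norm is taken before any restriction. For the collapsing component the weight $\langle\nabla\rangle^{\alpha-\frac12}|\nabla|^{\frac12}$ acts on the restricted function, i.e.\ it is a multiplier in $\xi+\eta$, and it does not factor through $\langle\xi\rangle^{\alpha}\langle\eta\rangle^{\alpha}$; the estimate genuinely uses $\alpha\ge\frac12$, so that the total of $\alpha$ derivatives on the trace plus the extra $\frac12$ demanded by the bilinear estimate can be absorbed into the $2\alpha$ derivatives available on the data (equivalently, one bounds $\langle\xi+\eta\rangle^{\alpha-\frac12}|\xi+\eta|^{\frac12}$ by products of $\langle\xi\rangle,\langle\eta\rangle$ weights and then applies \ref{higherlambda}/\ref{higherlambdaX}). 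With $\alpha=0$ the claimed inequality is simply false, which is also why the paper's $\dot N_T(\Gamma)$ carries the carefully chosen weight $\langle\nabla\rangle^{\alpha-\frac12}|\nabla|^{\frac12}$ rather than $\langle\nabla\rangle^{\alpha}$. Once these two points are repaired (quote Lemma \ref{spacetime}/Proposition 5.8 of \cite{GM3} with the correct right-hand side, and do the derivative bookkeeping for the diagonal term without the $\alpha=0$ reduction), your argument coincides with the paper's intended proof.
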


The main new contribution of our current paper is  a non-obvious modification of the above estimates for $N_T(\Lambda)$, for an equation which includes the potential $\frac{1}{N}v_N(x-y)$.
The "short" version\footnote{See Theorem \ref{mnthmlong} for the "long" version of the theorem} of our main linear theorem is

\begin{theorem}\label{mnthm}
Let $0<\beta<1$, and let
\begin{align}
&\S \Lambda= \frac{1}{N}v_N(x-y) \Lambda +F \label{diffeq}\\
&\Lambda(0)=\Lambda_0 \notag
\end{align}
Then for all $\delta >0$ sufficiently small, the following holds, uniformly in $N$.
\begin{align*}
&N_T(\Lambda)\lesssim_{\delta} \|<\nabla_x>^{\alpha} <\nabla_y>^{\alpha} \Lambda_0\|
+ \|<\nabla_x>^{\alpha} <\nabla_y>^{\alpha} c(t) F\|_{X^{-\frac{1}{2}+\delta}}\\
&+ \min \{ \|<\nabla_x>^{\alpha} <\nabla_y>^{\alpha - \frac{1}{2}} c(t) F\|_{X^{-\frac{1}{4}-\delta}}, \\
 &\|<\nabla_x>^{\alpha - \frac{1}{2}} <\nabla_y>^{\alpha} c(t) F\|_{X^{-\frac{1}{4}-\delta}} \} \
\end{align*}

\end{theorem}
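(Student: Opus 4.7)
The plan is to use Duhamel's formula to treat the potential term as a forcing on the right-hand side, then estimate each summand of $N_T(\Lambda)$ separately. Write
\begin{equation*}
c(t)\Lambda(t) = c(t)\,e^{it(\Delta_x+\Delta_y)}\Lambda_0 + c(t)\int_0^t e^{i(t-s)(\Delta_x+\Delta_y)}\bigl[F(s)+\frac{1}{N}v_N(x-y)\Lambda(s)\bigr]\,ds,
\end{equation*}
so the task reduces to bounding each component of $N_T(\Lambda)$ in terms of $\|\langle\nabla_x\rangle^{\alpha}\langle\nabla_y\rangle^{\alpha}\Lambda_0\|_{L^2}$ and the two source norms on the right-hand side of Theorem \ref{mnthm}. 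For the Strichartz pieces and the $L^\infty L^2$ piece (the first three terms of $N_T(\Lambda)$), the natural two-body analog of Proposition \ref{oldprop} applied to $\S$ handles everything from $\|\langle\nabla_x\rangle^{\alpha}\langle\nabla_y\rangle^{\alpha}c(t)F\|_{X^{-1/2+\delta}}$ by the standard $X^{s,b}$ Strichartz dualities reviewed in Section 4.

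The delicate content is to control the two "collapsed" diagonal norms $\sup_w\|\langle\nabla\rangle^{\alpha}c(t)\Lambda(t,x,x+w)\|_{L^2(dt\,dx)}$ and $\sup_w\||\partial_t|^{1/4}c(t)\Lambda(t,x,x+w)\|_{L^2(dt\,dx)}$, together with the contribution from the potential source. The key structural observation is that $v_N(x-y)$ is independent of $x+y$, so in Fourier its multiplier kernel only shifts $(\xi,\eta)\mapsto(\xi-\zeta,\eta+\zeta)$ with $|\zeta|\lesssim N^{\beta}$ (using that $\hat v$ is supported in the unit ball), preserving the center-of-mass variable $\xi+\eta$. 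Consequently, commuting $\langle\nabla_x\rangle^{\alpha}\langle\nabla_y\rangle^{\alpha}$ past $\frac{1}{N}v_N(x-y)$ costs at most a factor $N^{2\alpha\beta}$, which is precisely what the hypothesis $2\alpha\beta<1$ is built to absorb against the explicit $\frac{1}{N}$. This places $\frac{1}{N}v_N(x-y)\Lambda$ into the $X^{-1/4-\delta}$ class at the cost of a half-derivative loss in one spatial variable, which is exactly the form of the second (minimum) term in Theorem \ref{mnthm}.

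The trace-type estimates for the diagonal norms are supplied by an $X^{s,b}$ trace lemma (the anticipated Lemma \ref{1/4} with its $D_{z,2}$ decomposition). Rather than a single $X^{1/2+\delta}$ bound, this lemma offers two routes: the classical route from $X^{-1/2+\delta}$ with full $\langle\nabla_x\rangle^{\alpha}\langle\nabla_y\rangle^{\alpha}$ regularity, used on $F$ and on the "good" pieces of $\Lambda$; and a refined route from $X^{-1/4-\delta}$ with a half-derivative loss in one spatial variable, compensated by the extra $\nabla_{x+y}$ regularity that $\frac{1}{N}v_N(x-y)\Lambda$ automatically inherits from its structure. The $|\partial_t|^{1/4}$ diagonal norm is handled in parallel, by balancing the modulation $\langle\tau+|\xi|^2+|\eta|^2\rangle$ against $|\tau|^{1/4}$ via Plancherel and the frequency support of $\hat v_N$ in the $\zeta$ direction.

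The main obstacle is establishing the refined trace lemma at the $X^{1/4+\delta}$ scale with the half-derivative accounting exactly sharp: dropping from $b=1/2+$ to $b=1/4+$ forces compensation elsewhere, and the only admissible exchange is a loss of half a spatial derivative, which must be synchronized with the $N^{2\alpha\beta}<N$ Fourier budget of $\frac{1}{N}v_N(x-y)$ so that the estimate is uniform in $N$. Once this lemma is in hand, the Duhamel expansion closes by absorbing the potential contribution through the $\frac{1}{N}$ prefactor against the $N^{2\alpha\beta}$ Fourier cost, and all five norms comprising $N_T(\Lambda)$ are bounded as claimed.
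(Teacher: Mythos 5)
Your plan breaks at its central step: treating $\frac1N v_N(x-y)\Lambda$ as a Duhamel forcing that can be ``placed into the $X^{-\frac14-\delta}$ class at the cost of a half-derivative loss.'' That misreads the structure of Theorem \ref{mnthm}: the minimum term there is a norm of the external forcing $F$ only, and the potential is kept on the left precisely because it cannot be moved to the right in any of these dual norms uniformly in $N$. In the only problematic regime --- frequencies of $\Lambda$ below the frequency $N^{\beta}$ of the potential --- one has, heuristically, $\langle\nabla_x\rangle^{\alpha}\langle\nabla_y\rangle^{\alpha}\frac1N v_N\Lambda \sim N^{-\epsilon}\,\delta(x-y)\,\Lambda\big(t,\tfrac{x+y}2,\tfrac{x+y}2\big)$, and estimating such a diagonal-delta source in $X^{-\frac14-\delta}$ by duality would require a diagonal trace estimate at modulation regularity $b=\frac14+\delta$, which is not available: the collapsing estimates (Lemma \ref{spacetime}) need $b>\frac12$, and the whole content of the $D_{z,2}$ analysis in Lemma \ref{1/4} is that at $b\sim\frac14$ one must additionally exploit regularity in $x+y$ and in $t$, which a general right-hand side does not have. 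The smallness $N^{2\alpha\beta-1}$ of the commuted coefficient does not rescue this; a small prefactor is no substitute for an estimate in a space where the trace fails. In the paper the $X^{-\frac14-\delta}$, half-derivative-loss norm of $F$ enters for a different reason altogether: Proposition \ref{quartertime}, which controls $\sup_z\||\partial_t|^{1/4}$ of the $F$-Duhamel contribution restricted to the diagonal.

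What actually closes the argument, and what your proposal has no substitute for, is a self-consistent absorption with the enlarged norm $\N$ of Theorem \ref{mnthmlong}. The potential contribution $B$ is split at an auxiliary scale $N^{\beta'}$, $\beta<\beta'<1$, in the variables $\xi\pm\eta$. At high frequency at least one $\langle\nabla\rangle^{\alpha}$ must land on $\Lambda$ and one gains $\frac1N\|v_N\|_{L^{3/2+}}\sim N^{-\epsilon}$, but the output involves the slightly enlarged projection $P_{|\xi-\eta|>2^{I-1}}$, so the estimate must be iterated on the order of $\log_2 N^{\beta'-\beta}$ times (with $K\epsilon>1$) before it can be absorbed by the $N^{-1}$-weighted term of $\N$. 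At low frequency the Duhamel integral against the diagonal-delta model source is computed directly on the Fourier side, giving the gain $\frac{N^{\beta'}\log N^{\beta'}}{N}$, and --- this is the key structural point --- the result is bounded by the collapsed norms $\sup_z\|\langle\nabla\rangle^{\alpha}c(t)\Lambda(t,x,x+z)\|_{L^2}$ and $\sup_z\||\partial_t|^{1/4}c(t)\Lambda(t,x,x+z)\|_{L^2}$ of $\Lambda$ itself; that is exactly why these norms, in particular the quarter time derivative, are built into $N_T$ and $\N$. So the potential is not estimated by the data and $F$; it is absorbed against $\N(\Lambda)$ with an $N^{-\epsilon}$ gain. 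You correctly anticipate the $D_{z,2}$ obstruction and the role of $\nabla_{x+y}$ regularity, but without the $N^{\beta'}$ splitting, the iteration at high frequency, and the self-referential use of the diagonal norms, the scheme ``Duhamel once, commute derivatives, absorb $N^{2\alpha\beta}$ against $1/N$'' does not close uniformly in $N$.
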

\begin{remark} Note that the definition of $N_T(\Lambda)$ involves a quarter time derivative, but the right hand side above involves no time derivatives.
 This time
  derivative 
  will be crucial in the proof of Theorem \ref{mnthm}, as well as future applications of the theorem to Fock space estimates.
\end{remark}

 In preparation for the proof of Theorem \ref{mainnonlinthm}, we recall some standard inequalities. For completeness, a general statement and proof are included in Lemma \eqref{s<12}.

 We will freely use the following estimates:
\begin{align}
&\|F\|_{L^{2+}(dt)L^{6-}(dx) L^2(dy)}\lesssim \|F\|_{X^{\frac{1}{2}-}}\label{Main-}\\
&\mbox{i. e.  there exist numbers $\epsilon_1, \epsilon_2, \delta>0$,
 arbitralily small, such that}\notag\\
&\|F\|_{L^{2+\epsilon_1}(dt)L^{6-\epsilon_2}(dx) L^2(dy)}\lesssim_{\epsilon_1, \epsilon_2, \delta} \|F\|_{X^{\frac{1}{2}-\delta}}\notag\\
&\|F\|_{X^{-\frac{1}{2}+}} \lesssim \|F\|_{L^{2-}(dt)L^{\frac{6}{5}+}(dx) L^2(dy)}\label{dual}\\
&\|F\|_{L^p(dt)  L^2(dx dy)} \lesssim_s \|F\|_{X^{\frac{1}{2}-}} \, \, \mbox{for arbitrarily large $p$}\notag\\
&\|F\|_{X^{-\frac{1}{2}+}} \lesssim \|F\|_{L^{1+}(dt) L^2(dx  dy)} \label{origTataru}\\
&\|F\|_{X^{-\frac{1}{4}-}} \lesssim \|F\|_{L^{\frac{4}{3}-}(dt) L^2(dx  dy)} \label{midTataru}
\end{align}

Now we are ready to prove Theorem \ref{mainnonlinthm}.

\begin{proof} The proof of Theorem \ref{mainnonlinthm} using Proposition \ref{oldprop} and assuming Theorem \ref{mnthm} (which will be proved later in the paper) is straightforward.
First, we use \ref{gammaest} with $F=$ RHS of \, \ref{evGamma2}.
We show how to prove
 \begin{align*}
& \|<\nabla_{x_1}>^{\alpha}<\nabla_{x_2}>^{\alpha} \, c(t)(\mbox{RHS of} \, \ref{evGamma2})\|_{X_{\pm}^{-\frac{1}{2}+\delta}}\\
&\lesssim  T^{\epsilon} \bigg( N^2_T\left(\Lambda\right)+
\dot N^2_T\left(\Gamma\right)+ N^4_T\left(\phi\right)\bigg)
 \end{align*}
Consider  a typical term, such as the first term (where we set $x_1-y=z$):
\begin{align*}
&  \left\{\frac{1}{i}\partial_{t}-\Delta_{x_1}+\Delta_{x_2}\right\}\overline \Gamma(x_1, x_2) \\
&=-\int dz\left\{v_N(z)\Lambda(x_1,x_1-z)\overline\Lambda(x_1-z,x_{2})\right\} + \cdots
\end{align*}
Applying the fractional Leibniz rule in $X^{\delta}$ spaces, (which follows from a trivial pointwise estimate),
\begin{align*}
&\bigg\|<\nabla_{x_1}>^{\alpha}<\nabla_{x_2}>^{\alpha}c(t)\int dz\left\{v_N(z)\Lambda(x_1,x_1-z)\overline\Lambda(x_1-z,x_{2})\right\}\bigg\|_{X_{\pm}^{-\frac{1}{2}+\delta}}\\
&\lesssim
\int dz|v_N(z)|\bigg\|c(t)\left(<\nabla_{x_1}>^{\alpha}\Lambda(x_1,x_1-z)\right)<\nabla_{x_2}>^{\alpha}
\overline\Lambda(x_1-z,x_{2})\bigg\|_{X_{\pm}^{-\frac{1}{2}+\delta}}\\
&+\int dz|v_N(z)|\bigg\|c(t)\Lambda(x_1,x_1-z)<\nabla_{x_1}>^{\alpha}<\nabla_{x_2}>^{\alpha}\overline\Lambda(x_1-z,x_{2})
\bigg\|_{X_{\pm}^{-\frac{1}{2}+\delta}}\\
&\lesssim
 T^{\epsilon}\int dz|v_N(z)|\bigg\|\left(<\nabla_{x_1}>^{\alpha}\Lambda(x_1,x_1-z)\right)<\nabla_{x_2}>^{\alpha}
c(t) \overline\Lambda(x_1-z,x_{2})\bigg\|_{L^{1+\epsilon_1}(dt)L^2(dx_1 dx_2)}\\
&+ T^{\epsilon}\int dz|v_N(z)|\bigg\|\Lambda(x_1,x_1-z)<\nabla_{x_1}>^{\alpha}<\nabla_{x_2}>^{\alpha}c(t)\overline\Lambda(x_1-z,x_{2})
\bigg\|_{L^{1+\epsilon_1}(dt)L^2(dx_1 dx_2)}\\
&\mbox{(we used H\"older in time, and formula \ref{origTataru}}\\
&\lesssim T^{\epsilon}
\left(\int|v_N(z)|\right) \sup_z\bigg\|<\nabla_{x_1}>^{\alpha}c(t)\Lambda(x_1,x_1-z)\bigg\|_{L^2(dtdx_1))}\\
&\bigg\|<\nabla_{x_2}>^{\alpha}
c(t)\overline\Lambda(x_1-z,x_{2})\bigg\|_{L^{2+\epsilon_2}(dt)L^{\infty}(dx_1)L^2( dx_2)}\\
&+ T^{\epsilon}\left(\int |v_N(z)|\right)\sup_z\bigg\|c(t)\Lambda(x_1,x_1-z)\bigg\|_{L^2(dt)L^{3+\epsilon_3}(dx_1)} \\ &\bigg\|<\nabla_{x_1}>^{\alpha}<\nabla_{x_2}>^{\alpha}c(t)\overline\Lambda(x_1-z,x_{2})
\bigg\|_{L^{2+\epsilon_2}(dt)L^{6-\epsilon_4}(dx_1)L^2( dx_2)}\\
& \lesssim T^{\epsilon} N_T(\Lambda)^2 \, \, \mbox{(we used Sobolev)}
\end{align*}
provided $\delta$ and all epsilons are sufficiently close to $0$. We do not have to keep track how the various small numbers $\delta, \epsilon_i$ are related, provided they are all much smaller that $\alpha-\frac{1}{2}$, so that the Sobolev embeddings
 $W^{\alpha, \,  6-\epsilon_4} \subset L^{\infty}$ and
 $H^{\alpha} \subset L^{3+\epsilon_3}$ are valid in $\mathbb R^3$.

Estimates for
 \begin{align*}
& \|<\nabla_{x_1}>^{\alpha}<\nabla_{x_2}>^{\alpha} \, (\mbox{RHS of} \, \ref{evLambda2})\|_{X^{-\frac{1}{2}+\delta}}
 \mbox{
 and}\\
&\|<\nabla_{x}>^{\alpha} \, (\mbox{RHS of} \, \ref{evphi})\|_{X^{-\frac{1}{2}+\delta}}
 \end{align*}
 (which are needed for $N_T(\Lambda)$ and $N_T(\phi)$) are similar.

In order to complete the proof for $N_T(\Lambda)$ we also need
 to estimate the  $X^{-\frac{1}{4}-\delta}$ norm of
$<\nabla_{x_1}>^{\alpha-\frac{1}{2}}<\nabla_{x_2}>^{\alpha}$ applied to those terms on the right hand side of \ref{evLambda2} involving $v_N(x_1-y)$. A similar argument holds for
 $X^{-\frac{1}{4}-\delta}$ norm of  $<\nabla_{x_2}>^{\alpha-\frac{1}{2}}<\nabla_{x_1}>^{\alpha}$ applied to those terms on the right hand side of \ref{evLambda2} involving $v_N(x_2-y)$.
 Recall
 \begin{align*}
 &   \left\{\frac{1}{i}\partial_{t}-\Delta_{x_1}-\Delta_{x_2}+\frac{1}{N}v_N(x_{1}-x_{2})\right\}\Lambda(x_1, x_2) \\
& =-\int dy \left\{v_N(x_{1}-y)\Gamma(y,y) + v_N(x_{2}-y)\Gamma(y,y)\right\}\Lambda(x_1, x_2) + \cdots
\end{align*}
We estimate (with suitably chosen $\delta$, $\epsilon >0$),
\begin{align*}
&\bigg\|<\nabla_{x_1}>^{\alpha-\frac{1}{2}}<\nabla_{x_2}>^{\alpha}\left(\int  v_N(x_{1}-y)\Gamma(y,y) dy\right)c(t)\Lambda(x_1, x_2)
\bigg\|_{X^{-\frac{1}{4}-\delta}}\\
&\lesssim \bigg\|\left(\int v_N(x_{1}-y)<\nabla_{y}>^{\alpha-\frac{1}{2}}\Gamma(y,y) dy \right)c(t)<\nabla_{x_2}>^{\alpha}\Lambda(x_1, x_2)
\bigg\|_{X^{-\frac{1}{4}-\delta}}\\
&+\bigg\|\int v_N(x_{1}-y)\Gamma(y,y) dy c(t)<\nabla_{x_1}>^{\alpha-\frac{1}{2}}<\nabla_{x_2}>^{\alpha}\Lambda(x_1, x_2)
\bigg\|_{X^{-\frac{1}{4}-\delta}}\\
&\lesssim  T^{\epsilon} \bigg\|\left(\int v_N(x_{1}-y)<\nabla_{y}>^{\alpha-\frac{1}{2}}\Gamma(y,y) dy \right)c(t)<\nabla_{x_2}>^{\alpha}\Lambda(x_1, x_2)
\bigg\|_{L^{\frac{4}{3}}(dt) L^2(dxdy)}\\
&+  T^{\epsilon}\bigg\|\int v_N(x_{1}-y)\Gamma(y,y) dy c(t)<\nabla_{x_1}>^{\alpha-\frac{1}{2}}<\nabla_{x_2}>^{\alpha}\Lambda(x_1, x_2)
\bigg\|_{L^{\frac{4}{3}}(dt) L^2(dxdy)}\\
&\mbox{(we used H\"older in time, and \ref{midTataru} below)}\\
&\lesssim T^{\epsilon}\|<\nabla_{x_1}>^{\alpha-\frac{1}{2}}\Gamma(x_1, x_1)\|_{L^2(dt)L^3(dx_1)}\|<\nabla_{x_2}>^{\alpha}\Lambda(x_1, x_2)\|_{L^{4}([0, T])L^{6}(dx_1)L^2 (dx_2)} \\
&+T^{\epsilon}\|\Gamma(x_1, x_1)\|_{L^2(dt)L^{3}(dx_1)}\|<\nabla_{x_1}>^{\alpha-\frac{1}{2}}<\nabla_{x_2}>^{\alpha}\Lambda(x_1, x_2)\|_{L^4([0, T])L^{6}(dx_1)L^2 (dx_2)}\\
&\lesssim T^{\epsilon} N(\Lambda)N(\Gamma)
\end{align*}
We have used Sobolev estimates such as
\begin{align*}
  &  \|<\nabla_{x_1}>^{\alpha-\frac{1}{2}}<\nabla_{x_2}>^{\alpha}\Lambda(x_1, x_2)\|_{
    L^4([0, T])L^6(dx_1)L^2(dx_2)}\\
  &\lesssim
  \|<\nabla_{x_1}>^{\alpha}<\nabla_{x_2}>^{\alpha}\Lambda(x_1, x_2)\|_{L^4([0, T])L^3(dx_1)L^2(dx_2)}
 \lesssim N_T(\Lambda) \end{align*}

Estimates for all other terms are similar.

\end{proof}

\section{Standard estimates}

We summarize some basic estimates. The following is standard, it was used (and also proved) in \cite{GM3}.
It was inspired by the estimates in \cite{C-H1}, and is
true in both $X^{\delta}$ spaces, and $X_{\pm}^{\delta}$ spaces.
\begin{lemma}\label{3.1}
Let ${\delta}> \frac{1}{2}$ and $p, q \ge 2$ be Strichartz admissible ($\frac{2}{p}+\frac{3}{q}=\frac{3}{2}$). Then
\begin{align}
 &\|F\|_{L^p(dt) L^q(dx) L^2 (d y)} \lesssim  \|F\|_{X^{\delta}} \label{XXuwenest}\\
 &\mbox{and the dual estimate is}\\
 & \|F\|_{X^{-\delta}}  \lesssim \|F\|_{L^{p'}(dt) L^{q'}(dx) L^2 (d y)} \label{XXuwenestdual}
 \end{align}
 \end{lemma}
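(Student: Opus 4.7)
The plan is to reduce the $X^\delta$-bound to the standard $L^2$-based Strichartz estimate for the free Schr\"odinger group via a Fourier slicing argument (the "transfer principle" for $X^{s,b}$ spaces, e.g.\ as in Tao's book). Throughout, I treat $X^\delta$; the proof for $X^\delta_\pm$ is identical after replacing $e^{it\Delta_y}$ by $e^{-it\Delta_y}$, which is still unitary on $L^2_y$.

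First I would establish the "vector-valued" Strichartz estimate
\begin{align*}
\|e^{it(\Delta_x+\Delta_y)}h\|_{L^p(dt)L^q(dx)L^2(dy)} \lesssim \|h\|_{L^2(dx\,dy)}.
\end{align*}
Since $e^{it\Delta_y}$ is unitary on $L^2(dy)$, for each fixed $t$ one has $\|e^{it(\Delta_x+\Delta_y)}h(\cdot,\cdot)\|_{L^2(dy)} = \|e^{it\Delta_x}h(\cdot,\cdot)\|_{L^2(dy)}$. Using Minkowski's inequality (valid because $p,q\ge 2$) to exchange $L^p_tL^q_x$ with $L^2_y$, and then the scalar Strichartz estimate in $x$ for each fixed $y$, gives the claim.

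Next I slice $F$ by the value $\sigma = \tau + |\xi|^2 + |\eta|^2$ of the modulation. Writing the Fourier representation of $F$ and changing variables,
\begin{align*}
F(t,x,y) = \int e^{i\sigma t}\,\bigl(e^{it(\Delta_x+\Delta_y)} h_\sigma\bigr)(x,y)\,d\sigma,
\end{align*}
where $\widehat{h_\sigma}(\xi,\eta) := \widetilde{F}(\sigma-|\xi|^2-|\eta|^2,\xi,\eta)$ and $\widetilde F$ is the space-time Fourier transform. Applying the triangle inequality in $\sigma$, followed by the vector-valued Strichartz estimate above pointwise in $\sigma$ (the factor $e^{i\sigma t}$ has modulus one and does not affect the $L^p_tL^q_xL^2_y$ norm), gives
\begin{align*}
\|F\|_{L^p(dt)L^q(dx)L^2(dy)} \lesssim \int \|h_\sigma\|_{L^2(dx\,dy)}\,d\sigma.
\end{align*}
A Cauchy--Schwarz in $\sigma$ with the weight $\langle\sigma\rangle^{-\delta}$ yields
\begin{align*}
\int \|h_\sigma\|_{L^2}\,d\sigma \le \Bigl(\int \langle\sigma\rangle^{-2\delta}\,d\sigma\Bigr)^{1/2}\Bigl(\int \langle\sigma\rangle^{2\delta}\|h_\sigma\|_{L^2}^2\,d\sigma\Bigr)^{1/2}.
\end{align*}
The first factor is finite precisely because $\delta>1/2$ (this is the only place the hypothesis is used), while by Plancherel and the change of variables $\sigma\mapsto\tau+|\xi|^2+|\eta|^2$, the second factor equals $\|F\|_{X^\delta}$. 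This proves \eqref{XXuwenest}. The main (and only) obstacle is really the Fourier-theoretic bookkeeping in the slicing step; there is no hard analysis beyond the standard Strichartz estimate.

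Finally, the dual estimate \eqref{XXuwenestdual} follows by duality. The Fourier characterization of $X^\delta$ identifies $(X^\delta)^*$ isometrically with $X^{-\delta}$ under the $L^2(dt\,dx\,dy)$ pairing, and $(L^p(dt)L^q(dx)L^2(dy))^* = L^{p'}(dt)L^{q'}(dx)L^2(dy)$. Pairing with a test function and applying \eqref{XXuwenest} to it gives \eqref{XXuwenestdual} immediately.
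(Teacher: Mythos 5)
Your proof is correct. The vector-valued Strichartz bound (unitarity of $e^{it\Delta_y}$ on $L^2(dy)$ plus Minkowski, which is where $p,q\ge 2$ is used), the foliation of $F$ over the modulation $\sigma=\tau+|\xi|^2+|\eta|^2$, Cauchy--Schwarz in $\sigma$ (the only place $\delta>\frac12$ enters), and the duality step are all sound, and the same argument indeed covers $X^{\delta}_{\pm}$ since $e^{-it\Delta_y}$ is still unitary. For comparison: the paper does not prove Lemma \ref{3.1} here at all --- it is quoted as standard, with the proof deferred to \cite{GM3} --- and your transfer-principle argument is exactly that standard proof; the only related proof written out in this paper is for Lemma \ref{s<12} (the range $\delta<\frac12$), which goes by a genuinely different route ($TT^{*}$, the fixed-time dispersive estimate, and Hardy--Littlewood--Sobolev in time), a method that cannot reach the $\delta>\frac12$ case you treat, just as your slicing argument does not give the $\delta<\frac12$ range.
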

 We will write such estimates as
 \begin{align}
 &\|F\|_{L^p(dt) L^q(dx) L^2 (d y)} \lesssim  \|F\|_{X^{\frac{1}{2}+}} \label{XXuwenest}\\
 &\mbox{and the dual estimate is}\\
 & \|F\|_{X^{-\frac{1}{2}-}}  \lesssim \|F\|_{L^{p'}(dt) L^{q'}(dx) L^2 (d y)} \label{XXuwenestdual}
 \end{align}
These estimates also hold in $x+y, x-y$ coordinates for $X^{\delta}$ spaces, but not $X_{\pm}^{\delta}$ spaces.

The following is a slightly sharper version of the estimates used in \cite{GM3}. Sharp estimates for ${\delta}<\frac{1}{2}$ have been known for a long time,
going back at least as far as
Tataru's paper \cite{Tataru}.
\begin{lemma}\label{s<12}
Let $0<{\delta}<\frac{1}{2}$ and $p, q >2$ such that
$\frac{2}{p}+\frac{3}{q} =\frac{5-4{\delta}}{2}$ Then
\begin{align*}
&\|F\|_{L^p(dt) L^q(dx) L^2(dy)} \lesssim_{\delta} \|F\|_{X^{\delta}}\\
&\|F\|_{X^{-{\delta}}} \lesssim \|F\|_{L^{p'}(dt) L^{q'}(dx) L^2(dy)}
\end{align*}
\end{lemma}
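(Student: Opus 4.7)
The plan is to obtain the estimate by complex interpolation between two endpoints. At $\delta = 0$, the trivial identification $X^0 \cong L^2(dt\,dx\,dy)$ gives the case $(p,q) = (2,2)$ on the scaling line $\tfrac{2}{p}+\tfrac{3}{q}=\tfrac{5}{2}$. At $\delta = \tfrac{1}{2}$, I would use the sharp \emph{non-endpoint} Bourgain--Strichartz bound
\[
\|F\|_{L^{p_0}(dt)L^{q_0}(dx)L^2(dy)} \lesssim \|F\|_{X^{1/2}},
\]
valid for every Strichartz admissible pair $(p_0, q_0)$ with $p_0, q_0 > 2$ strictly (that is, strictly inside the admissible segment, avoiding the $L^2L^6$ and $L^\infty L^2$ endpoints where an $X^{1/2+}$ is needed as in Lemma \ref{3.1}). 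This sharp $X^{1/2}$-estimate is classical and is precisely the refinement attributed to \cite{Tataru} in the remark preceding the lemma.

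Next, I would apply Stein's complex interpolation theorem to the analytic family of Fourier multipliers $T_z F := \mathcal{F}^{-1}(\langle\tau+|\xi|^2+|\eta|^2\rangle^{-z}\hat F)$ on the strip $\mathrm{Re}\,z \in [0,\tfrac{1}{2}]$: on $\mathrm{Re}\,z = 0$ the multiplier $T_z$ is an $L^2$-isometry, and on $\mathrm{Re}\,z = \tfrac{1}{2}$ it maps $L^2$ to $L^{p_0}L^{q_0}L^2$ by the sharp endpoint above, both with polynomial growth in $|\mathrm{Im}\,z|$. Interpolation at parameter $\theta \in (0,1)$ yields $T_{\theta/2}: L^2 \to L^{p_\theta}L^{q_\theta}L^2$ with $\tfrac{1}{p_\theta}=\tfrac{1-\theta}{2}+\tfrac{\theta}{p_0}$ and $\tfrac{1}{q_\theta}=\tfrac{1-\theta}{2}+\tfrac{\theta}{q_0}$, so $\tfrac{2}{p_\theta}+\tfrac{3}{q_\theta}=\tfrac{5}{2}-\theta$. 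Rewriting $G = T_{\theta/2} F$ and using $\|F\|_{L^2} = \|G\|_{X^{\theta/2}}$ translates this to
\[
\|G\|_{L^{p_\theta}L^{q_\theta}L^2} \;\lesssim\; \|G\|_{X^{\theta/2}}.
\]
Setting $\theta = 2\delta \in (0,1)$ produces the sharp estimate with exponent $X^\delta$ and scaling $\tfrac{5-4\delta}{2}$, and varying $(p_0,q_0)$ along the admissible line sweeps out every pair $(p,q)$ with $p,q>2$ of the prescribed type.

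The dual estimate then follows from the forward one by $L^2$-duality between $X^\delta$ and $X^{-\delta}$: for any test function $G$,
\[
|\langle F, G\rangle| \le \|F\|_{L^{p'}L^{q'}L^2}\|G\|_{L^pL^qL^2} \lesssim \|F\|_{L^{p'}L^{q'}L^2}\|G\|_{X^\delta},
\]
so $\|F\|_{X^{-\delta}} = \sup\{|\langle F,G\rangle| : \|G\|_{X^\delta} \le 1\} \lesssim \|F\|_{L^{p'}L^{q'}L^2}$. The main obstacle is the sharp $X^{1/2}$-Strichartz bound above (rather than the $X^{1/2+}$ of Lemma \ref{3.1}), which is the classical ingredient supplied by \cite{Tataru}: its proof proceeds via a Bourgain-type atomic decomposition of $X^{s,1/2}$ in the modulation variable combined with the Strichartz inequality for each atom, and the strict inequality $p_0, q_0 > 2$ is essential for the dyadic summation to converge without logarithmic loss. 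Once this classical ingredient is in place, the Stein interpolation step and the duality argument are routine.
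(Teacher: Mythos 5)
Your argument hinges on one key input: the claim that for every Strichartz admissible pair with $p_0,q_0>2$ one has the sharp bound $\|F\|_{L^{p_0}(dt)L^{q_0}(dx)L^2(dy)}\lesssim\|F\|_{X^{1/2}}$, i.e.\ with $b=\tfrac12$ exactly rather than the $X^{\frac12+}$ of Lemma \ref{3.1}. This is precisely the point the paper flags as unavailable: the remark after the lemma calls $\delta=\tfrac12$ with admissible $(p,q)$ the \emph{false end-point} and says the interpolation picture is only a formal motivation, and the citation of \cite{Tataru} is for sharp estimates in the regime $\delta<\tfrac12$, not for an $X^{1/2}$ endpoint. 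Your closing justification of the endpoint does not repair this: decomposing $X^{0,1/2}$ dyadically in modulation and applying Strichartz to each atom gives $\|u_k\|_{L^{p_0}L^{q_0}L^2}\lesssim 2^{k/2}\|u_k\|_{L^2}$ with the factor $2^{k/2}$ forced by admissibility (an equality constraint), so taking $p_0,q_0>2$ strictly inside the admissible segment gives no slack whatsoever in the modulation variable; since $X^{1/2}$ only controls the $\ell^2$ sum of $2^{k/2}\|u_k\|_{L^2}$ over scales, the summation loses a logarithm, which is exactly why only the $\ell^1$-Besov refinement $X^{0,\frac12,1}$ (or $b>\tfrac12$) is available by this route. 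So the central ingredient of your proof is unproven (and treated as false by the authors), and with it the Stein interpolation step has nothing to interpolate against. The duality step at the end is fine but rests on the first inequality.

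For comparison, the paper proves the lemma directly, which is how the ``formal interpolation'' is made rigorous without any endpoint: it suffices to bound the homogeneous multiplier $|\tau+|\xi|^2+|\eta|^2|^{-\delta}$ from $L^2$ to $L^pL^qL^2$, and by the $TT^*$ method this reduces to showing that the kernel $(k\ast k)(t-t')e^{i(t-t')(\Delta_x+\Delta_y)}$, with $|k\ast k(t-t')|\lesssim|t-t'|^{-(1-2\delta)}$, maps $L^{p'}L^{q'}L^2$ to $L^pL^qL^2$; this follows from the fixed-time dispersive bound $\|e^{it\Delta}f\|_{L^q}\lesssim|t|^{-3(\frac{1}{q'}-\frac12)}\|f\|_{L^{q'}}$ together with Hardy--Littlewood--Sobolev in time, the exponent relation $\frac2p+\frac3q=\frac{5-4\delta}{2}$ being exactly what makes the time exponents match. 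If you want to salvage an interpolation-style proof, you would have to interpolate with an $X^{\frac12+}$ bound and track the loss, which does not yield the stated sharp line; the direct $TT^*$ argument avoids this.
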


\begin{remark}
  These estimates are motivated by interpolating, formally, between the false end-point ${\delta}=\frac{1}{2}$ with $p, q$ Strichartz admissible and the trivial case ${\delta}=0$, $p=q=2$. We don't know if the above estimate corresponding to $p=2$, $\delta < \frac{1}{2}$ is correct. This is not needed for our paper, but would remove some epsilons from the exposition.

Notice that if ${\delta}< \frac{1}{2}$ is sufficiently close to $\frac{1}{2}$, $p$ and $q$ can be made arbitrarily close to Strichartz admissible pairs.

\end{remark}

For the sake of completeness, we include a proof of Lemma \ref{s<12}.

\begin{proof}
It is easier to prove the (stronger) homogeneous version, corresponding to the weight $\big|\tau+|\xi|^2 + |\eta|^2\big|^{\delta}$. So we prove that the  operator $T(F)$ defined by
\begin{align*}
\F \left(T(F)\right)(\tau, \xi, \eta) := \frac{1}{\big|\tau+|\xi|^2 + |\eta|^2\big|^{\delta}} \F (F)(\tau, \xi, \eta)
\end{align*}
maps $L^2(dt dx dy)$ to $L^p(dt) L^q(dx)L^2(dy)$. ($\F$ denotes the space-time Fourier transform). In physical space variables, this is given by
$T(F)(t, x, y)= \int k(t-s) e^{i(t-s)\left(\Delta_x+\Delta_y\right)}F(s, \cdot)ds$ where $k$ is the inverse Fourier transform of $\frac{1}{|\tau|^{\delta}}$.
By the $T T^*$ method, it suffices to show that the kernel\\
 $(k*k)(t-t') e^{i (t-t')\left(\Delta_x+\Delta_y\right)}$ maps
$L^{p'}(dt) L^{q'}(dx)L^2(dx)$ to $L^{p}(dt) L^{q}(dx)L^2(dy)$. But this follows from the known fixed time mapping properties of
$e^{i (t-t')\Delta_x}$, namely
\begin{align*}
\|e^{i (t-t')\Delta} f(x)\|_{L^q} \lesssim \frac{1}{|t-t'|^{3(\frac{1}{q'}-\frac{1}{2})}} \|f\|_{L^{q'}(\mathbb R^3)}
\end{align*}
(see \cite{taobook}),
the pointwise estimate
 $|k*k(t-t')| \lesssim \frac{1}{|t-t'|^{1-2{\delta}}}$, and Hardy-Littlewood-Sobolev. Explicitly,

\begin{align*}
&\bigg\|\big\|\|\int
(k*k)(t-t') e^{i (t-t')\left(\Delta_x+\Delta_y\right)}F(t') dt'\|_{L^2(dy)}\big\|_{L^q(dx)}\bigg\|_{L^p(dt)}\\
&
\le \bigg\|\int \big\| \|
(k*k)(t-t') e^{i (t-t')\Delta_x}F(t') \|_{L^2(dy)}\big\|_{L^q(dy)} d t'\bigg\|_{L^p(dt)}\\
&
\le \bigg\|\int \big\| \|
(k*k)(t-t') e^{i (t-t')\Delta_x}F(t') \|_{L^q(dx)}\big\|_{L^2(dy)} d t'\bigg\|_{L^p(dt)}\\
&\lesssim \bigg\|\int
\frac{1}{|t-t'|^{1-2{\delta} + 3\left(\frac{1}{q'}-\frac{1}{2}\right)}}\big\|\|F(t')\|_{L^{q'}(dx)}\big\|_{L^2(dy)} d t'
\bigg\|_{L^p(dt)}\\
&\lesssim \bigg\|\big\|\|F\|_{L^{q'}(dx)}\big\|_{L^2(dy)}\bigg\|_{L^{p'}(dt)}
\lesssim \bigg\|\big\|\|F\|_{L^2(dy)}\big\|_{L^{q'}(dx)}\bigg\|_{L^{p'}(dt)}
\end{align*}
We used Hardy-Littlewood-Sobolev and the identity
\begin{align*}
1-2{\delta} + 3\left(\frac{1}{q'}-\frac{1}{2}\right) + \frac{1}{p'}-1=\frac{1}{p}
\end{align*}

\end{proof}
We summarize the basic space-time  collapsing  estimates used in \cite{GM3}. These are inspired by estimates in
\cite{KM0, K-MMM}.
\begin{lemma} \label{spacetime}
If $\S \Lambda=0$ then
\begin{align}
&\sup_z \||\nabla|_x^{1/2} \Lambda(t, x, x+z) \|_{L^2(dt dx )} \lesssim \||\nabla|_x^{1/2} |\nabla|_y^{1/2} \Lambda_0(x, y)\|_{L^2(dx dy)}
\label{higherlambda}
\end{align}
Also, if   $\alpha>\frac{1}{2}$, $\delta>0$ then
\begin{align}
&\sup_z\|<\nabla_x>^{\alpha}\Lambda(t, x, x+z) \|_{L^2(dt dx )} \label{higherlambdaX} \\
&\lesssim \|<\nabla_x>^{\alpha} <\nabla_y>^{\alpha }\Lambda \|_{X^{1/2+ \delta}} \notag \\
\end{align}
 \end{lemma}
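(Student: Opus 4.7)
The plan for the first (homogeneous) estimate is a classical Klainerman--Machedon type Fourier computation. Writing $g:=\widehat{\Lambda_0}$, the free evolution gives $\widehat{\Lambda}(t,\xi,\eta) = e^{-it(|\xi|^2+|\eta|^2)} g(\xi,\eta)$. Taking the $(t,x)$-Fourier transform of $\Lambda(t,x,x+z)$ and changing variables via $\mu = \xi + \eta$ produces
\[
c \int \delta\bigl(\tau + |\mu-\eta|^2 + |\eta|^2\bigr)\, e^{iz\eta}\, g(\mu-\eta,\eta)\, d\eta.
\]
By Plancherel and Cauchy--Schwarz with weight $|\mu-\eta|^{1/2}|\eta|^{1/2}$, the estimate reduces to the uniform kernel bound
\[
|\mu| \int \delta\bigl(\tau + |\mu-\eta|^2+|\eta|^2\bigr) \frac{d\eta}{|\mu-\eta|\, |\eta|} \le C.
\]
Since $|e^{iz\eta}|=1$, uniformity in $z$ comes for free.

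For this kernel bound I would use the co-area formula with $L(\eta) = |\mu-\eta|^2 + |\eta|^2$. The level set $\{L = -\tau\}$ is a sphere of radius $R$ centered at $\mu/2$ (when $-\tau \ge |\mu|^2/2$) with $|\nabla_\eta L| = 4R$. Parametrizing $\eta = \mu/2 + R\omega$, one computes $|\mu-\eta|^2 |\eta|^2 = A^2 - B^2 \cos^2\theta$ where $A = |\mu|^2/4 + R^2$, $B = R|\mu|$, and $\theta$ is the angle of $\omega$ with $\mu$; note $A \ge B$. The angular integral then reduces to $\int_{-1}^1 du/\sqrt{A^2 - B^2 u^2} = (2/B)\arcsin(B/A)$, and the $R$-factors cancel to yield $|\mu|\, I(\tau, \mu) = \pi \arcsin(B/A) \le \pi^2/2$. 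Undoing Cauchy--Schwarz and reversing the change of variables $\mu = \xi+\eta$ in the other factor recovers $\||\nabla_x|^{1/2}|\nabla_y|^{1/2}\Lambda_0\|_{L^2}^2$.

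For the second estimate I would use the standard transfer principle. Setting $\widehat{u_\lambda}(\xi,\eta) := \widehat{\Lambda}(\lambda - |\xi|^2 - |\eta|^2, \xi,\eta)$ decomposes $\Lambda(t) = c \int e^{i\lambda t} \, e^{it(\Delta_x+\Delta_y)} u_\lambda\, d\lambda$ as a superposition of modulated free solutions, with $\int (1+|\lambda|)^{1+2\delta} \|u_\lambda\|_{H^\alpha \otimes H^\alpha}^2\, d\lambda \sim \|<\nabla_x>^\alpha <\nabla_y>^\alpha \Lambda\|_{X^{1/2+\delta}}^2$. Minkowski's inequality, together with translation-invariance of the left-hand side in $t$, reduces the claim to the $<\nabla>^\alpha$ analog of the homogeneous estimate applied to each $u_\lambda$; Cauchy--Schwarz in $\lambda$ against the integrable weight $(1+|\lambda|)^{-(1+2\delta)}$ (where $\delta > 0$ is essential) then closes the argument. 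The main obstacle is promoting the homogeneous estimate from $|\nabla|^{1/2}$ to $<\nabla>^\alpha$: I would treat this by Littlewood--Paley, using the homogeneous estimate on each dyadic block (the extra $\alpha - 1/2 > 0$ powers provide summability over dyadic frequencies), together with a trivial variant for the lowest-frequency piece, where $\Sigma_{\tau,\mu}$ has uniformly bounded area and the same Cauchy--Schwarz goes through with weight $1$.
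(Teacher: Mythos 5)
The paper itself offers no proof of Lemma \ref{spacetime}: it is imported from \cite{GM3} and the Klainerman--Machedon family of collapsing estimates \cite{KM0, K-MMM}, and your argument follows exactly that classical route. Your treatment of \eqref{higherlambda} is complete and correct: the reduction by Plancherel and Cauchy--Schwarz to the kernel bound $|\mu|\int\delta(\tau+|\mu-\eta|^2+|\eta|^2)\,d\eta/(|\mu-\eta||\eta|)\le C$, the identification of the level set as the sphere $|\eta-\mu/2|=R$ with $|\nabla_\eta L|=4R$, and the cancellation leaving $\pi\arcsin(B/A)\le\pi^2/2$ are all right, and the unimodular factor $e^{iz\cdot\eta}$ indeed gives the $z$-uniformity for free. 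The transfer-principle reduction of \eqref{higherlambdaX} to a homogeneous estimate for the modulated free data $u_\lambda$, with Cauchy--Schwarz in $\lambda$ against $(1+|\lambda|)^{-(1+2\delta)}$, is also the standard argument and is where the hypothesis $\delta>0$ enters, as you say.

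The one step that does not work as literally stated is the final Littlewood--Paley claim that \eqref{higherlambda} applied blockwise, plus a trivial lowest-frequency piece, yields the $<\nabla_x>^{\alpha}$ homogeneous bound. For a high--high block $|\xi|\sim|\eta|\sim 2^{j}$ the collapsed function has output frequencies $\mu=\xi+\eta$ filling the whole ball $|\mu|\lesssim 2^{j}$, in particular $|\mu|\le 1$, where the weight $|\mu|^{1/2}$ in \eqref{higherlambda} degenerates while $<\mu>^{\alpha}\sim 1$; so \eqref{higherlambda} applied to that block gives no control of this portion, and the gain $2^{-j(\alpha-\frac12)}$ cannot rescue a bound you do not have (your ``lowest-frequency piece'' only covers the case when both \emph{input} frequencies are $\lesssim 1$). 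What is needed in addition is the no-derivative collapsing bound for a block,
\begin{align*}
\sup_{\tau,\mu}\int_{|\mu-\eta|\sim 2^{j},\,|\eta|\sim 2^{k}}\delta\big(\tau+|\mu-\eta|^2+|\eta|^2\big)\,d\eta
\lesssim \frac{\min(2^{j},2^{k})^{2}}{\max(2^{j},2^{k})},
\end{align*}
which is precisely the surface-measure computation $\int dS/|\eta|$ used in the second proof of Proposition \ref{quartertime}; with it the low output frequencies of each block contribute $\min^{1-\alpha}\max^{-\frac12-\alpha}$ times the block norm, which is summable for $\alpha>\frac12$. Alternatively, skip the block decomposition and verify directly, by the same co-area computation as in your first part, that $\sup_{\tau,\mu}<\mu>^{2\alpha}\int\delta(\tau+|\mu-\eta|^2+|\eta|^2)<\mu-\eta>^{-2\alpha}<\eta>^{-2\alpha}d\eta<\infty$ when $\alpha>\frac12$. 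Either patch is routine, but as written the last step of your sketch is incomplete.
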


\section{New estimates}

\begin{proposition}\label{quartertime}
Let $\alpha>\frac{1}{2}$, and let $\Lambda(t, x, y)$, $F(t, x, y)$ such that
\begin{align*}
\Lambda= e^{it\Delta} \Lambda_0+\int_{-\infty}^{ \infty} c(t-s)e^{i(t-s)\Delta}F(s, \cdot)ds
\end{align*}
Then there exists $\epsilon>0$ such that
\begin{align*}
&\sup_z \|\big|\partial_t\big|^{\frac{1}{4}}\left(\Lambda(t, x, x+z)\right)\|_{L^2}\\
& \lesssim
  \|<\nabla_x>^{\alpha}<\nabla_y>^{\alpha}
  \Lambda_0\|_{L^2}\\
&+ \|<\nabla_x>^{\alpha}<\nabla_y>^{\alpha}
 F\|_{X^{-\frac{1+\epsilon}{2}}}
 &+\| <\nabla_x>^{\alpha}<\nabla_y>^{\alpha-\frac{1}{2}}
 F\|_{X^{-\frac{1+2 \epsilon}{4}}}
\end{align*}
\end{proposition}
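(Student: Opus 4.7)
The plan is to split $\Lambda = \Lambda_h + \Lambda_F$, where $\Lambda_h := e^{it\Delta}\Lambda_0$ and $\Lambda_F := \int c(t-s)e^{i(t-s)\Delta}F(s,\cdot)\,ds$, and treat each summand separately. For the homogeneous part, I would first pass to the space-time Fourier side in $(t,x)$ with $z$ held fixed. A direct computation gives
\begin{align*}
\widehat{\Lambda_h(\cdot,\cdot,\cdot+z)}(\tau,u) = C\int e^{iz\cdot\eta}\,\widehat{\Lambda_0}(u-\eta,\eta)\,\delta\bigl(\tau+|u-\eta|^2+|\eta|^2\bigr)\,d\eta,
\end{align*}
which, after completing the square and parametrizing the zero set of the $\delta$ as $\eta = u/2 + r\omega$ with $r=\sqrt{-\tau/2 - |u|^2/4}$ and $\omega\in S^2$, becomes a surface integral
\begin{align*}
\widehat{\Lambda_h(\cdot,\cdot,\cdot+z)}(\tau,u) = C\,r\,e^{iz\cdot u/2}\int_{S^2} e^{irz\cdot\omega}\,\widehat{\Lambda_0}(u/2-r\omega,\, u/2+r\omega)\,d\omega.
\end{align*}
By Plancherel, the squared left-hand side of the proposition (for $\Lambda_h$) equals $\int|\tau|^{1/2}\,|\widehat{(\cdot)}|^2\,d\tau\,du$, with $|\tau|=|u|^2/2 + 2r^2$ on the paraboloid.

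The decisive move is to apply Cauchy--Schwarz on the sphere \emph{weighted} by $(\langle\xi\rangle\langle\eta\rangle)^{\alpha}$, where $\xi=u/2-r\omega$ and $\eta=u/2+r\omega$. Since $|e^{irz\cdot\omega}|=1$, the $z$-dependence drops out of the bound, producing a uniform sup in $z$ for free. Reverting $(u,r,\omega)\to(\xi,\eta)$ via $d\xi\,d\eta = r^2\,dr\,d\omega\,du$, the estimate reduces to the pointwise kernel bound $W(\xi,\eta)\lesssim 1$ uniformly, where
\begin{align*}
W(\xi,\eta)=\sqrt{|\xi|^2+|\eta|^2}\cdot\frac{|\xi-\eta|}{2}\cdot I(\xi+\eta,|\xi-\eta|/2),
\end{align*}
and $I(u,r)=\int_{S^2}(\langle u/2-r\omega'\rangle\langle u/2+r\omega'\rangle)^{-2\alpha}\,d\omega'$. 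Verifying $W\lesssim 1$ is a case analysis on $(u,r)$. The delicate regime is $r\approx|u|/2$ with $r$ large (the sphere sweeps arbitrarily close to $\xi=0$ or $\eta=0$): parametrizing by $s=\cos\theta$ along the axis $\hat u$ one finds the interior of the sphere contributes $I\sim r^{-4\alpha}$ (a convergent integral for $\alpha<1$), which combined with the prefactor $\sqrt{|\xi|^2+|\eta|^2}\cdot r\sim r^2$ yields $W\sim r^{2-4\alpha}\lesssim 1$ precisely because $\alpha>1/2$; the other regimes ($r\ll|u|$, $r\gg|u|$, or both bounded) give $W\to 0$ or $W$ bounded by inspection.

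For the inhomogeneous part I would use the Fourier identity $\widehat{\Lambda_F}(\tau,\xi,\eta)=\widehat{c}(\tau+|\xi|^2+|\eta|^2)\,\widetilde F(\tau,\xi,\eta)$, where $\widetilde F$ is the full space-time Fourier transform of $F$ and $|\widehat{c}(\rho)|\lesssim \min(T,1/|\rho|)$ measures proximity to the paraboloid. The same surface analysis as above applies, except that the $\eta$-integration is no longer constrained to the sphere and the factor $\widehat c$ enters. Applying a weighted Cauchy--Schwarz that splits $\widehat c$ between the paraboloid-weight $|\tau+|\xi|^2+|\eta|^2|^{-s}$ and the spatial weight $(\langle\xi\rangle\langle\eta\rangle)^{-\beta}$ reduces the bound to (a modulated version of) the same kernel $W$ from the homogeneous case. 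The two alternative norms on $F$ in the statement reflect two admissible choices of this split: the norm with $X^{-(1+\epsilon)/2}$ and full $\alpha$-derivatives on both variables handles the range where $(\langle\xi\rangle\langle\eta\rangle)^\alpha$ already absorbs $|\partial_t|^{1/4}\sim (|\xi|^2+|\eta|^2)^{1/4}$, while the $X^{-(1+2\epsilon)/4}$ norm with a half-derivative traded from $y$ covers the complementary regime, where the weaker $X$-weight is compensated by the asymmetric spatial weight $\langle\xi\rangle^\alpha\langle\eta\rangle^{\alpha-1/2}$ restoring boundedness of $W$.

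The main obstacle is the pointwise verification $W(\xi,\eta)\lesssim 1$ in the critical sphere regime $r\approx|u|/2$, which is exactly where the hypothesis $\alpha>1/2$ is sharp. A secondary technical point is tracking the $\widehat c$ factor in the inhomogeneous case carefully enough that the $X^{-s}$ exponents on the right-hand side match the statement and not a weaker variant.
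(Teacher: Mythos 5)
Your proposal is correct and follows essentially the same route as the paper's own (first, appendix) proof: Fourier restriction to the diagonal, weighted Cauchy--Schwarz over the spheres $|\eta-u/2|=r$ with the angular average supplying the decay $(|\xi|^{2}+|\eta|^{2})^{-2\alpha}$ (your critical-regime analysis $W\sim r^{2-4\alpha}$ for $\alpha>\tfrac12$ is exactly the paper's computation of the sphere integral $I_{\beta,\gamma}$), and, for the Duhamel part, the splitting of $|\tau|^{1/4}$ between spatial frequency and modulation together with a division of the decay of $\widehat{c}$ between the $X$-norm of $F$ and the kernel, which is precisely how the two norms $X^{-\frac{1+\epsilon}{2}}$ and $X^{-\frac{1+2\epsilon}{4}}$ arise there. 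What you leave implicit --- the uniform bound on the resulting radial/modulation kernel (the paper's integral $J$) and the device producing the asymmetric weight $\langle\nabla_x\rangle^{\alpha}\langle\nabla_y\rangle^{\alpha-\frac12}$ (the paper inserts the factor $\widehat{\sigma}^{\gamma}=\left(\tfrac{|\xi_{1}||\xi_{2}|}{|\xi_{1}|^{2}+|\xi_{2}|^{2}}\right)^{\gamma}$ into the angular Cauchy--Schwarz and then dominates it by the asymmetric inhomogeneous derivatives) --- are computations of the same kind as your homogeneous-case kernel bound rather than missing ideas.
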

\begin{proof} The proof for the solution to the homogeneous equation is similar to the proof of lemma \ref{spacetime}. The proof
for the inhomogeneous part is in the Appendix.
\end{proof}

We systematically solve
\begin{align*}
&\S \Lambda= F \\
&\Lambda(0)=\Lambda_0 \notag
\end{align*}
in an interval$[0, T]$
by constructing
\begin{align*}
&  \Lambda_1=
  e^{- i t \Delta}\Lambda_0 \\
  & +
  \int_{-\infty}^{\infty}c(t-s)e^{- i (t-s) \Delta}c(s) F(s) ds
\end{align*}
where $c(t)$ is the characteristic function of $[0, T]$, $T \le 1$,
and noticing $c(t)\Lambda=c(t)\Lambda_1$ .

We will  use
\begin{lemma}\label{energy}
Let $b>0$ and
\begin{align*}
\Lambda=\int_{-\infty}^{\infty}c(t-s)e^{- i (t-s)\Delta_{x, y}}F ds
\end{align*}
Then
\begin{align*}
&\|\Lambda\|_{X^{b}} \lesssim   \|F\|_{X^{b-1}}
\end{align*}
\end{lemma}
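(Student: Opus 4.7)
The plan is to reduce the estimate to a single pointwise bound on the Fourier transform of the time cutoff. Writing $\phi := |\xi|^2 + |\eta|^2$, so that the $X^b$ weight is $\langle\tau+\phi\rangle^b$, the map $F \mapsto \Lambda$ is a pure convolution in time against the kernel $c(u)\,e^{iu\Delta_{x,y}}$ (up to the sign convention consistent with $\S$). Since convolution in $t$ diagonalizes under the space--time Fourier transform, the change of variable $u = t-s$ immediately yields
\[
\tilde\Lambda(\tau,\xi,\eta) \;=\; \hat c(\tau+\phi)\,\tilde F(\tau,\xi,\eta),
\]
where $\hat c(\lambda) := \int c(u)\,e^{-iu\lambda}\,du$. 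So the whole question is reduced to bounding the multiplier $\hat c(\tau+\phi)$.

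The only substantive step is to control $\hat c$. Since $c$ is the indicator of $[0,T]$ with $T \le 1$, the explicit formulas $\hat c(\lambda) = (1-e^{-iT\lambda})/(i\lambda)$ for $\lambda \ne 0$ and $\hat c(0)=T$ give
\[
|\hat c(\lambda)| \;\le\; \min\{\,T,\; 2/|\lambda|\,\} \;\lesssim\; \langle\lambda\rangle^{-1},
\]
uniformly in $T \in (0,1]$. In other words, the truncated Duhamel multiplier gains exactly one unit of regularity in the elliptic variable $\tau+\phi$, which is precisely the gap between the two $X^b$ spaces in the statement.

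Inserting this into the definition of $\|\Lambda\|_{X^b}^2$ then yields
\[
\|\Lambda\|_{X^b}^2 = \int \langle\tau+\phi\rangle^{2b}\,|\hat c(\tau+\phi)|^2\,|\tilde F|^2\,d\tau\,d\xi\,d\eta \;\lesssim\; \int \langle\tau+\phi\rangle^{2(b-1)}\,|\tilde F|^2 = \|F\|_{X^{b-1}}^2,
\]
which is the claimed estimate. There is essentially no obstacle here: the lemma is the standard energy inequality for the sharply-truncated Duhamel operator, and it reduces entirely to the single algebraic bound on $\hat c$. The argument is in fact uniform in $b \in \R$, so the hypothesis $b > 0$ plays no role; it is presumably stated this way only because the applications in later sections require positive $b$.
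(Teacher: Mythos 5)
Your argument is correct and is essentially the paper's own proof: the space--time Fourier transform turns the truncated Duhamel operator into multiplication by $\hat c$ evaluated at the symbol of $\S$, and the bound $|\hat c(\lambda)|\lesssim \langle\lambda\rangle^{-1}$ (uniform in $T\le 1$) gives the estimate. Your extra remarks — the explicit formula for $\hat c$ and the observation that the argument works for all $b\in\R$ — are accurate but add nothing beyond the paper's two-line proof.
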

This is a version of the standard "$X^{s, b}$ energy estimate" (Theorem 2.12) in \cite{taobook}, which is usually stated for $b>1/2$
and uses smooth cut-off functions in time.
\begin{proof} Let $\mathcal F$ denote the space-time Fourier transform. Then

\begin{align*}
&\mathcal F \left(\int_{-\infty}^{\infty}c(t-s)e^{- i (t-s) \Delta_{x, y}}F(s) ds\right)
=\hat c (\tau + |\xi|^2 + |\eta|^2) \tilde F(\tau, \xi, \eta)\\
&\mbox{and}\\
&|\hat c (\tau + |\xi|^2 + |\eta|^2)|\lesssim \frac{1}{<\tau + |\xi|^2 + |\eta|^2>}\\
\end{align*}
\end{proof}

\begin{remark}
We will have to work in $X^b$ for both
$b>\frac{1}{2}$
and $0<b<\frac{1}{2}$, as well as their dual spaces $X^{-b}$.

One reason is that
multiplication by $c(t)$ is not bounded on $X^{\frac{1}{2}+}$, but it is bounded on  $X^{\frac{1}{2}-}$.
This is because the Fourier transform of $\chi_{[0, \infty]}$ is a singular integral operator,   $|\tau|^{b} $ is in the class $A_2$ iff $-1<b<1$. See Stein's book \cite{Stein},  section 4 (Chapter 5) and remark 6.4 on page 218 of \cite{Stein}.

We want, for $0< b<1$
\begin{align*}
\|c(t) F\|_{X^b}=
\| \hat c* \tilde F\|_{L^2(<\tau+|\xi|^2+|\eta|^2>^{b})d \tau d \xi d\eta}\lesssim \| \tilde F\|_{L^2(<\tau+|\xi|^2+|\eta|^2>^{b}d \tau d \xi d \eta)}
\end{align*}
It suffices to show
\begin{align*}
\| \hat c* \tilde F\|_{L^2(|\tau+|\xi|^2+|\eta|^2|^{b})d \tau d \xi d\eta}\lesssim \| \tilde F\|_{L^2(|\tau+|\xi|^2+|\eta|^2|^{b}d \tau d \xi d \eta)}
\end{align*}
The convolution is in just one dimension, so, after changing variables, this follows from
\begin{align*}
\|\hat c* f\|_{L^2(|\tau|^{b}d \tau )}\lesssim \|  f\|_{L^2(|\tau|^{b}d \tau )}
\end{align*}
for $f=f(t)$.
This weighted $L^2$ estimate is one of the properties of $A_2$ weights. We remark that by the same argument
\begin{align*}
\||\partial_t|^{\frac{b}{2}}\left(c(t) f\right)\|_{L^2(dt dx)} \lesssim \||\partial_t|^{\frac{b}{2}} f\|_{L^2(dt dx)}
\end{align*}
for $0<b<1$.
\end{remark}

Next, we need to define projections such as $P_{|\xi-\eta|\lesssim N^{\beta'}}$. These have to be bounded on $L^p$ spaces, so they have to involve smooth cut-offs.

Let $\varphi\in C_0^{\infty}(-2, 2)$ be identically $1$ on $[-1, 1]$. Also, assume $2^I < N^{\beta'} \le 2^{I+1}$.

We define the operators
\begin{align*}
  &\left(P_{|\xi-\eta| \lesssim N^{\beta'}} F \right)(x, y) \\
  &=\left(P_{|\xi-\eta| < 2^I} F \right)(x, y) = \F^{-1} \left(
  \varphi\left(\frac{|\xi-\eta|}{2^I}\right)\F (F)(\xi, \eta)\right)\\
&\left(P_{|\xi-\eta| \gtrsim N^{\beta'}} F \right)(x, y) = F(x, y)- \left(P_{|\xi-\eta| \lesssim N^{\beta'}} F \right)(x, y)
\end{align*}
where $\F$ denotes the Fourier transform. Similarly, we define $P_{|\xi+\eta| \lesssim N^{\beta'}} F$ and
$P_{|\xi+\eta| \gtrsim N^{\beta'}} F$. These operators are bounded on all (mixed) $L^p(dx) L^q(dy)$ spaces.

The proof of Theorem \ref{mnthm} will be based on more technical estimates, using a more complicated norm:
(denoted by $\N=\N_T$, as opposed to the more elementary norms $N_T$ from the statement of Theorem \ref{mnthm}).
Recall $0 \le \beta <1$ and $\alpha >\frac{1}{2}$  is such that $2 \alpha \beta <1$.
Let $\frac{1}{2}-$ be a number such that  $\frac{1}{2}-<\frac{1}{2}$ and $\frac{1}{2}- \left(\frac{1}{2}-\right) << \alpha-\frac{1}{2}<<1-\beta$.

Define the norm

\begin{align*}
&\N(\Lambda)=
\|<\nabla_x>^{\alpha}<\nabla_y>^{\alpha}P_{|\xi-\eta| \gtrsim N^{\beta'}}c(t)\Lambda\|_{X^{\frac{1}{2}-}}\\
&+\|<\nabla_x>^{\alpha}<\nabla_y>^{\alpha}P_{|\xi+\eta| \gtrsim N^{\beta'}}c(t)\Lambda\|_{X^{\frac{1}{2}-}}\\
&+\|<\nabla_x>^{\alpha}<\nabla_y>^{\alpha}P_{|\xi-\eta|\lesssim N^{\beta'}}P_{|\xi+\eta|\lesssim N^{\beta'}}c(t)\Lambda\|_{L^{2}(dt) L^{6}( dx) L^2(dy)}\\
&+\|<\nabla_x>^{\alpha}<\nabla_y>^{\alpha}P_{|\xi-\eta|\lesssim N^{\beta'}}P_{|\xi+\eta|\lesssim N^{\beta'}}c(t)\Lambda\|_{L^{\infty}(dt) L^{2}( dx) L^2(dy)}\\
&+\mbox{same norm with $x$ and $y$ reversed}\\
&+\sup_w\|<\nabla>^{\alpha} c(t)\Lambda (t, x+w, x-w)\|_{L^2(dt dx)}\\
&+\sup_w\|\big|\partial_t\big|^{1/4} c(t)\Lambda (t, x+w, x-w)\|_{L^2(dt dx)} \\
&+\|c(t)\Lambda\|_{X^{\frac{1}{2}-}}+N^{-1}\|<\nabla_x>^{\alpha}<\nabla_y>^{\alpha}P_{|\xi-\eta|\lesssim N^{\beta'}, |\xi+\eta|\lesssim N^{\beta'}}c(t)\Lambda\|_{X^{\frac{1}{2}-}}\\
\end{align*}

The power $N^{-1}$ in the last term could be replaced by any $N^{-k}$, and it is meant to handle error terms.

The "long" version of our main linear theorem is

\begin{theorem}\label{mnthmlong}

Let $c$ be the characteristic function of $[0, T]$, and let $\Lambda$ satisfy the integral equation
\begin{align}
&  \Lambda=
  c(t)e^{- i t \Delta}\Lambda_0 \notag\\
  & +
  \frac{1}{N} \int_{-\infty}^{\infty}c(t-s)e^{- i (t-s) \Delta}v_N c(s)\Lambda(s) ds+
  \int_{-\infty}^{\infty}c(t-s)e^{- i (t-s) \Delta}c(s) F(s) ds \notag\\
  &=A+B+C \label{defB}
\end{align}
so that $\Lambda$ agrees with the solution of \ref{diffeq} in $[0, T]$. Then
\begin{align*}
&\N(\Lambda)\lesssim
\|<\nabla_x>^{\alpha} <\nabla_y>^{\alpha} \Lambda_0\|_{L^2}
+ \|<\nabla_x>^{\alpha} <\nabla_y>^{\alpha} c(t) F\|_{X^{-\frac{1}{2}+}}\\
&+ \min \{ \|<\nabla_x>^{\alpha} <\nabla_y>^{\alpha-\frac{1}{2}} F\|_{X^{-\frac{1}{4}-}},
 \|<\nabla_x>^{\alpha-\frac{1}{2}}<\nabla_y>^{\alpha} F\|_{X^{-\frac{1}{4}-}} \}
\end{align*}
\end{theorem}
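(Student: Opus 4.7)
The plan is to decompose $\Lambda = A + B + C$ as in (\ref{defB}) and bound every summand in the definition of $\mathcal{N}(\Lambda)$ against each of $A$, $B$, and $C$ separately. The initial-data piece $A$ and the external-force piece $C$ are handled by tools already in hand: Lemma \ref{energy} gives $\|c(t)A\|_{X^{1/2-}}$ and $\|c(t)C\|_{X^{1/2-}}$ in terms of $\|\Lambda_0\|_{L^2}$ and $\|F\|_{X^{-1/2+}}$; Lemma \ref{3.1} and Lemma \ref{s<12} then produce the Strichartz norms $L^2_t L^6_x L^2_y$ and $L^\infty_t L^2_x L^2_y$ on the frequency-localized pieces; Lemma \ref{spacetime} furnishes the collapsing estimate $\sup_w\|\langle\nabla\rangle^\alpha\Lambda(t,x+w,x-w)\|_{L^2}$; and Proposition \ref{quartertime} supplies the $|\partial_t|^{1/4}$ collapsing estimate, which is precisely the source of the $\min$ clause in the conclusion (with the mixed derivative $\langle\nabla_y\rangle^{\alpha-1/2}$ and weight $X^{-1/4-}$). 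Implementing this for $A$ and $C$ accounts for all right-hand-side terms in the theorem, so the remaining task is to control $\mathcal{N}(B)$.

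The engine powering the bound on $B=\frac{1}{N}\int c(t-s)e^{-i(t-s)\Delta}v_N c(s)\Lambda(s)\,ds$ is the smallness inherited from the prefactor $\frac{1}{N}$. Multiplication by $v_N(x-y)$ is a Fourier convolution in $\xi-\eta$ supported in a ball of radius $O(N^\beta)$ and acts as the identity on $\xi+\eta$; therefore $\langle\nabla_x\rangle^\alpha\langle\nabla_y\rangle^\alpha v_N(x-y)$ is of size $\lesssim N^{2\alpha\beta}$ in a suitable mixed $L^1(dx)L^\infty(dy)$ sense, giving the crucial gain $N^{2\alpha\beta-1}\to 0$ under the hypothesis $2\alpha\beta<1$. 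A fractional Leibniz expansion of $\langle\nabla_x\rangle^\alpha\langle\nabla_y\rangle^\alpha(v_N\Lambda)$ splits every occurrence into a term where derivatives hit $v_N$ (gaining the small factor $N^{2\alpha\beta-1}$ and needing only a crude norm of $\Lambda$, recovered from the $N^{-1}$-weighted $X^{1/2-}$ summand of $\mathcal{N}(\Lambda)$) and terms where derivatives hit $\Lambda$ (controlled by the full $\mathcal{N}(\Lambda)$ together with the uniform $L^1$ mass of $v_N$ and a H\"older-in-time factor $T^\epsilon$). Either way, one obtains $\mathcal{N}(B)\le \varepsilon(N,T)\mathcal{N}(\Lambda)$ with $\varepsilon(N,T)\to 0$ as $N\to\infty$, so $\mathcal{N}(B)$ can be absorbed on the left for $N$ sufficiently large.

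I expect the principal obstacle to be the two collapsing estimates at offset $w$, namely the $L^2_{t,x}$ bound with $\langle\nabla\rangle^\alpha$ and, especially, its $|\partial_t|^{1/4}$ analog, applied to $B$. The potential $v_N(x-y)$ concentrates precisely on the diagonal $\{w=0\}$ being traced, so the naive substitution $\S B = \frac{1}{N}v_N\Lambda$ into Proposition \ref{quartertime} with $X^{-1/2+}$-regularity loses too many $y$-derivatives. The remedy is to decompose $v_N\Lambda$ according to the frequency projections $P_{|\xi\pm\eta|\lesssim N^{\beta'}}$ versus $P_{|\xi\pm\eta|\gtrsim N^{\beta'}}$ built into $\mathcal{N}$ and to use Proposition \ref{quartertime} in its mixed-regularity form (the second argument of the $\min$): on the low-$|\xi-\eta|$ piece the half-derivative loss in $y$ is absorbed by the narrow Fourier support of $\widehat v_N$, which forces $|\xi-\eta|\lesssim N^\beta$ on the output and costs at most a power of $N^{\beta}$, still beaten by $N^{2\alpha\beta-1}$; on the high-$|\xi-\eta|$ piece the convolution $\widehat v_N\ast\widehat\Lambda$ forces high frequency on $\Lambda$, supplying the $X^{1/2-}$ input. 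Managing this frequency bookkeeping uniformly across all summands of $\mathcal{N}$, while keeping the various small powers of $T$ and $N$ working together, is where the bulk of the remaining technical work lies.
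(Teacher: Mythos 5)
Your treatment of $A$ and $C$, and your outline for the high-frequency pieces of $B$ (the Fourier support of $\widehat{v_N}$ forcing at least one factor $<\nabla_x>^{\alpha}$ or $<\nabla_y>^{\alpha}$ onto $\Lambda$, then dual Strichartz and the smallness $\frac{1}{N}\|v_N\|_{L^{3/2}+}\sim N^{-\epsilon}$), match the paper, up to one omission there: the projection $P_{|\xi-\eta|>2^I}$ on $B$ only forces $P_{|\xi-\eta|>2^{I-1}}$ on $\Lambda$, which is not a summand of $\N(\Lambda)$, so the paper must iterate the estimate roughly $K=I(\beta'-\beta)$ times (with $K\epsilon>1$) before the degraded localization can be dropped against the $N^{-1}$-weighted unlocalized term; that bookkeeping is not in your sketch but is repairable.

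The genuine gap is the doubly low-frequency region $|\xi-\eta|\lesssim N^{\beta'}$, $|\xi+\eta|\lesssim N^{\beta'}$, which is the heart of the theorem. Your mechanism there — fractional Leibniz putting the derivatives on $v_N$, gaining $N^{2\alpha\beta-1}$, paying a "crude" norm of $\Lambda$, or invoking Proposition \ref{quartertime} with $F=\frac{1}{N}v_N\Lambda$ and absorbing the half-derivative loss in $y$ by the support of $\widehat{v_N}$ — does not close numerically for $\beta$ near $1$. When the derivatives fall on $v_N$ the remaining object is essentially $N^{2\alpha\beta-1}\,\delta(x-y)\,\Lambda(t,\tfrac{x+y}{2},\tfrac{x+y}{2})$: the diagonal restriction of $\Lambda$ is not controlled by any undifferentiated $X^{\frac{1}{2}-}$ norm, and if instead you estimate $\frac{1}{N}\|(<\nabla>^{2\alpha}v_N)\Lambda\|$ by H\"older (cost $\|v_N\|_{L^{3/2}}\sim N^{\beta}$, or $\|v_N\|_{L^2}\sim N^{3\beta/2}$ for the $X^{-\frac{1}{4}-}$ route) against Strichartz norms of $\Lambda$, the total factor is $N^{2\alpha\beta+\beta-1}$ (or worse), which diverges; using the $N^{-1}$-weighted summand of $\N(\Lambda)$ costs a full factor $N$ and fares no better. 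The paper's point is precisely that no black-box $X^{s,b}$/Strichartz argument works here: one must exploit the exact structure $v_N\Lambda=\int v_N(z)\,\delta(x-y-z)f_z(t,x+y)\,dz$ and (i) prove the collapsing bounds for $B$ by the explicit kernel computation $\int_{|\eta|<M}\langle A+|\eta|^2\rangle^{-1}d\eta\lesssim M\log M$, giving the gain $\frac{M\log M}{N}$ with $M=N^{\beta'}$ against the \emph{collapsing} norms of $\Lambda$ (the $\sup_z$ norms with $<\nabla>^{\alpha}$ and $|\partial_t|^{\frac{1}{4}}$, which is exactly why those norms sit inside $\N$), and (ii) prove the $L^2_tL^6_xL^2_y$ and $L^\infty_tL^2_xL^2_y$ bounds via the decomposition $D_{z,0}+D_{z,1}+D_{z,2}$ of Lemma \ref{1/4}, where $D_{z,2}$ lives only in $X^{\frac{1}{4}+}$ but carries extra $<\nabla_{x+y}>^{\alpha}$ and $|\partial_t|^{\frac{1}{4}}$ regularity, recovered through the Sobolev-at-an-angle Lemma \ref{angle}. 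This self-consistent scheme — collapsing norms of $\Lambda$ controlling Strichartz and collapsing norms of $B$ — is the new idea of the proof and is absent from your proposal; without it the low-frequency contribution cannot be absorbed uniformly in $N$ for $\frac{2}{3}\le\beta<1$.
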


\begin{remark} Before proving the above theorem, we point out that it implies Theorem \ref{mnthm}.
The right hand sides of the two inequalities are the same.

As for the left hand sides, if $\Lambda$ is as in the statement of the above theorem, then
\begin{align*}
  N_T(\Lambda) \lesssim \N(\Lambda) +
\|<\nabla_x>^{\alpha} <\nabla_y>^{\alpha} \Lambda_0\|_{L^2}
+ \|<\nabla_x>^{\alpha} <\nabla_y>^{\alpha} c(t) F\|_{X^{-\frac{1}{2}+}}
\end{align*}
Indeed, recall
\begin{align*}
 N_T(\Lambda)=&\|<\nabla_x>^{\alpha}<\nabla_y>^{\alpha} c(t)\Lambda\|_{L^{2}(dt) L^{6}( dx) L^2(dy)}\\
 &+\|<\nabla_x>^{\alpha}<\nabla_y>^{\alpha}c(t)\Lambda\|_{L^{4}(dt) L^{3}( dx) L^2(dy)}\\
&+\mbox{same norm with $x$ and $y$ reversed}\\
&+\sup_w\|<\nabla>^{\alpha} c(t)\Lambda (t, x+w, x-w)\|_{L^2(dt dx)}
\end{align*}
We have
\begin{align*}
  &N(A) + N(C) \lesssim
  \|<\nabla_x>^{\alpha} <\nabla_y>^{\alpha} \Lambda_0\|_{L^2}
+ \|<\nabla_x>^{\alpha} <\nabla_y>^{\alpha} c(t) F\|_{X^{-\frac{1}{2}+}}
\end{align*}
(see lemmas \ref{3.1}, \ref{spacetime}, \ref{energy}).
 Also, using the definition of $B$ \ref{defB}, we have a small improvement in $B$ over $\Lambda$ at high frequencies:
 \begin{align*}
& \|<\nabla_x>^{\alpha}<\nabla_y>^{\alpha}P_{|\xi\pm\eta| \gtrsim N^{\beta'}}B\|_{X^{\frac{1}{2}+}}\\
& \lesssim N^{-\epsilon}\|<\nabla_x>^{\alpha}<\nabla_y>^{\alpha}P_{|\xi\pm\eta| \gtrsim N^{\beta'}}c(t)\Lambda\|_{X^{\frac{1}{2}-}}
 \end{align*}

 Indeed, using Lemma \ref{energy} and \ref{dual},
 \begin{align*}
& \|<\nabla_x>^{\alpha}<\nabla_y>^{\alpha}P_{|\xi-\eta| \gtrsim N^{\beta'}}B\|_{X^{\frac{1}{2}+}}
 \lesssim
\frac{1}{N}  \|<\nabla_x>^{\alpha}<\nabla_y>^{\alpha}P_{|\xi-\eta| \gtrsim N^{\beta'}}v_N c\Lambda\|_{X^{-\frac{1}{2}+}}\\
&\lesssim \frac{1}{N}  \|<\nabla_x>^{\alpha}<\nabla_y>^{\alpha}P_{|\xi-\eta| \gtrsim N^{\beta'}}v_N c\Lambda\|_{L^{2-}(dt)L^{\frac{6}{5}+}(d(x-y))L^2(d(x+y))}\\
&\lesssim \|<\nabla_x>^{\alpha}<\nabla_y>^{\alpha}P_{|\xi-\eta| \gtrsim N^{\beta'}}c(t)\Lambda\|_{X^{\frac{1}{2}-}}
\end{align*}
We used the fact that, because of the frequency localization $|\xi-\eta| \gtrsim N^{\beta'}$, at least one of
 $|\xi|$, $|\eta|\gtrsim N^{\beta'}$, while  $v_N$ is supported, in Fourier space, at frequencies $\le N^{\beta}$.
 Thus at least one of
 $<\nabla_x>^{\alpha}, <\nabla_y>^{\alpha}$
falls on $\Lambda$, and then we used \ref{dual}.
For instance,
\begin{align*}
&\frac{1}{N}  \|P_{|\xi-\eta| \gtrsim N^{\beta'}}v_N <\nabla_x>^{\alpha}<\nabla_y>^{\alpha}c\Lambda\|_{X^{-\frac{1}{2}+}}\\
&\lesssim\frac{1}{N}  \|P_{|\xi-\eta| \gtrsim N^{\beta'}}v_N c<\nabla_x>^{\alpha}<\nabla_y>^{\alpha}\Lambda\|_{L^{2-}(dt)L^{\frac{6}{5}+}(d(x-y))L^2(d(x+y))}\\
&\lesssim \left(\frac{1}{N} \|v_N\|_{L^{\frac{3}{2}+}}\right)\|c<\nabla_x>^{\alpha}<\nabla_y>^{\alpha}\Lambda\|_{L^{2+}(dt)L^{6-}(d(x-y))L^2(d(x+y))}\\
&\lesssim N^{- \epsilon}\|<\nabla_x>^{\alpha}<\nabla_y>^{\alpha}P_{|\xi-\eta| \gtrsim N^{\beta'}}c(t)\Lambda\|_{X^{\frac{1}{2}-}}
\end{align*}
The term where one  $<\nabla>^{\alpha}$ falls on the potential is handled in a similar way, and so are
the terms involving $P_{|\xi+\eta| \gtrsim N^{\beta'}}$.

\end{remark}

\begin{proof}(of Theorem \ref{mnthmlong}).

The outline of the proof will be
\begin{align*}
&\N(\Lambda)\lesssim
\N(A)+ \N(C)\\
&+
\N(P_{|\xi-\eta|\gtrsim N^{\beta'}}B) + \N(P_{|\xi+\eta|\gtrsim N^{\beta'}}B) +
\N(P_{|\xi-\eta|\lesssim N^{\beta'}, |\xi+\eta|\lesssim N^{\beta'}}B)\\
&\mbox{and we will show each individual term is}\\
&\lesssim N^{-\epsilon} \N(\Lambda) + \|<\nabla_x>^{\alpha} <\nabla_y>^{\alpha} \Lambda_0\|
+ \|<\nabla_x>^{\alpha} <\nabla_y>^{\alpha}c(t) F\|_{X^{-\frac{1}{2}+}}\\
&+ \min \{ \|<\nabla_x>^{\alpha} <\nabla_y>^{\alpha-\frac{1}{2}} F\|_{X^{-\frac{1}{4}-}},
 \|<\nabla_x>^{\alpha-\frac{1}{2}} <\nabla_y>^{\alpha} F\|_{X^{-\frac{1}{4}-}} \}
\end{align*}
Using Lemma \ref{spacetime} and Proposition \ref{quartertime} the above is already known for the $A$ and $C$ parts of $\Lambda$. More precisely, we have

\begin{align*}
&\N(A)\lesssim \|<\nabla_x>^{\alpha} <\nabla_y>^{\alpha} \Lambda_0\|_{L^2}\\
&\N(C) \lesssim  \|<\nabla_x>^{\alpha} <\nabla_y>^{\alpha}c(t) F\|_{X^{-\frac{1}{2}+}}+
\|<\nabla_x>^{\alpha} <\nabla_y>^{{\alpha} -\frac{1}{2}}c(t) F\|_{X^{-\frac{1}{4}-}}
\end{align*}

 All work will be devoted to proving

 \begin{align*}
&\N(P_{|\xi-\eta|\gtrsim N^{\beta'}}B) + \N(P_{|\xi+\eta|\gtrsim N^{\beta'}}B) \\
&\lesssim N^{-\epsilon}\bigg( \N(\Lambda) + \|<\nabla_x>^{\alpha} <\nabla_y>^{\alpha} \Lambda_0\|
+ \|<\nabla_x>^{\alpha} <\nabla_y>^{\alpha} F\|_{X^{-\frac{1}{2}+}}\bigg)\\
&\N(P_{|\xi-\eta|\lesssim N^{\beta'}, |\xi+\eta|\lesssim N^{\beta'}}B)
\lesssim N^{-\epsilon} \N(\Lambda)
\end{align*}

This will be split in several sections.

Whenever possible, we will estimate $B$ (localized in frequency space and differentiated) using
Lemma \ref{energy}, followed by H\"older's inequality and the Strichartz type estimates of Lemmas \ref{3.1} and \ref{s<12}.

Let
\begin{align*}
B=\frac{1}{N}\int_{-\infty}^{\infty}c(t-s)e^{- i (t-s) \Delta}v_N c(s)\Lambda(s) ds
\end{align*}

\begin{align*}
&\|B\|_{X^{\frac{1}{2}-}} \lesssim \frac{1}{N}\|v_N c(s) \Lambda\|_{X^{-\frac{1}{2}-}}\lesssim  \frac{1}{N}\|v_N c(s) \Lambda\|_{L^2(dt)L^{\frac{6}{5}}(d(x-y))L^2(d(x+y))}\\
&\lesssim \left( \frac{1}{N}\|v_N \|_{L^{\frac{3}{2}+}}\right) \|c(s)  \Lambda\|_{L^2(dt)L^{6-}(d(x-y)) L^2(d(x+y))}\\
&\lesssim N^{-\epsilon}\|c(s)  \Lambda\|_{X^{\frac{1}{2}-}} \lesssim N^{-\epsilon}\N(\Lambda)
\end{align*}
with
$\frac{1}{N}\|v_N \|_{L^{\frac{3}{2}+}}\sim N^{-\epsilon}$.

This is representative of the proof that follows provided at least one of $<\nabla_x>^{\alpha}$, $<\nabla_y>^{\alpha}$ fall
on $\Lambda$.

Also, from the above we can read off

\begin{align*}
&\|c(t) \Lambda\|_{X^{\frac{1}{2}-}} \lesssim
\| \Lambda_0\|_{L^2}
+ \|c(t) F\|_{X^{-\frac{1}{2}+}} + N^{-\epsilon}\N(\Lambda)
\end{align*}
and therefore (since $|\xi|$, $|\eta| \lesssim N^{\beta'}$ on the Fourier space support of $P_{|\xi-\eta|\lesssim N^{\beta'}}P_{|\xi+\eta|\lesssim N^{\beta'}}c(t)\Lambda$)

\begin{align*}
  &\|<\nabla_x>^{\alpha }<\nabla_y>^{\alpha}P_{|\xi-\eta|\lesssim N^{\beta'}}P_{|\xi+\eta|\lesssim N^{\beta'}}c(t)\Lambda\|_{X^{\frac{1}{2}-}}\\
&\lesssim N^{2 \alpha \beta'} \left(
\| \Lambda_0\|_{L^2}
+ \|c(t) F\|_{X^{-\frac{1}{2}+}}  + N^{-\epsilon}\N(\Lambda)\right)
\end{align*}

On the other hand, the terms
$\|<\nabla_x>^{\alpha }<\nabla_y>^{\alpha}P_{|\xi -\eta|\gtrsim N^{\beta'}} c(t)\Lambda\|_{X^{\frac{1}{2}-}}$ and
$\|<\nabla_x>^{\alpha }<\nabla_y>^{\alpha}P_{|\xi + \eta|\gtrsim N^{\beta'}} c(t)\Lambda\|_{X^{\frac{1}{2}-}}$

are part of $\N(\Lambda)$,
so we have to estimate the last term in $\N$:
\begin{align*}
&N^{-1}\|<\nabla_x>^{\alpha}<\nabla_y>^{\alpha}c(t)\Lambda\|_{X^{\frac{1}{2}-}} \\
&\lesssim
\| \Lambda_0\|_{L^2}
+ \|c(t) F\|_{X^{-\frac{1}{2}+}}  + N^{-\epsilon}\N(\Lambda)
\end{align*}

The rest of the proof of theorem \ref{mnthmlong} will be split into several sections.

\section{Estimates for $\N(B)$ at  frequency $|\xi-\eta|\gtrsim N^{\beta'}$ or $|\xi+\eta|\gtrsim N^{\beta'}$}

Before going into details, let us point out that in this region at least one of $|\xi|, |\eta| \gtrsim N^{\beta'}$. On the other hand, $v_N(x-y)$ is supported at frequencies $\le N^{\beta}<<N^{\beta'}$. Thus, heuristically, at least,
in the expression
\begin{align*}
&<\nabla_x>^{\alpha}<\nabla_y>^{\alpha} P_{|\xi\pm\eta|\gtrsim N^{\beta'}}\left(v_N \Lambda\right)\\
&\sim P_{|\xi\pm\eta|\gtrsim N^{\beta'}}\left((<\nabla_x>^{\alpha}v_N) <\nabla_y>^{\alpha} \Lambda\right)\\
&+P_{|\xi\pm\eta|\gtrsim N^{\beta'}}\left((<\nabla_y>^{\alpha}v_N) <\nabla_x>^{\alpha} \Lambda\right)\\
&+ P_{|\xi\pm\eta|\gtrsim N^{\beta'}}\left(v_N <\nabla_x>^{\alpha}<\nabla_y>^{\alpha} \Lambda\right)
\end{align*}
These  terms can be treated using Strichartz estimates and $X^{\pm\frac{1}{2}}$ techniques.

Recall
\begin{align*}
B=
  \frac{1}{N} \int_{-\infty}^{\infty}c(t-s)e^{- i (t-s) \Delta}v_N c(s)\Lambda(s) ds
\end{align*}

Also, we have to use the more precise notation $P_{|\xi-\eta|\gtrsim N^{\beta'}}= P_{|\xi-\eta|> 2^I}$
where we recall $N^{\beta'} \sim 2^I$. Notice $N^{\beta} \sim 2^{I(\beta-\beta')}N^{\beta'}$, and $I(\beta'-\beta) \to \infty $ as $N\to \infty$. Also, $N^{\beta} << N^{\beta'}$.

In this section we  prove
\begin{proposition} The following estimates hold:
\begin{align*}
&\N\left(P_{|\xi+\eta|\gtrsim N^{\beta'}} B\right)\\
&\lesssim
\|P_{|\xi+\eta|\gtrsim N^{\beta'}} <\nabla_x>^{\alpha}<\nabla_y>^{\alpha}B\|_{X^{\frac{1}{2}+}}\\
&\lesssim N^{- \epsilon} \|P_{|\xi+\eta|\gtrsim N^{\beta'}} <\nabla_x>^{\alpha}<\nabla_y>^{\alpha}c(t)\Lambda\|_{X^{\frac{1}{2}-}}\\
&\lesssim  N^{- \epsilon} \N(\Lambda)\\
&\N\left(P_{|\xi-\eta|\gtrsim N^{\beta'}} B\right)\\
&\lesssim\|P_{|\xi-\eta|\gtrsim N^{\beta'}} <\nabla_x>^{\alpha}<\nabla_y>^{\alpha}B\|_{X^{\frac{1}{2}+}}\\
&\lesssim N^{- \epsilon}\bigg(
 \|<\nabla_x>^{\alpha} <\nabla_y>^{\alpha} \Lambda_0\|
+ \|<\nabla_x>^{\alpha} <\nabla_y>^{\alpha} c(t) F\|_{X^{-\frac{1}{2}+}}\\
&+ N^{-1} \|<\nabla_x>^{\alpha}<\nabla_y>^{\alpha}c(t)\Lambda\|_{X^{\frac{1}{2}-}}\bigg)\\
&\lesssim N^{- \epsilon}\bigg(
 \|<\nabla_x>^{\alpha} <\nabla_y>^{\alpha} \Lambda_0\|
+ \|<\nabla_x>^{\alpha} <\nabla_y>^{\alpha} c(t) F\|_{X^{-\frac{1}{2}+}}\\
&+ \N(\Lambda)\bigg)\\
\end{align*}

\end{proposition}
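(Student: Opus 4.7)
The plan is to reduce both estimates to the core calculation already sketched in the excerpt, namely
\[
\|B\|_{X^{\frac{1}{2}-}} \lesssim \tfrac{1}{N}\|v_N c(s)\Lambda\|_{X^{-\frac{1}{2}-}} \lesssim \tfrac{1}{N}\|v_N\|_{L^{\frac{3}{2}+}}\,\|c(s)\Lambda\|_{L^{2+}L^{6-}L^2} \lesssim N^{-\epsilon}\,\|c(s)\Lambda\|_{X^{\frac{1}{2}-}},
\]
combined with the $X^b$ energy estimate (Lemma \ref{energy}) and the dual Strichartz estimate \ref{dual} applied in the $(x-y, x+y)$ coordinates. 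The factor $\tfrac{1}{N}\|v_N\|_{L^{\frac{3}{2}+}} \sim N^{-\epsilon}$ is the source of all smallness; the only complication is to absorb the derivatives $\langle\nabla_x\rangle^{\alpha}\langle\nabla_y\rangle^{\alpha}$ in such a way that at least one falls on $\Lambda$ with the remaining derivatives on $v_N$ costing at most $N^{2\alpha\beta}$, which, since $2\alpha\beta<1$, is overwhelmed by the $\tfrac{1}{N}$ in the coefficient of $v_N$.

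The key structural observation for the $|\xi+\eta|\gtrsim N^{\beta'}$ piece is that multiplication by $v_N(x-y)$ acts in Fourier space by convolving $\widehat{\Lambda}(\xi,\eta)$ with $\widehat{v_N}(\xi')\delta(\xi'+\eta')$, which preserves the variable $\xi+\eta$. Consequently the projection $P_{|\xi+\eta|\gtrsim N^{\beta'}}$ commutes with multiplication by $v_N$, so that
\[
P_{|\xi+\eta|\gtrsim N^{\beta'}}(v_N \Lambda) = v_N\, P_{|\xi+\eta|\gtrsim N^{\beta'}}\Lambda.
\]
Applying Lemma \ref{energy}, distributing $\langle\nabla_x\rangle^{\alpha}\langle\nabla_y\rangle^{\alpha}$ by the Leibniz rule, and bounding the result by \ref{dual} plus Hölder (in the time-and-$(x-y)$ variables) yields the claimed bound $N^{-\epsilon}\N(\Lambda)$ directly, since the resulting quantity is exactly the second line of $\N(\Lambda)$.

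For the $|\xi-\eta|\gtrsim N^{\beta'}$ piece the same projection does \emph{not} commute with $v_N$, because $v_N$ can shift $\xi-\eta$ by at most $2N^{\beta}\ll N^{\beta'}$. I would handle this by inserting the Littlewood--Paley-type decomposition
\[
\Lambda = P_{|\xi-\eta|\gtrsim N^{\beta'}}\Lambda \;+\; P_{|\xi-\eta|\lesssim N^{\beta'}}P_{|\xi+\eta|\gtrsim N^{\beta'}}\Lambda \;+\; P_{|\xi-\eta|\lesssim N^{\beta'}}P_{|\xi+\eta|\lesssim N^{\beta'}}\Lambda.
\]
For the first two summands, the expression $\langle\nabla_x\rangle^{\alpha}\langle\nabla_y\rangle^{\alpha}(\cdot)$ is already controlled in $X^{\frac{1}{2}-}$ by $\N(\Lambda)$, and the same dual-Strichartz argument delivers an $N^{-\epsilon}\N(\Lambda)$ bound after multiplication by $\tfrac{1}{N}v_N$ (the small frequency shift induced by $v_N$ is harmless once we are already outside the low-low box). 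For the low-low summand one uses the very last term in the definition of $\N(\Lambda)$, namely $N^{-1}\|\langle\nabla_x\rangle^{\alpha}\langle\nabla_y\rangle^{\alpha} P_{\text{low,low}}c(t)\Lambda\|_{X^{\frac{1}{2}-}}$; the factor $N^{-1}$ is spent converting the $X^{\frac{1}{2}-}$ bound back into the quantity we need, and the additional powers of $N^{\beta'}$ produced by the derivatives are absorbed by the remaining $\tfrac{1}{N}\|v_N\|_{L^{\frac{3}{2}+}}\sim N^{-\epsilon}$ and the hypothesis $2\alpha\beta'<1$ (for $\beta'$ chosen close enough to $\beta$). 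The terms involving $\Lambda_0$ and $F$ on the right-hand side of the second bound arise because this low-low piece is in turn controlled through the integral equation \ref{defB}, passing the $A$ and $C$ contributions through the standard estimates used earlier for $\N(A)$ and $\N(C)$.

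The main obstacle is precisely the low-low contribution in the $|\xi-\eta|\gtrsim N^{\beta'}$ case: the projection there cannot be commuted through $v_N$, and the only bound available for $P_{\text{low,low}}\Lambda$ in an $X^{\frac{1}{2}-}$-type norm comes with a penalty $N$ built into $\N(\Lambda)$. The whole reason for the technical norm $\N$ (with its tailored last term) and for the assumption $2\alpha\beta<1$ is to make the bookkeeping at this step close. Once one fixes $\beta'$ with $\beta<\beta'<\frac{1}{2\alpha}$ sufficiently close to $\beta$, all powers of $N$ combine to a net $N^{-\epsilon}$ and the proposition follows.
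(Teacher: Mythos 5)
Your treatment of the $|\xi+\eta|\gtrsim N^{\beta'}$ piece is correct and is essentially the paper's argument: multiplication by $v_N(x-y)$ preserves the $\xi+\eta$ variable, so the projection passes onto $\Lambda$, at least one $\langle\nabla\rangle^{\alpha}$ lands on $\Lambda$, and the energy estimate plus dual Strichartz and $\frac1N\|v_N\|_{L^{3/2+}}\sim N^{-\epsilon}$ close the estimate in one step. The gap is in the $|\xi-\eta|\gtrsim N^{\beta'}$ case, exactly at the place you flag as "the main obstacle": your proposed way of paying for the low--low contribution does not close numerically. The $N^{-1}$ prefactor on the last term of $\N$ works \emph{against} you, not for you: it only gives $\|\langle\nabla_x\rangle^{\alpha}\langle\nabla_y\rangle^{\alpha}P_{\mathrm{low,low}}c\Lambda\|_{X^{\frac12-}}\le N\,\N(\Lambda)$, i.e. using that term costs a full factor $N$, while the only smallness available from one application of the potential is $\frac1N\|v_N\|_{L^{3/2+}}\sim N^{-(1-\beta)+}$, which is far too weak when $\beta$ is close to $1$. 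The alternative bookkeeping (charge the derivatives to the low frequencies and use the underived term $\|c\Lambda\|_{X^{\frac12-}}$ of $\N$) gives a net power $N^{2\alpha\beta'+\beta-1+}$, which is positive unless $2\alpha\beta'<1-\beta$; the hypothesis $2\alpha\beta<1$ is nowhere near sufficient. So after your one-step trichotomy, the low--low (more precisely, the annulus $|\xi-\eta|\sim N^{\beta'}$ inside it) contribution is left unestimated.

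The paper's mechanism, which is what is missing, is an \emph{iteration in the frequency threshold}. Since $v_N$ shifts $\xi-\eta$ by at most $O(N^{\beta})\ll 2^{I}$, one has $P_{|\xi-\eta|>2^{I}}(v_Nc\Lambda)=P_{|\xi-\eta|>2^{I}}\bigl(v_Nc\,P_{|\xi-\eta|>2^{I-1}}\Lambda\bigr)$, and the dual-Strichartz step yields a bound by $N^{-\epsilon}\|\langle\nabla_x\rangle^{\alpha}\langle\nabla_y\rangle^{\alpha}P_{|\xi-\eta|>2^{I-1}}c\Lambda\|_{X^{\frac12-}}$ — a quantity that is \emph{not} part of $\N$ because the threshold has dropped. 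One then re-substitutes the integral equation $\Lambda=A+B+C$ (this is where the $\Lambda_0$ and $F$ terms enter, at every step, not only once), estimates the new $B$ term at threshold $2^{I-1}$ by the same argument, and repeats $K=I(\beta'-\beta)\sim\log N$ times, until the threshold becomes comparable to $N^{\beta}$ and no derivative can be placed on $\Lambda$ any more. At that point the accumulated factor $N^{-K\epsilon}\le N^{-1}$ (this is where the choice $K\epsilon>1$ and the $N^{-1}$ weight in the last term of $\N$ are designed to match) pays for the crude, unlocalized bound $\|\langle\nabla_x\rangle^{\alpha}\langle\nabla_y\rangle^{\alpha}c\Lambda\|_{X^{\frac12-}}\le N\,\N(\Lambda)$. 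Without this iteration — or some substitute producing roughly $\log N$ factors of $N^{-\epsilon}$ — your argument for the second estimate of the proposition does not go through.
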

 \begin{proof}

We have, using the boundedness of multiplication by $c(t)$ on $X^{\frac{1}{2}-}$,
\begin{align*}
&\|P_{|\xi-\eta|>2^I}<\nabla_x>^{\alpha}<\nabla_y>^{\alpha}c(t)B\|_{X^{\frac{1}{2}-}}
\lesssim \|P_{|\xi-\eta|>2^I}<\nabla_x>^{\alpha}<\nabla_y>^{\alpha}B\|_{X^{\frac{1}{2}+}}
\end{align*}
We will use the fact that the Fourier support of the product of two functions is the algebraic sum of the two supports.
Thus $P_{|\xi-\eta|>2^I}\big(v_N c(s) \Lambda(s)\big)=
P_{|\xi-\eta|>2^I}\big(v_N c(s) P_{|\xi-\eta|>2^{I-1}}\Lambda(s)\big) $, since $v_N$ is supported in $|\xi-\eta|<<2^{I}$.
We  estimate
\begin{align}
& \|P_{|\xi-\eta|>2^I}<\nabla_x>^{\alpha}<\nabla_y>^{\alpha}B\|_{X^{\frac{1}{2}+}}\notag\\
&= \| \frac{1}{N} \int_{-\infty}^{\infty}c(t-s)e^{- i (t-s) \Delta}P_{|\xi-\eta|>2^I}<\nabla_x>^{\alpha}<\nabla_y>^{\alpha}\big(v_N c(s) P_{|\xi-\eta|>2^{I-1}}\Lambda(s) \big)ds\|_{X^{\frac{1}{2}+}}\notag\\
&\lesssim  \frac{1}{N} \|<\nabla_x>^{\alpha}<\nabla_y>^{\alpha}\big(v_N c(s) P_{|\xi-\eta|>2^{I-1}}\Lambda(s)\|_{X^{\left(-\frac{1}{2}\right)+}} \label{B}
\end{align}

In the $L^2$ based space $X^{-\frac{1}{2}+}$ we can distribute the derivatives on the Fourier transform side.  Since at least one of $|\xi|$, $|\eta|$ is $\gtrsim N^{\beta'}$, at least one of $<\nabla_x>^{\alpha}, \, <\nabla_y>^{\alpha}$ lands on $\Lambda$. Denote $ \lfloor \Lambda \rfloor = \F^{-1} |\F \Lambda|$ ($\F$ is the Fourier transform in $x$ and $y$), and recall we assumed $|\hat v| \le \hat w$
for some Schwartz function $w$. We have

\begin{align*}
 & \frac{1}{N} \|<\nabla_x>^{\alpha}<\nabla_y>^{\alpha}\big(v_N c(s) P_{|\xi-\eta|>2^{I-1}}\Lambda(s) \big)\|_{X^{-\frac{1}{2}+}}\\
 &\lesssim  \frac{1}{N}  \|\big((<\nabla_x>^{\alpha}w_N ) P_{|\xi-\eta|>2^{I-1}}<\nabla_y>^{\alpha}\lfloor c(s)\Lambda(s) \rfloor \big)\|_{X^{-\frac{1}{2}+}}\\
 &+ \frac{1}{N} \|\big((<\nabla_y>^{\alpha}w_N ) P_{|\xi-\eta|> 2^{I-1}}<\nabla_x>^{\alpha}\lfloor c(s)\Lambda(s) \rfloor \big)\|_{X^{-\frac{1}{2}+}}\\
 &+ \frac{1}{N} \|w_N c(s) P_{|\xi-\eta|>2^{I-1}}<\nabla_x>^{\alpha}<\nabla_y>^{\alpha}\lfloor c(s)\Lambda(s) \rfloor \|_{X^{-\frac{1}{2}+}}\\
 &\lesssim N^{-\epsilon} \|<\nabla_x>^{\alpha}<\nabla_y>^{\alpha} P_{|\xi-\eta|>2^{I-1}} \lfloor c(s)\Lambda(s) \rfloor\|_{L^{2+}(dt) L^{6-}( d(x-y)) L^2(d(x+y))}\\
 &\lesssim N^{-\epsilon} \|<\nabla_x>^{\alpha}<\nabla_y>^{\alpha} P_{|\xi-\eta|>2^{I-1}} \lfloor c(s)\Lambda(s) \rfloor\|_{X^{\frac{1}{2}-}}\\
 &=N^{-\epsilon} \|<\nabla_x>^{\alpha}<\nabla_y>^{\alpha} P_{|\xi-\eta|>2^{I-1}}  c(s)\Lambda(s) \|_{X^{\frac{1}{2}-}}\\
  \end{align*}
  We have used Holder's inequality and Lemmas \ref{3.1}, \ref{s<12}. For instance,
  \begin{align*}
 & \frac{1}{N} \|w_N P_{|\xi-\eta|>2^{I-1}}<\nabla_x>^{\alpha}<\nabla_y>^{\alpha}\lfloor c(s)\Lambda(s) \rfloor \|_{X^{-\frac{1}{2}+}}\\
 & \lesssim \frac{1}{N} \|w_N P_{|\xi-\eta|>2^{I-1}}<\nabla_x>^{\alpha}<\nabla_y>^{\alpha}\lfloor c(s)\Lambda(s) \rfloor \|_{L^{2-}(dt) L^{\frac{6}{5}+}(d(x-y) L^2(d(x+y)}\\
 & \lesssim \frac{1}{N} \|w_N \|_{L^{\frac{3}{2}+}}\| P_{|\xi-\eta|>2^{I-1}}<\nabla_x>^{\alpha}<\nabla_y>^{\alpha}\lfloor c(s)\Lambda(s) \rfloor \|_{L^{2+}(dt) L^{6-}(d(x-y) L^2(d(x+y)}\\
 &\le N^{-\epsilon} \|<\nabla_x>^{\alpha}<\nabla_y>^{\alpha} P_{|\xi-\eta|>2^{I-1}}  c(s)\Lambda(s) \|_{X^{\frac{1}{2}-}}
\end{align*}
where the numbers are chosen so that $\frac{1}{N} \|w_N \|_{L^{\frac{3}{2}+}}\sim  N^{-\epsilon}$.

 The last term is not part of $\N$ because of $ P_{|\xi-\eta|>2^{I-1}} $. However, we can iterate this result
 essentially $K=I(\beta'-\beta)$ times. The iteration stops when $2^{I-K}$ is comparable to $N^{\beta'}$, the frequency of $v_N$.
Then we cannot put at least one $<\nabla>^{\alpha}$ on $\Lambda$, and the argument breaks down.

Proceeding this way $K$ times,

\begin{align*}
& \|P_{|\xi-\eta|>2^I}<\nabla_x>^{\alpha}<\nabla_y>^{\alpha}B\|_{X^{\frac{1}{2}+}}\\
&\lesssim N^{-\epsilon} \|<\nabla_x>^{\alpha}<\nabla_y>^{\alpha} P_{|\xi-\eta|>2^{I-1}}  c(s)\Lambda(s) \|_{X^{\frac{1}{2}-}}\\
& \lesssim  N^{-\epsilon} \bigg(\|<\nabla_x>^{\alpha} <\nabla_y>^{\alpha} \Lambda_0\|
+ \|<\nabla_x>^{\alpha} <\nabla_y>^{\alpha} c(t) F\|_{X^{-\frac{1}{2}+}}\\
&+\|<\nabla_x>^{\alpha}<\nabla_y>^{\alpha}P_{|\xi-\eta|>2^{I-1}}B\|_{X^{\frac{1}{2}+}}\bigg)\\
&\lesssim   N^{-\epsilon} \bigg(\|<\nabla_x>^{\alpha} <\nabla_y>^{\alpha} \Lambda_0\|
+ \|<\nabla_x>^{\alpha} <\nabla_y>^{\alpha} c(t) F\|_{X^{-\frac{1}{2}+}}\\
&+N^{-\epsilon}\|<\nabla_x>^{\alpha}<\nabla_y>^{\alpha}P_{|\xi-\eta|>2^{I-2}}c(t)\Lambda\|_{X^{\frac{1}{2}-}}\big)  \bigg)\\
&\cdots\\
&\lesssim N^{-\epsilon} \bigg( \|<\nabla_x>^{\alpha} <\nabla_y>^{\alpha} \Lambda_0\|
+ \|<\nabla_x>^{\alpha} <\nabla_y>^{\alpha} c(t) F\|_{X^{-\frac{1}{2}+}}\\
&+ N^{-K \epsilon} \|<\nabla_x>^{\alpha}<\nabla_y>^{\alpha}c(t)\Lambda\|_{X^{\frac{1}{2}-}}\bigg)\\
&\lesssim N^{-\epsilon} \bigg(\|<\nabla_x>^{\alpha} <\nabla_y>^{\alpha} \Lambda_0\|
+ \|<\nabla_x>^{\alpha} <\nabla_y>^{\alpha} c(t) F\|_{X^{-\frac{1}{2}+}}\\
&+N^{-\epsilon} \N(\Lambda)\bigg)
\end{align*}
provided $K \epsilon >1$.
The argument for $ P_{|\xi+\eta|\gtrsim N^{\beta'}}$ is similar, but easier (this closes in one step, no need to iterate).
\end{proof}
Notice that we have in fact shown
\begin{corollary}
\begin{align*}
&\|<\nabla_x>^{\alpha}<\nabla_y>^{\alpha}P_{|\xi-\eta|>2^I}B\|_{X^{\frac{1}{2}+}}\\
&+\|<\nabla_x>^{\alpha}<\nabla_y>^{\alpha}P_{|\xi+\eta|>2^I}B\|_{X^{\frac{1}{2}+}}\\
&\lesssim N^{-\epsilon} \bigg(\|<\nabla_x>^{\alpha} <\nabla_y>^{\alpha} \Lambda_0\|
+ \|<\nabla_x>^{\alpha} <\nabla_y>^{\alpha} c(t) F\|_{X^{-\frac{1}{2}+}} \\
&+N^{-\epsilon} \N(\Lambda)\bigg)
\end{align*}
and thus
\begin{align*}
&\N(P_{|\xi\pm\eta|>2^I}B) \lesssim \|<\nabla_x>^{\alpha}<\nabla_y>^{\alpha}P_{|\xi\pm\eta|>2^I}B\|_{X^{\frac{1}{2}+}}\\
&\lesssim N^{-\epsilon} \bigg(\|<\nabla_x>^{\alpha} <\nabla_y>^{\alpha} \Lambda_0\|
+ \|<\nabla_x>^{\alpha} <\nabla_y>^{\alpha} c(t) F\|_{X^{-\frac{1}{2}+}}\\
&+N^{-\epsilon} \N(\Lambda)\bigg)
\end{align*}
\end{corollary}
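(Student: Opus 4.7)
The plan has two parts, mirroring the two displays of the corollary. The first inequality is essentially already produced at the end of the proof of the preceding proposition: the iteration argument there terminates at exactly the claimed bound on $\|<\nabla_x>^{\alpha}<\nabla_y>^{\alpha}P_{|\xi-\eta|>2^I}B\|_{X^{\frac{1}{2}+}}$, and the corresponding $|\xi+\eta|>2^I$ version was obtained there in one step. I would simply extract and reassemble these two displays, absorbing the $N^{-K\epsilon}\|<\nabla_x>^{\alpha}<\nabla_y>^{\alpha}c(t)\Lambda\|_{X^{\frac{1}{2}-}}$ remainder (with $K\epsilon > 1$) into the $N^{-\epsilon}\N(\Lambda)$ term on the right.

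The genuinely new content is therefore the second inequality, which amounts to the statement that $\N(G) \lesssim \|<\nabla_x>^{\alpha}<\nabla_y>^{\alpha} G\|_{X^{\frac{1}{2}+}}$ for any $G$ with Fourier support in $\{|\xi-\eta|>2^I\} \cup \{|\xi+\eta|>2^I\}$, which I would apply to $G := P_{|\xi\pm\eta|>2^I} B$. I would verify this term by term against the summands in the definition of $\N$. The three pure $X^{\frac{1}{2}-}$ pieces (the two high-frequency $P_{|\xi\pm\eta|\gtrsim N^{\beta'}}$ terms and the plain $\|c(t)\cdot\|_{X^{\frac{1}{2}-}}$) follow from the continuous inclusion $X^{\frac{1}{2}+}\hookrightarrow X^{\frac{1}{2}-}$ combined with the boundedness of multiplication by $c(t)$ on $X^{\frac{1}{2}-}$ and the $L^p$-boundedness of the smooth projectors. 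The mixed Strichartz norms $L^2 L^6 L^2$ and $L^\infty L^2 L^2$ (in both orderings of $x$ and $y$) are supplied by Lemma \ref{3.1} at a Strichartz-admissible pair. The spatial collapsing norm $\sup_w \|<\nabla>^{\alpha} c(t) G(t, x+w, x-w)\|_{L^2(dt\,dx)}$ is handled by Lemma \ref{spacetime}, and the quarter-time-derivative collapsing norm by Proposition \ref{quartertime} applied to $G$ --- noting that $B$ has vanishing initial data and inhomogeneous source $\frac{1}{N} v_N c(s)\Lambda(s)$, and that $P_{|\xi\pm\eta|>2^I}$ commutes with the free propagator, so the proposition applies directly to $G$.

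The three remaining summands in $\N$ all carry the low-low projector $P_{|\xi-\eta|\lesssim N^{\beta'}} P_{|\xi+\eta|\lesssim N^{\beta'}}$ (or, in the final $N^{-1}$-weighted term, the joint variant). Because $G$ is Fourier-supported in $|\xi-\eta|>2^I$ or $|\xi+\eta|>2^I$ with $2^I \sim N^{\beta'}$, the composition of these projectors with $G$ vanishes identically, provided one chooses the smooth cutoff $\varphi$ defining the low projectors so that the low-frequency support is separated from the high-frequency region --- the same one-dyadic-level gap already used in the iteration of the preceding proposition. These three summands therefore contribute zero to $\N(G)$.

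The only real obstacle is the smooth-cutoff bookkeeping required for this disjointness; once the dyadic thresholds in $\varphi$ are matched against those in $P_{|\xi\pm\eta|>2^I}$, no further analytic estimates are needed beyond the Strichartz, collapsing and quarter-time bounds already in hand, and the two inequalities of the corollary fall out cleanly.
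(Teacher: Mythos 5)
Your first display is handled exactly as the paper does it: the proposition's proof already produces those two $X^{\frac{1}{2}+}$ bounds (with the $K$-fold iteration absorbed into $N^{-\epsilon}\N(\Lambda)$), so extracting them is fine. The second display is where the paper simply asserts $\N(P_{|\xi\pm\eta|>2^I}B)\lesssim \|<\nabla_x>^{\alpha}<\nabla_y>^{\alpha}P_{|\xi\pm\eta|>2^I}B\|_{X^{\frac{1}{2}+}}$, and your term-by-term verification is the right idea for most summands (the $X^{\frac{1}{2}-}$ pieces, the Strichartz pieces via Lemma \ref{3.1}, the spatial collapse via Lemma \ref{spacetime}); note that for the mixed $L^pL^qL^2$ and collapse norms you can simply drop $c(t)$ by $|c|\le 1$, or use the $A_2$ remark, rather than worry about boundedness of $c(t)$ on $X^{\frac{1}{2}+}$.

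However, your treatment of the quarter-time collapsing norm has a genuine gap. Proposition \ref{quartertime} applied to $G=P_{|\xi\pm\eta|>2^I}B$ does not give a bound by $\|<\nabla_x>^{\alpha}<\nabla_y>^{\alpha}G\|_{X^{\frac{1}{2}+}}$: its right-hand side is expressed in the \emph{source}, and in particular contains $\frac{1}{N}\|<\nabla_x>^{\alpha}<\nabla_y>^{\alpha-\frac{1}{2}}P(v_N c\Lambda)\|_{X^{-\frac{1}{4}-}}$, which is a \emph{stronger} norm than $X^{-\frac{1}{2}+}$ and is not estimated anywhere in your proposal; a naive attempt (e.g.\ via \ref{midTataru}, which forces an $L^2(dxdy)$ norm of the product and hence an $\frac{1}{N}\|v_N\|_{L^3}\sim N^{2\beta-1}$ factor) degenerates for $\beta$ close to $1$, so this is not a cosmetic omission. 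What is actually needed, and what the paper implicitly uses, is the direct collapsing estimate $\sup_w\||\partial_t|^{\frac{1}{4}}(c(t)G)(t,x+w,x-w)\|_{L^2(dtdx)}\lesssim\|<\nabla_x>^{\alpha}<\nabla_y>^{\alpha}G\|_{X^{\frac{1}{2}+\delta}}$, proved like estimate \ref{higherlambdaX} (equivalently, like the homogeneous part of Proposition \ref{quartertime}, using $|\tau|^{\frac{1}{4}}\lesssim <\tau+|\xi|^2+|\eta|^2>^{\frac{1}{4}}+(|\xi|^2+|\eta|^2)^{\frac{1}{4}}$ and transference), together with the $A_2$ remark to remove $c(t)$. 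Separately, your claim that the low-low projected summands of $\N(G)$ vanish identically is false as stated: with the paper's construction $P_{|\xi-\eta|\gtrsim N^{\beta'}}=1-P_{|\xi-\eta|\lesssim N^{\beta'}}$, the two smooth multipliers necessarily overlap on the annulus $2^I\le|\xi-\eta|\le 2^{I+1}$, and you cannot redefine $\varphi$ to separate the supports while keeping the high projector as the complement. This one is harmless — those summands are bounded directly by the $X^{\frac{1}{2}+}$ norm like the others — but as written it is an incorrect step, and the quarter-time term needs the different argument indicated above.
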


\section{The collapse in the region $|\xi-\eta|\lesssim N^{\beta'}$ and $|\xi+\eta|\lesssim N^{\beta'}$}

Heuristically, in this region the frequencies of $\Lambda$ are less than the frequencies of the potential term, so the worst case scenario is
\begin{align*}
&<\nabla_x>^{\alpha}<\nabla_y>^{\alpha} P_{|\xi\pm\eta|\lesssim N^{\beta'}}\left(\frac{1}{N}v_N \Lambda\right)\\
&\sim P_{|\xi\pm\eta|\lesssim N^{\beta'}}\left(\left(<\nabla_x>^{\alpha}<\nabla_y>^{\alpha}\frac{1}{N}v_N\right) \Lambda\right)\\
&\sim N^{-\epsilon}\delta(x-y) \Lambda(t, \frac{x+y}{2}, \frac{x+y}{2})
\end{align*}
The proof that follows exploits this structure. It will only use the condition$|\xi-\eta|\lesssim N^{\beta'}$.

We will prove

\begin{align*}
  &\sup_z\|<\nabla_x>^{\alpha}(P_{|\xi-\eta|\lesssim N^{\beta'}} B)
  (t, x, x+z)\|_{L^2(dt dx)}\\
   &+ \sup_z\|\bigg|\partial_t\bigg|^{\frac{1}{4}}(P_{|\xi-\eta|\lesssim N^{\beta'}} B)
  (t, x, x+z)\|_{L^2(dt dx)} \lesssim N^{-\epsilon} \N(\Lambda)
\end{align*}

Before estimating the general case
\begin{align*}
B=
  \frac{1}{N} \int_{-\infty}^{\infty}c(t-s)e^{- i (t-s) \Delta}v_N c(s)\Lambda(s) ds
\end{align*}
we consider the special case where
$ v_N \Lambda$ is replaced by $F(t, x, y)=F_z(t, x, y)=\delta(x-y-z)f(t, x+y)$ ($z$ fixed). Then we will use
$f(t, x+y)=f_z(t, x+y)=\Lambda(t, \frac{x+y+z}{2}, \frac{x+y-z}{2})$.
In fact, we suggest the reader takes $v_N \Lambda=\delta(x-y)\Lambda\left(\frac{x+y}{2}, \frac{x+y}{2}\right)$, which, while not literally true, is representative of the rigorous proof.

\begin{lemma} Fix $z$, $w$, let $M=N^{\beta'}$,  and let
\begin{align*}
B_z=
  \frac{1}{N} \int_{-\infty}^{\infty}c(t-s)e^{- i (t-s) \Delta}\delta(x-y-z)c(s)f(s, x+y) ds
\end{align*}

  Then (denoting by $\F(F)$ the Fourier transform in $t$ and $x$, and by $\tilde F$ the Fourier transform in $t$, $x$ and $y$), we have the pointwise estimate, uniformly in $z$ and $w$:
  \begin{align*}
 \bigg| \F\left(P_{|\xi-\eta|< M}( B_z) (t, x+w, x-w)\right)\bigg| (\tau, \xi)
      \lesssim  \frac{ M \log M}{N} | \F (cf)|(\tau, \xi)
      \end{align*}
      which trivially imply the $L^2$ estimates
 \begin{align*}
& \| <\nabla_x>^{\alpha} \bigg(P_{|\xi-\eta|< M}\left(c(t)B_z\right)\bigg) (t, x+w, x-w)\|_{L^2(dt dx)}
      \lesssim \frac{ M \log M}{N} \|<\nabla_x>^{\alpha}(cf)\|_{L^2(dt dx)}\\
      &\|\bigg|\partial_t\bigg|^{\frac{1}{4}}(P_{|\xi-\eta|<M} B)
  (t, x, x+z)\|_{L^2(dt dx)}\lesssim \frac{ M \log M}{N} \|\bigg|\partial_t\bigg|^{\frac{1}{4}}(cf)\|_{L^2(dt dx)}
      \end{align*}
\end{lemma}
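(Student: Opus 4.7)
The plan is to compute the space-time Fourier transform of
$$H(t,x):=P_{|\xi-\eta|<M}(B_z)(t,x+w,x-w)$$
directly and read off the pointwise bound; the two weighted $L^2$ statements then follow at once from Plancherel.

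First, I switch to the rotated frequency variables $\sigma=\xi'+\eta'$, $\rho=\xi'-\eta'$ (dual to $x+y$ and $x-y$). In these variables the spatial Fourier transform of the source $\delta(x-y-z)c(s)f(s,x+y)$ is, up to constants, $c(s)\,e^{-i\rho z/2}\,\widehat f(s,\sigma/2)$, where $\widehat f$ denotes the Fourier transform of $f$ in its spatial argument. The free evolution $e^{-i(t-s)\Delta_{x,y}}$ is multiplication by $e^{-i(t-s)(|\sigma|^2+|\rho|^2)/2}$ (using $|\xi'|^2+|\eta'|^2=(|\sigma|^2+|\rho|^2)/2$), and the cutoff $P_{|\xi-\eta|<M}$ becomes multiplication by $\varphi(|\rho|/M)$. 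Since $\xi'(x+w)+\eta'(x-w)=\sigma\cdot x+\rho\cdot w$, evaluating at $(x+w,x-w)$ and Fourier-transforming in $x$ forces $\sigma=\xi$. Fourier-transforming in $t$ then replaces the convolution against $c(t-s)$ by multiplication by $\widehat c(\tau+(|\xi|^2+|\rho|^2)/2)$. Using $|\widehat c(\tau)|\lesssim 1/(1+|\tau|)$, valid because $c$ is the characteristic function of $[0,T]$ with $T\le 1$, and noting $|e^{i\rho(w-z/2)}|=1$, we arrive at
\begin{align*}
|\mathcal F(H)(\tau,\xi)|\;\lesssim\;\frac{1}{N}\,|\mathcal F(cf)(\tau,\xi/2)|\int_{|\rho|\lesssim M}\frac{d\rho}{1+|\tau+(|\xi|^2+|\rho|^2)/2|},
\end{align*}
uniformly in $w$ and $z$.

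The heart of the argument is the $\rho$-integral, which I expect to be the only place requiring care. Setting $a:=\tau+|\xi|^2/2$ and passing to spherical coordinates with $r=|\rho|$, the substitution $u=r^2/2$ gives
\begin{align*}
\int_0^M\frac{r^2\,dr}{1+|a+r^2/2|}\;\le\;M\int_0^{M^2/2}\frac{du}{1+|a+u|}\;\lesssim\;M\log M,
\end{align*}
uniformly in $a$. This yields the claimed pointwise estimate, and the logarithm is unavoidable because $r^2/(1+|a+r^2/2|)$ has essentially constant integral over each dyadic range of $r$ up to $r\sim M$, forcing the factor $M\log M/N$ rather than $M/N$. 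The two $L^2$ estimates then follow immediately from Plancherel, since $\langle\nabla_x\rangle^\alpha$ and $|\partial_t|^{1/4}$ correspond to multiplication by $\langle\xi\rangle^\alpha$ and $|\tau|^{1/4}$ on the Fourier side; the harmless rescaling $\xi\mapsto 2\xi$ absorbs the $\xi/2$ mismatch on the right-hand side, using $\langle 2\xi\rangle^\alpha\sim\langle\xi\rangle^\alpha$.
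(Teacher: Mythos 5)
Your proof is correct and follows essentially the same route as the paper: compute the space-time Fourier transform of the collapsed function, use that the source $\delta(x-y-z)c(s)f(s,x+y)$ contributes only a unimodular phase in the difference frequency, bound the Duhamel factor by $\langle\tau+|\xi|^2+|\eta|^2\rangle^{-1}$, and reduce to the uniform bound $\int_{|\rho|<M}\frac{d\rho}{1+|A+|\rho|^2|}\lesssim M\log M$. The only difference is notational (rotated frequency variables and the factor-of-two dilation $\xi\mapsto\xi/2$, which you correctly note is harmless for the weighted $L^2$ conclusions).
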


\begin{proof}

Recall
that if
\begin{align*}
B_z=
  \frac{1}{N} \int_{-\infty}^{\infty}c(t-s)e^{- i (t-s) \Delta}F_z(s, \cdot, \cdot) ds
\end{align*}
then
\begin{align*}
|\tilde B_z(\tau, \xi, \eta)|\lesssim \frac{1}{N} \frac{|\tilde F_z(\tau, \xi, \eta)|}{<\tau+|\xi|^2+|\eta|^2>}
\end{align*}

  Also,
  \begin{align*}
  &  \F \left(B_z(t, x+w, x-w)\right)(\tau, \xi)=\int \tilde B_z(\tau, \xi-\frac{\eta}{2},
      \xi+\frac{\eta}{2})e^{i w (\xi-\frac{\eta}{2})}e^{-i w (\xi+\frac{\eta}{2})} d \eta\\
       &  \F \left(P_{|\xi-\eta|<M}(B_z)(t, x+w, x-w)\right)(\tau, \xi)=\int \chi_{|\eta|<M} \tilde B_z(\tau, \xi-\frac{\eta}{2},
      \xi+\frac{\eta}{2})e^{-i w \eta} d \eta
      \end{align*}
  If  $F(t, x, y)=\delta(x-y-z)f(t, x+y)$,
  \begin{align*}
  &  \tilde F(\tau, \xi, \eta)=e^{i  \frac{\xi-\eta}{2} \cdot z}\F f(\tau, \frac{\xi+\eta}{2})\\
  &  \tilde F(\tau, \xi-\frac{\eta}{2},
      \xi+\frac{\eta}{2})e^{-i w \eta}
      =e^{-i w \cdot \eta} e^{-i  \eta \cdot z}\F (cf)(\tau,  \xi)
  \end{align*}
  Putting the above together,
  \begin{align*}
 &   \bigg|\F \bigg( \big(P_{|\xi-\eta| <M }B_z\big)(t, x+w, x-w)\bigg) \bigg|
    (\tau, \xi)\\
 & \lesssim \left( \int \chi_{|\eta| <M} \frac{1}{1+|\tau + 2|\xi|^2 +\frac{1}{2} |\eta|^2|} d \eta\right)|c * \tilde f|(\tau, \xi)
  \end{align*}

  and a direct calculation for the integral,
  \begin{align}
  \int \chi_{|\eta| <M} \frac{1}{1+|A + |\eta|^2|} d \eta \lesssim M \log M \label{directcal}
  \end{align}
(uniformly in $A$)  gives the pointwise result which, in turn, implies the $L^2$ results.
\end{proof}

Now we apply the above to the true $B$.

  \begin{proposition}
  Write
  \begin{align*}
& \frac{1}{N}  v_N(x-y)\Lambda(t, x, y)=\frac{1}{N} \int v_N(z)\delta(x-y-z)\Lambda(t, \frac{x+y+z}{2}, \frac{x+y-z}{2})\\
& =\frac{1}{N} \int v_N(z)\delta(x-y-z)f_z(t, x+y)\\
\end{align*}
and recall
\begin{align*}
  B= \frac{1}{N}
  \int_{-\infty}^{\infty}c(t-s)e^{- i (t-s) \Delta}c(s)  v_N(x-y)\Lambda(s, x, y) ds
\end{align*}
Then we have
\begin{align*}
&\sup_w\|<\nabla>^{\alpha}  \bigg(P_{|\xi-\eta|<M}c(t)B\bigg) (t, x+w, x-w)\|_{L^2(dt dx)}\\
&\lesssim \frac{M \log M}{N}
  \sup_w\|<\nabla>^{\alpha}  c(t)\Lambda\bigg) (t, x+w, x-w)\|_{L^2(dt dx)}\\
&  \lesssim \frac{M \log M}{N} \N(\Lambda) \lesssim N^{-\epsilon} \N(\Lambda)
\end{align*}
if $M=N^{\beta'}$.
Similarly,
\begin{align*}
&\sup_w\| \bigg|\partial_t\bigg|^{\frac{1}{4}}\bigg(P_{|\xi-\eta|<M}c(t)B\bigg) (t, x+w, x-w)\|_{L^2(dt dx)}\\
&\lesssim \frac{M \log M}{N}
  \sup_w\|\big|\partial_t\big|^{\frac{1}{4}} ( c(t)\Lambda)\bigg) (t, x+w, x-w)\|_{L^2(dt dx)}\\
&   \lesssim N^{-\epsilon} \N(\Lambda)
\end{align*}
  \end{proposition}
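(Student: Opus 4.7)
The plan is to reduce the Proposition to the preceding Lemma by a superposition argument over $z$. Starting from
\begin{align*}
\frac{1}{N}v_N(x-y)\Lambda(t,x,y) = \frac{1}{N}\int v_N(z)\,\delta(x-y-z)\,f_z(t,x+y)\,dz,
\end{align*}
and using linearity of the Duhamel map producing $B$, I would write $B=\int v_N(z)\,B_z\,dz$, where each $B_z$ is exactly the object estimated in the Lemma applied with $f=f_z$.

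For each fixed $z$ and $w$, the Lemma gives, uniformly in both variables,
\begin{align*}
\|<\nabla>^{\alpha}\big(P_{|\xi-\eta|<M}c(t)B_z\big)(t,x+w,x-w)\|_{L^2(dt\,dx)} \lesssim \frac{M\log M}{N}\,\|<\nabla>^{\alpha}(cf_z)\|_{L^2(dt\,dr)}.
\end{align*}
Applying Minkowski's inequality to pass the $z$-integral outside the $L^2(dt\,dx)$ norm majorizes the quantity of interest by
\begin{align*}
\frac{M\log M}{N}\int|v_N(z)|\,\|<\nabla>^{\alpha}(cf_z)\|_{L^2(dt\,dr)}\,dz.
\end{align*}
Next, the affine change of variables $r=2x$, $w'=z/2$ identifies $f_z(t,2x)=\Lambda(t,x+w',x-w')$ with a unit Jacobian constant; consequently, $\|<\nabla>^{\alpha}(cf_z)\|_{L^2(dt\,dr)}$ is bounded by $\sup_{w'}\|<\nabla>^{\alpha}c(t)\Lambda(t,x+w',x-w')\|_{L^2(dt\,dx)}$, which is one of the summands of $\N(\Lambda)$. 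Since $\|v_N\|_{L^1}=\|v\|_{L^1}$ is independent of $N$, I would pull this sup out of the $z$-integral and then take $\sup_w$ on the left to obtain the claimed bound.

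With $M=N^{\beta'}$ and $\beta'<1$, the prefactor satisfies $\frac{M\log M}{N}\lesssim N^{\beta'-1}\log N\lesssim N^{-\epsilon}$ for a small $\epsilon>0$, completing the first estimate. The $|\partial_t|^{1/4}$ estimate is obtained by the identical three-step argument (superposition, Minkowski, change of variables), invoking instead the second bound of the Lemma and recognizing $\sup_w\||\partial_t|^{1/4}c(t)\Lambda(t,x+w,x-w)\|_{L^2}$ as a further summand of $\N(\Lambda)$. The substantive analytic work is already encapsulated in the Lemma through the one-dimensional integral bound \eqref{directcal}, so the main obstacle has already been cleared; the only point requiring care here is verifying that the affine relation $f_z(t,r)=\Lambda(t,(r+z)/2,(r-z)/2)$ produces no extra norm-dependent constant in the change of variables, which is immediate from translation invariance of $<\nabla>^{\alpha}$ and the unit Jacobian of the rescaling.
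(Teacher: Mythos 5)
Your proposal is correct and follows essentially the same route as the paper: the paper likewise obtains the Proposition by writing $B=\int v_N(z)\,B_z\,dz$, applying the preceding Lemma uniformly in $z$ and $w$, and using $\int|v_N|=\int|v|=O(1)$ together with $M\log M/N\lesssim N^{-\epsilon}$ for $M=N^{\beta'}$, $\beta'<1$. The only slip is cosmetic: the change of variables $r=2x$ has Jacobian $2^3$ (and turns $<\nabla_r>^{\alpha}$ into $<\nabla_x/2>^{\alpha}$), not a unit Jacobian, but these are fixed constants independent of $N$, $z$, $w$ and do not affect the argument.
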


 \section{Estimates for $\|<\nabla_x>^{\alpha}<\nabla_y>^{\alpha}P_{|\xi-\eta|\lesssim N^{\beta'}}P_{|\xi+\eta|\lesssim N^{\beta'}}c(t)B\|_{L^{2}(dt) L^{6}( dx )L^2(dy)}$
 and $\|<\nabla_x>^{\alpha}<\nabla_y>^{\alpha}P_{|\xi-\eta|\lesssim N^{\beta'}}P_{|\xi+\eta|\lesssim N^{\beta'}}c(t)B\|_{L^{\infty}(dt) L^{2}( dx )L^2(dy)}$
 }

In this section we prove
\begin{proposition}
\begin{align*}
&\|<\nabla_x>^{\alpha}<\nabla_y>^{\alpha}P_{|\xi-\eta|\lesssim N^{\beta'}}P_{|\xi+\eta|\lesssim N^{\beta'}}c(t)B\|_{L^{2}(dt) L^{6}( dx )L^2(dy)}\\
&+ \|<\nabla_x>^{\alpha}<\nabla_y>^{\alpha}P_{|\xi-\eta|\lesssim N^{\beta'}}P_{|\xi+\eta|\lesssim N^{\beta'}}c(t)B\|_{L^{\infty}(dt) L^{2}( dx )L^2(dy)}\\
&\lesssim N^{-\epsilon}\left(\sup_z\||\partial_t|^{\frac{1}{4}}\Lambda(t, x, x+z)\|_{L^2(dt dx)}+
\sup_z\|<\nabla_x>^{\alpha}\Lambda(t, x, x+z)\|_{L^2(dt dx)}\right)
\end{align*}
\end{proposition}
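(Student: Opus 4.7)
The plan is to use the Strichartz embedding combined with the $X^b$ energy estimate to reduce the Strichartz norms of $PB$ to an $X^{-1/2+}$ bound on the source $\frac{1}{N}v_N c\Lambda$, and then to exploit the frequency localization of $P$ together with the near-diagonal concentration of $v_N$ via a Leibniz redistribution of derivatives.

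First, by Lemma \ref{3.1} (the $X^{1/2+}$ Strichartz embedding into both $L^2 L^6 L^2$ and $L^\infty L^2 L^2$) and the energy estimate of Lemma \ref{energy},
\begin{align*}
&\|<\nabla_x>^\alpha<\nabla_y>^\alpha P c(t) B\|_{L^2 L^6 L^2 \cap L^\infty L^2 L^2}\\
&\lesssim \|<\nabla_x>^\alpha<\nabla_y>^\alpha P c(t) B\|_{X^{1/2+}}
\lesssim \frac{1}{N}\|<\nabla_x>^\alpha<\nabla_y>^\alpha P(v_N c\Lambda)\|_{X^{-1/2+}}.
\end{align*}
Because $P$ localizes to $|\xi|,|\eta|\lesssim N^{\beta'}$ while $\hat v_N$ is supported at scale $N^\beta\ll N^{\beta'}$, a Leibniz/paraproduct decomposition places the $2\alpha$ derivatives either on $v_N$ (producing the prefactor $\frac{1}{N}\||\nabla|^{2\alpha}v_N\|_{L^1(dz)}\sim N^{2\alpha\beta-1}=N^{-\epsilon}$ by the hypothesis $2\alpha\beta<1$), or on $\Lambda$ (producing $\frac{1}{N}\|v_N\|_{L^1}\,N^{2\alpha\beta'}\sim N^{2\alpha\beta'-1}=N^{-\epsilon}$ for $\beta'$ sufficiently close to $\beta$), or in intermediate combinations, all of which yield $N^{-\epsilon}$.

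Next, I would dualize the $X^{-1/2+}$ norm against test functions $\phi\in X^{1/2-}$. Using
\[
\langle v_N c\Lambda, P\phi\rangle = \int v_N(z)\bigg(\int c\Lambda(t,x,x-z)\,\overline{P\phi(t,x,x-z)}\,dt\,dx\bigg)\,dz,
\]
I apply Plancherel in $(t,x)$ together with the $X^{1/4}_{3D}\times X^{-1/4}_{3D}$ duality on the affine diagonal slice $\{y=x-z\}$. The first factor satisfies
\[
\|c\Lambda(t,x,x-z)\|_{X^{1/4}_{3D}} \lesssim \||\partial_t|^{1/4} c\Lambda(t,x,x-z)\|_{L^2(dtdx)}+\|<\nabla_x>^{1/2} c\Lambda(t,x,x-z)\|_{L^2(dtdx)}
\]
from $\langle\tau+|\mu|^2\rangle^{1/4}\lesssim|\tau|^{1/4}+|\mu|^{1/2}$, and hence is dominated by the two collapsing norms on the right-hand side of the proposition using $\alpha\ge 1/2$. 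The second factor $\|P\phi(t,x,x-z)\|_{X^{-1/4}_{3D}}$ is controlled uniformly in $z$ by $\|\phi\|_{X^{1/2-}}$ through a Cauchy--Schwarz argument on the $\eta$-integral defining $\widehat{P\phi_z}$, exploiting the frequency restriction $|\eta|\lesssim N^{\beta'}$ and the modulation weight of $X^{1/2-}$.

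The principal obstacle is the sharp bookkeeping in this last dual estimate: a naive Cauchy--Schwarz on the $\eta$-integral introduces an additional factor $N^{\beta'/2+}$, which when combined with the $N^{2\alpha\beta-1}$ gain produces $N^{\beta'(2\alpha+1/2)-1+}$; this is $N^{-\epsilon}$ only in the regime $\beta<2/3$. To cover the full range $\beta<1$, one must decompose both $\phi$ and $\Lambda$ dyadically according to the modulation size $\langle\tau+|\xi|^2+|\eta|^2\rangle$. The norm $\sup_z\||\partial_t|^{1/4}\Lambda(t,x,x-z)\|_{L^2}$ in the RHS is specifically included to absorb the $N^{\beta'/2+}$ loss on the high-modulation pieces, where the pure spatial collapsing bound is too weak; the estimate then closes by dyadic summation of the two complementary bounds across modulation scales.
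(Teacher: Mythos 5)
Your first reduction is where the argument breaks, and it cannot be repaired by the later bookkeeping. You bound the two Strichartz norms by $\|\langle\nabla_x\rangle^{\alpha}\langle\nabla_y\rangle^{\alpha}Pc(t)B\|_{X^{1/2+}}$ and then, via Lemma \ref{energy}, by $\frac{1}{N}\|\langle\nabla_x\rangle^{\alpha}\langle\nabla_y\rangle^{\alpha}P(v_Nc\Lambda)\|_{X^{-1/2+}}$. That last quantity is not $O(N^{-\epsilon})$ times the collapsing norms; it is not even bounded uniformly in $N$ once $(2\alpha+\tfrac12)\beta>1$, i.e.\ precisely in the new regime $2/3\lesssim\beta<1$. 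Take $v$ with $\hat v\ge c>0$ near the origin and a fixed low-frequency $\Lambda$ (say $\widetilde{c\Lambda}$ a unit bump on $|\tau|,|\xi|,|\eta|\lesssim 1$), so that every collapsing norm on your right-hand side is $O(1)$. Multiplication by $v_N(x-y)$ spreads the Fourier support to $|\xi-\eta|\lesssim N^{\beta}$ with $|\widetilde{v_Nc\Lambda}|\gtrsim 1$ there, where the modulation is $\sim|\xi-\eta|^{2}$ and the derivative weights are $\sim|\xi-\eta|^{2\alpha}$; hence
\begin{align*}
\frac1N\|\langle\nabla_x\rangle^{\alpha}\langle\nabla_y\rangle^{\alpha}P(v_Nc\Lambda)\|_{X^{-\frac12+\delta}}
\gtrsim \frac1N\Big(\int_{1\le|\zeta|\le N^{\beta}/2}|\zeta|^{4\alpha}\langle\zeta\rangle^{-2+4\delta}\,d\zeta\Big)^{1/2}
\sim N^{(2\alpha+\frac12)\beta-1+2\delta\beta},
\end{align*}
which diverges. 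Since $|\hat c(u)|\sim|u|^{-1}$ on average, the same computation shows $\|\langle\nabla_x\rangle^{\alpha}\langle\nabla_y\rangle^{\alpha}Pc(t)B\|_{X^{1/2+}}$ itself grows in $N$: the quantity you reduced to is genuinely larger than the one to be proved, which is exactly the point the paper makes (Remark following Theorem \ref{mnthm}, and the term $D_{z,2}$ in Lemma \ref{1/4}: that piece of $B$ lies only in $X^{\frac14+}$ uniformly in $N$, not in $X^{\frac12}$).

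Your proposed rescue--dyadic decomposition in modulation, with the $|\partial_t|^{1/4}$ collapsing norm absorbing the $N^{\beta'/2+}$ loss--cannot close, because the loss is produced in the region where $\tau$ and $\xi+\eta$ (the only variables the collapsed function $f_z$ depends on) are $O(1)$ while the transverse frequency $\xi-\eta$, injected by the potential, is $\sim N^{\beta'}$; there neither $|\partial_t|^{1/4}$ nor $\langle\nabla_x\rangle^{\alpha}$ of the collapse gives any gain, so no weight placed on $\Lambda$ (or on the dual test function) can compensate the divergent $\xi-\eta$ integral. The paper's proof changes the functional framework for exactly this piece: after the reduction $B=\int v_N(z)B_z\,dz$ (which is in the same spirit as your $z$-integral in the dual pairing), it splits $D_z=D_{z,0}+D_{z,1}+D_{z,2}$; the pieces where $|\tau|$ or $2|\tau|+|\xi+\eta|^{2}$ dominates the modulation are estimated in $X^{\frac12+}$ (and there the $|\partial_t|^{1/4}$ and spatial collapsing norms do pay off), while $D_{z,2}$ is estimated only in $X^{\frac14+}$, recovering $L^{2}_tL^{6}_xL^{2}_y$ through the extra $\langle\nabla_x+\nabla_y\rangle^{\alpha}$ regularity together with the ``Sobolev at an angle'' Lemma \ref{angle} and Lemma \ref{s<12}, and $L^{\infty}_tL^{2}$ through $\langle\partial_t\rangle^{1/4}$ in $X^{\frac14+}$. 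That switch to $X^{\frac14+}$ with regularity in the $x+y$ and $t$ directions for the low-$(\tau,\xi+\eta)$, high-$(\xi-\eta)$ region is the missing idea in your proposal.
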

\noindent thus completing the proof of Theorem \ref{mnthmlong}.
 Again, we prove a special case first
 \begin{lemma} \label{1/4} Fix $z$ and let
\begin{align*}
B_z(t, x, y)=
  \frac{1}{N} \int_{-\infty}^{\infty}c(t-s)e^{- i (t-s) \Delta}\delta(x-y-z)f_z(s, x+y) ds
\end{align*}
Then

\begin{align*}
&\| <\nabla_x >^{\alpha}  <\nabla_y >^{\alpha} P_{|\xi-\eta|\lesssim N^{\beta'}}
 P_{|\xi+\eta|\lesssim N^{\beta'}}
B_z\|_{L^{2}L^{6}L^2}\\
&+\| <\nabla_x >^{\alpha}  <\nabla_y >^{\alpha} P_{|\xi-\eta|\lesssim N^{\beta'}}
 P_{|\xi+\eta|\lesssim N^{\beta'}}
B_z\|_{L^{\infty}L^2L^2}\\
&\lesssim  N^{-\epsilon}\left( \|c(t)  <\nabla_x >^{\alpha} f_z\|_{L^2(dt dx)}
+\|\big|\partial_t\big|^{\frac{1}{4}}(cf_z)\|_{L^2(dt dx)}\right)
\end{align*}
\end{lemma}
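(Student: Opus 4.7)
The plan is to compute the spacetime Fourier transform of $B_z$ explicitly and bound it in $X^{1/2+\epsilon_0}$, from which the two Strichartz-type norms on the LHS follow at once by Lemma \ref{3.1} (applied to the pairs $(p,q)=(2,6)$ and $(\infty,2)$, which are 3D-admissible).

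By the calculation that produced Lemma \ref{energy}, applied to the tensor-product source $c(s)\delta(x-y-z)f_z(s,x+y)$, one gets the pointwise Fourier bound
\begin{equation*}
|\tilde B_z(\tau,\xi,\eta)| \lesssim \frac{|\widetilde{cf_z}(\tau,\rho)|}{N\,\langle\mu\rangle},
\qquad \mu=\tau+2|\rho|^2+2|\sigma|^2,
\end{equation*}
where $\rho=(\xi+\eta)/2$ and $\sigma=(\xi-\eta)/2$; the two projections restrict to $|\rho|,|\sigma|\lesssim M:=N^{\beta'}$. Using the elementary inequality $\langle\xi\rangle^\alpha\langle\eta\rangle^\alpha\lesssim\langle\rho\rangle^{2\alpha}+\langle\sigma\rangle^{2\alpha}$, I split $B_z=D_{z,1}+D_{z,2}$ according to whether $\langle\rho\rangle\geq\langle\sigma\rangle$ or not.

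For $D_{z,1}$ I would factor $\langle\rho\rangle^{2\alpha}=\langle\rho\rangle^\alpha\cdot\langle\rho\rangle^\alpha$: one $\langle\rho\rangle^\alpha$ goes onto $\widetilde{cf_z}$, invoking the hypothesis $\|c(t)\langle\nabla_x\rangle^\alpha f_z\|_{L^2}$, while the other is absorbed as $\langle\rho\rangle^\alpha\leq M^\alpha$ using the projection. A standard radial/resonance computation shows
$\int_{|\sigma|<|\rho|\wedge M}\langle\mu\rangle^{-1+2\epsilon_0}\,d\sigma\lesssim |\rho|^{1+O(\epsilon_0)}\leq M^{1+O(\epsilon_0)}$,
with the possible resonance at $|\sigma|^2\sim|\tau+2|\rho|^2|/2$ handled by integrating against the linear vanishing of $\mu$. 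This gives
\begin{equation*}
\|\langle\xi\rangle^\alpha\langle\eta\rangle^\alpha D_{z,1}\|_{X^{1/2+\epsilon_0}}^2
\lesssim \frac{M^{2\alpha+1+O(\epsilon_0)}}{N^2}\,\|\langle\nabla_x\rangle^\alpha cf_z\|_{L^2}^2,
\end{equation*}
and since $\beta'(2\alpha+1)=2\alpha\beta'+\beta'<1+1=2$ by the assumptions $2\alpha\beta'<1$ and $\beta'<1$, this gives an $N^{-\epsilon}$ gain.

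For $D_{z,2}$ the factor $\langle\sigma\rangle^{2\alpha}$ cannot be moved onto $\widetilde{cf_z}$ (which is a function of $\rho$ only), so the $|\partial_t|^{1/4}$ hypothesis becomes essential: I write $|\widetilde{cf_z}|^2=G^2/|\tau|^{1/2}$ with $G=|\tau|^{1/4}|\widetilde{cf_z}|$ of $L^2$-norm $\||\partial_t|^{1/4}cf_z\|_{L^2}$. In the resonant regime $|\mu|\ll|\tau|$, the modulation condition forces $|\tau|\sim 2(|\rho|^2+|\sigma|^2)$, which in $D_{z,2}$ (where $|\sigma|\geq|\rho|$) gives $|\tau|\sim|\sigma|^2$ and hence the crucial gain $|\widetilde{cf_z}|^2\leq G^2/|\sigma|$. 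The inner $\sigma$-integral of $\langle\sigma\rangle^{4\alpha-1}/\langle\mu\rangle^{1-2\epsilon_0}$ over the resonance shell has thickness $\sim 1/|\sigma|$, area $\sim|\sigma|^2$, and integrand $\sim|\sigma|^{4\alpha-1}$, contributing $\lesssim M^{4\alpha+O(\epsilon_0)}$. In the complementary non-resonant regime, the bound $\langle\mu\rangle\gtrsim|\sigma|^2$ supplies $\langle\mu\rangle^{-1+2\epsilon_0}\lesssim\langle\sigma\rangle^{-2+4\epsilon_0}$, leading to the same effective exponent. Altogether
\begin{equation*}
\|\langle\xi\rangle^\alpha\langle\eta\rangle^\alpha D_{z,2}\|_{X^{1/2+\epsilon_0}}^2
\lesssim \frac{M^{4\alpha+O(\epsilon_0)}}{N^2}\,\||\partial_t|^{1/4}cf_z\|_{L^2}^2,
\end{equation*}
and the exponent $\beta'\cdot 4\alpha<2$ is precisely the hypothesis $2\alpha\beta'<1$.

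The main obstacle is the sharp analysis of the resonant $\sigma$-integral in $D_{z,2}$: one must verify that the combination of shell thickness $\sim 1/|\sigma|$, shell area $\sim |\sigma|^2$, integrand $\langle\sigma\rangle^{4\alpha-1}$, and the extra factor $1/|\sigma|$ from the substitution $|\widetilde{cf_z}|^2=G^2/|\tau|^{1/2}$ collapses to $M^{4\alpha+O(\epsilon_0)}$ (rather than the naive $M^{4\alpha+1}$), so that the hypothesis $2\alpha\beta'<1$ is exactly strong enough; the very low-time-frequency piece $|\tau|\lesssim 1$ requires a separate small modification in which one falls back on the $\langle\nabla_x\rangle^\alpha$ control via $|\widetilde{cf_z}|^2\leq\langle\rho\rangle^{-2\alpha}F^2$ with $F=\langle\rho\rangle^\alpha|\widetilde{cf_z}|$. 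Combining the two pieces and applying Lemma \ref{3.1} yields the claimed estimate.
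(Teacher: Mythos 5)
Your treatment of $D_{z,1}$ and of the resonant part of $D_{z,2}$ is sound, but the plan hinges on an intermediate claim that is false in exactly the regime this lemma is for: that all of $\langle\nabla_x\rangle^{\alpha}\langle\nabla_y\rangle^{\alpha}P_{|\xi\pm\eta|\lesssim N^{\beta'}}B_z$ can be placed in $X^{\frac12+\epsilon_0}$ uniformly in $N$. The problem sits in the non-resonant part of your $D_{z,2}$, namely $|\tau|+|\rho|^2\ll|\sigma|^2$ with $|\sigma|$ up to $M=N^{\beta'}$ (in your notation $\rho=(\xi+\eta)/2$, $\sigma=(\xi-\eta)/2$, $\mu=\tau+2|\rho|^2+2|\sigma|^2$). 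There $\langle\mu\rangle\sim\langle\sigma\rangle^{2}$, so the inner integral is $\int_{|\sigma|\lesssim M}\langle\sigma\rangle^{4\alpha}\langle\mu\rangle^{-1+2\epsilon_0}\,d\sigma\sim M^{4\alpha+1+O(\epsilon_0)}$, not $M^{4\alpha+O(\epsilon_0)}$: your assertion of "the same effective exponent" silently drops a full power of $M$. Crucially, in this region $\tau$ and $\rho$ can both be $O(1)$, so neither the substitution $|\widetilde{cf_z}|^2=G^2/|\tau|^{1/2}$ nor your fallback $|\widetilde{cf_z}|^2\le\langle\rho\rangle^{-2\alpha}F^2$ extracts any decay to trade against the extra $M$; the low-$|\tau|$ modification you propose is empty when $|\rho|\lesssim1$. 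You would need $(4\alpha+1)\beta'<2$, which is strictly stronger than $2\alpha\beta'<1$ and fails for $\beta$ close to $1$ (e.g.\ $\beta'\approx 0.9$, $\alpha$ slightly above $\tfrac12$). Since $|\hat c(\mu)|\sim|\mu|^{-1}$ on most of its range, this is not merely a lossy upper bound: for data concentrated near $\tau\approx0$, $\rho\approx0$ the $X^{\frac12+}$ norm of this piece genuinely has size $\sim M^{(4\alpha+1)/2}/N$ times the data, so the reduction to $X^{\frac12+\epsilon_0}$ plus Lemma \ref{3.1} cannot close for $\tfrac23\lesssim\beta<1$ (for $\beta<\tfrac23$ your argument does work, and is essentially the perturbative $X^{\frac12}$ treatment of the earlier paper \cite{GM3}).

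This is precisely the point where the paper departs from a pure $X^{\frac12+}$ strategy: it isolates this piece (its $D_{z,2}$, cut out by $|\xi-\eta|^2>2(2|\tau|+|\xi+\eta|^2)$ and $|\tau|\lesssim N^{2\beta''}$) and estimates it only in $X^{\frac14+}$, where the weaker weight turns the $\sigma$-integral back into $N^{4\alpha\beta'-2+}\le N^{-\epsilon}$ under the sole hypothesis $2\alpha\beta'<1$. The price is that $X^{\frac14+}$ alone does not give the two Strichartz norms, so they are recovered from the extra regularity this term has in the good directions: $\langle\nabla_x+\nabla_y\rangle^{\alpha}$ combined with the "Sobolev at an angle" Lemma \ref{angle} for the $L^2L^6L^2$ bound, and $\langle\partial_t\rangle^{1/4}$ for the $L^\infty L^2L^2$ bound — which is exactly why the right-hand side of the lemma carries $\|c(t)\langle\nabla_x\rangle^{\alpha}f_z\|_{L^2}$ and $\||\partial_t|^{1/4}(cf_z)\|_{L^2}$ rather than an $X^{\frac12}$-type norm of the source. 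Your proposal is missing this idea, and without it the stated estimate is not reached in the full range $\beta<1$.
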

\begin{proof}
Let
\begin{align*}
D_z= <\nabla_x >^{\alpha}  <\nabla_y >^{\alpha} P_{|\xi-\eta|\lesssim N^{\beta'}}
 P_{|\xi+\eta|\lesssim N^{\beta'}}
B_z
\end{align*}

We have a pointwise estimate for the Fourier transform
\begin{align*}
|\tilde D_z(\tau, \xi, \eta)| \lesssim \frac{N^{2 \beta' \alpha -1}}{<\tau + |\xi|^2 + |\eta|^2>} \chi(|\xi|+|\eta|\le C N^{\beta'})
 |\tilde f_z(\tau, \xi+\eta)|
\end{align*}
Fix $\beta'<\beta''<1$, with $\beta''-\beta'$ sufficiently small.

Break up $  D_z= D_{z, 0}+ D_{z, 1} + D_{z, 2}$ where
\begin{align*}
&|\tilde D_{z, 0}|\lesssim \\
&\bigg|\frac{N^{2 \beta' \alpha -1}\tilde f(\tau, \xi + \eta)}{<2\tau + |\xi+\eta|^2 +|\xi-\eta|^2>}\chi(|\xi|+|\eta|\le C N^{\beta'}, |\tau|> 10 C N^{2\beta''})\bigg|\\
&|\tilde D_{z, 1}|\lesssim \\
&\bigg|\frac{N^{2 \beta' \alpha -1}\tilde f(\tau, \xi + \eta)}{<2\tau + |\xi+\eta|^2 +|\xi-\eta|^2>}\chi_{|\xi-\eta|^2< 2 \left(2|\tau|+ |\xi+\eta|^2\right)}\chi(|\xi|+|\eta|\le C N^{\beta'}, |\tau|< 10 C N^{2\beta''}))\bigg|\\
&|\tilde D_{z, 2}|\\
&\lesssim \bigg|\frac{N^{2 \beta' \alpha -1}\tilde f(\tau, \xi + \eta)}{<2\tau + |\xi+\eta|^2 +|\xi-\eta|^2>}\chi_{|\xi-\eta|^2> 2 \left(2|\tau|+ |\xi+\eta|^2\right)}\chi(|\xi|+|\eta|\le C N^{\beta'}, |\tau|< 10 C N^{2\beta''})\bigg|
\end{align*}
All these terms can be estimated  in $X^{\frac{1}{2}+}$ or $X^{\frac{1}{4}+}$, by integrating with respect to $\xi-\eta$ first.

Explicitly, we write
\begin{align*}
  &\|
  \tilde D_{z, i}\|_{X^b}\lesssim \int|K(\tau, \xi + \eta,  \xi - \eta) f(\tau, \xi+\eta)|^2 d \tau d\xi d\eta\\
&=\int \left(\int |K(\tau, \xi + \eta,  \xi - \eta) |^2d (\xi-\eta)\right)|f(\tau, \xi+\eta)|^2 d \tau (d(\xi+\eta))
\end{align*}
(with a suitable choice of $K$) and compute the inner integral.

To show $D_{z, 0}\in X^{\frac{1}{2}+}$ for some number $\frac{1}{2}+$ slightly bigger that $\frac{1}{2}$,
\begin{align*}
\bigg(\int
&\bigg|\frac{<\tau+ |\xi|^2+|\eta|^2>^{\frac{1}{2}+} N^{2 \beta' \alpha -1}}{<2\tau + |\xi+\eta|^2 +|\xi-\eta|^2>}\chi(|\xi|+|\eta|\le 2 N^{\beta'}, |\tau|> 10 N^{2\beta''})\bigg|^2 d(\xi-\eta) \bigg)^{\frac{1}{2}}\\
&\lesssim N^{-\epsilon}|\tau|^{\frac{1}{4}}
\end{align*}
thus
\begin{align*}
\|D_{z, 0}\|_{L^{2}(dt)L^{6}(dx)L^2(dy)}+\|D_{z, 0}\|_{L^{\infty}(dt)L^{2}(dx)L^2(dy)}
&
\lesssim
\|D_{z, 0}\|_{X^{\frac{1}{2}+}} \lesssim N^{-\epsilon}\||\tau|^{\frac{1}{4}} \tilde f\|_{L^2(d \tau d \xi)}
\end{align*}

The argument for $\tilde D_{z, 1}$ is identical, we get (using the calculation \ref{directcal})
\begin{align*}
\|| D_{z, 1}\|_{X^{\frac{1}{2}+}} \lesssim N^{-\epsilon} \| |\left(|\tau| + |\xi|^2\right)^{\frac{1}{4}}\tilde f\|_{L^2(d\tau d\xi)}
\end{align*}

The main difference occurs in estimating $D_{z, 2}$. This term can only  be estimated (uniformly in $N$)in $X^{\frac{1}{4}+}$, not in
 $X^{\frac{1}{2}}$, but we have extra regularity in $x+y$ and $t$. We  have
\begin{align*}
&\int \bigg|\frac{N^{2 \beta' \alpha -1}<\tau+|\xi|^2+|\eta|^2>^{\frac{1}{4}+}}{<2\tau + |\xi+\eta|^2 +|\xi-\eta|^2>}\chi_{|\xi-\eta|^2> 2 \left(2|\tau|+ |\xi+\eta|^2\right)}\chi(|\xi|+|\eta|\le 2 N^{\beta'}, |\tau|< 10 N^{2\beta''})\bigg|^2 \\
&d(\xi-\eta)
\lesssim N^{-\epsilon}
\end{align*}
thus, using the "Sobolev at an angle" (see Lemma \ref{angle}) estimate and standard $X^{\frac{1}{4}+}$ estimates
\begin{align*}
&\|| D_{z, 2}\|_{L^{2}(dt)L^{6}(dx)L^2(dy)}\\
& \lesssim
\|<\nabla_x+\nabla_y>^{\alpha} D_{z, 2}\|_{L^{2+}(dt)L^{3-}(dx)L^2(dy)}\\
&\lesssim\|<\nabla_x+\nabla_y>^{\alpha} D_{z, 2}\|_{X^{\frac{1}{4}+}} \lesssim N^{-\epsilon} \| | <\xi>^{\alpha}\tilde f(\tau, \xi)\|_{L^2(d\tau d\xi)}
\end{align*}
To obtain $L^{\infty}(dt)$ estimates, we use
\begin{align*}
&\|| D_{z, 2}\|_{L^{\infty}(dt)L^2(dx)L^2(dy)}\\
&\lesssim\|<\partial_t>^{\frac{1}{4}} D_{z, 2}\|_{X^{\frac{1}{4}+}} \lesssim N^{-\epsilon} \| |<\tau>^{\frac{1}{4}}\tilde f(\tau, \xi)\|_{L^2(d\tau d\xi)}
\end{align*}

\end{proof}
The general case follows by taking  $f_z(x+y)=\Lambda(\frac{x+y+z}{2}, \frac{x+y-z}{2})
$
and writing
\begin{align*}
B= \int v_N(z) B_z dz
\end{align*}
We get
\begin{align*}
&\|<\nabla_x>^{\alpha}<\nabla_y>^{\alpha}P_{|\xi-\eta|\lesssim N^{\beta'}}P_{|\xi+\eta|\lesssim N^{\beta'}}c(t)B\|_{L^{2}(dt) L^{6}( dx )L^2(dy)}\\
&+ \|<\nabla_x>^{\alpha}<\nabla_y>^{\alpha}P_{|\xi-\eta|\lesssim N^{\beta'}}P_{|\xi+\eta|\lesssim N^{\beta'}}c(t)B\|_{L^{\infty}(dt) L^{2}( dx )L^2(dy)}\\
&\lesssim N^{-\epsilon}\left(\int |v_N|\right)\left(\sup_z\||\partial_t|^{\frac{1}{4}}\Lambda(t, x, x+z)\|_{L^2(dt dx)}+
\sup_z\||\nabla_x|^{\frac{1}{2}}\Lambda(t, x, x+z)\|_{L^2(dt dx)}\right)
\end{align*}

\end{proof}

Next, we record a form of Sobolev's inequality that has been used in the previous proof.
\begin{lemma} \label{angle}

  Assume the following Sobolev estimate holds in $\mathbb R^3$:\\
  $\|f\|_{L^q(\mathbb R^3)} \lesssim \|<\nabla>^{\alpha} f\|_{L^p(\mathbb R^3)}$.
  Then
\begin{align}
&\|F\|_{ L^{q}(dx) L^2(dy)}\notag
\lesssim
\|<\nabla_x+ \nabla_y>^{\alpha}F \|_{L^p(dx) L^2(dy)}\\
\end{align}
\end{lemma}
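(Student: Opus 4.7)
The plan is to represent the inverse operator $<\nabla_x + \nabla_y>^{-\alpha}$ as a ``diagonal convolution'' on $\mathbb R^6$ whose one-variable convolution kernel (in $\mathbb R^3_w$) is positive, and then to reduce the mixed-norm inequality to the hypothesized scalar Sobolev embedding by Minkowski in the inner $L^2(dy)$ variable. Setting $G := <\nabla_x+\nabla_y>^{\alpha} F$ and $F = <\nabla_x+\nabla_y>^{-\alpha} G$, the target becomes $\|<\nabla_x+\nabla_y>^{-\alpha}G\|_{L^q(dx)L^2(dy)} \lesssim \|G\|_{L^p(dx)L^2(dy)}$.

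The first step is the representation
\[
  <\nabla_x + \nabla_y>^{-\alpha} G(x, y) = \int_{\mathbb R^3} K_\alpha(w)\, G(x - w, y - w)\, dw,
\]
where $K_\alpha$ is the Bessel kernel on $\mathbb R^3$ with $\widehat{K_\alpha}(\xi) = (1 + |\xi|^2)^{-\alpha/2}$. This is a direct Fourier-side check: after substituting $x \mapsto x - w$, $y \mapsto y - w$ inside the $w$-integral, the full Fourier transform in $(x, y)$ of the right-hand side becomes $\widehat{K_\alpha}(\xi + \eta)\, \widetilde G(\xi, \eta) = (1 + |\xi+\eta|^2)^{-\alpha/2}\, \widetilde G(\xi, \eta)$, which is exactly the multiplier defining the left-hand side.

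The crucial input is the positivity $K_\alpha \geq 0$ on $\mathbb R^3$, a standard fact about Bessel potentials. This allows Minkowski's inequality in $y$ to produce an \emph{ordinary} positive convolution in $x$: pointwise in $x$,
\[
  \|F(x, \cdot)\|_{L^2(dy)} \leq \int K_\alpha(w)\, \|G(x - w, \cdot - w)\|_{L^2(dy)}\, dw = (K_\alpha * H)(x),
\]
where $H(x) := \|G(x, \cdot)\|_{L^2(dy)}$ and translation invariance of Lebesgue measure in $y$ was used. Taking $L^q(dx)$ of both sides and invoking the hypothesized Sobolev embedding in the equivalent form $\|K_\alpha * H\|_{L^q(\mathbb R^3)} \lesssim \|H\|_{L^p(\mathbb R^3)}$ then yields
\[
  \|F\|_{L^q(dx)L^2(dy)} \leq \|K_\alpha * H\|_{L^q(dx)} \lesssim \|H\|_{L^p(dx)} = \|G\|_{L^p(dx)L^2(dy)},
\]
which is the claim.

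There is no serious obstacle. What makes the argument work for \emph{any} pair $(p, q)$ compatible with the scalar Sobolev embedding, with no additional restriction such as $p \leq 2 \leq q$, is precisely the positivity of the Bessel kernel; this is why working with the convolution representation of $<\nabla_x + \nabla_y>^{-\alpha}$ is preferable to a direct Fourier/Plancherel argument in the $y$ variable, where Minkowski-swapping would impose such restrictions.
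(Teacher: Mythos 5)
Your proof is correct and is essentially the paper's own argument: the paper realizes $<\nabla_x+\nabla_y>^{-\alpha}$ as convolution with the kernel $L(x,y)=\delta(x-y)K(x+y)$, $K=\mathcal{F}^{-1}(<\xi>^{-\alpha})$, which after the change of variables is exactly your diagonal average against the Bessel kernel, and then applies Minkowski in $L^2(dy)$ plus the scalar Sobolev bound for $|K|*\cdot$, just as you do. The only cosmetic difference is that you parametrize the diagonal convolution directly (matching the multiplier $(1+|\xi+\eta|^2)^{-\alpha/2}$ exactly) and cite positivity of the Bessel kernel where the paper works with $|K|$.
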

\begin{proof}

This follows from the following observation applied to
$K(x) = \F^{-1}(<\xi>^{-\alpha})$:
Let $K(x)$ be such that $\||K| * f\|_{L^q(\mathbb R^3)} \lesssim \|f\|_{L^p(\mathbb R^3)}$,
Let $L(x, y)=\delta(x-y)K(x+y)$. Then
\begin{align*}
  \|L*F\|_{L^q(dx)L^2(dy)} \lesssim \|F\|_{L^p( dx) L^2(dy)}
  \end{align*}
A direct calculation shows
\begin{align*}
L*F(x, y)=\int K(2(x-z))F(z, y-x+z) dz
\end{align*}
so
\begin{align*}
&\|L*F(x, \cdot)\|_{L^2(dy)} \lesssim \big(|K| * \|F\|_{L^2(dy)}\big)(x)\\
&\|L*F(x, \cdot)\|_{L^q(dx)L^2(dy)} \lesssim  \|F\|_{L^p(dx)L^2(dy)}
\end{align*}
\end{proof}

\newpage

\section{Appendix: Proof of Proposition \ref{quartertime}}

We present two proofs of Proposition \ref{quartertime}. The first one is a direct calculation and slight generalization. The second one uses Littlewood-Paley decompositions. Notice the similarity with Bourgain's improved Strichartz estimate \cite{Bourgain}.

\subsection{First proof}
Proposition \ref{quartertime} follows from the following lemma.
\begin{lemma} Let $\Lambda$, $F$ be as in Proposition \ref{quartertime}. Let $0< 2 \epsilon<\epsilon_1$,
\begin{align*}
&\widehat{\sigma}(\xi_{1},\xi_{2}):=\frac{\vert\xi_{1}\vert\vert\xi_{2}\vert}
{\vert\xi_{1}\vert^{2}+\vert\xi_{2}\vert^{2}}
\\
&0\le  \gamma_{1}<\frac{3-a-\epsilon_1}{4}\qquad ,\qquad 1\leq a <2
\\
&0\le\gamma_{2}<\frac{3-\epsilon_1}{4}\\
\end{align*}
Then
\begin{align}
&\sup_z \big\Vert(|\partial_{t}|^{a/4}+|\nabla_{x}|^{a/2})\Lm(t,x+z,x)\big\Vert_{L^{2}(dtdx)}
\leq
\\
&C_{1}
\left\Vert
\frac{\big(\vert\xi_{1}\vert\vert\xi_{2}\vert\big)^{\frac{1+a+\epsilon_{1}}{4}}
\widehat{\sigma}^{\gamma_1}\widehat{F}
(\tau,\xi_{2},\xi_{1})}
{\big(1+\vert\tau-\vert\xi_{1}\vert^{2}-\vert\xi_{2}\vert^{2}\vert\big)^{\frac{1+\epsilon}{2}}}\right\Vert^2_{L^{2}(d\xi_{2}d\xi_{1})}
\nonumber
\\
&+C_{2}\left\Vert
\frac{\big(\vert\xi_{1}\vert\vert\xi_{2}\vert\big)^{\frac{1+\epsilon_{1}}{4}}
\widehat{\sigma}^{\gamma_2}\widehat{F}
(\tau,\xi_{2},\xi_{1})}
{\big(1+\vert\tau-\vert\xi_{1}\vert^{2}-\vert\xi_{2}\vert^{2}\vert\big)^{\frac{2-a+\epsilon}{4}}}\right\Vert_{L^{2}(d\xi_{2}d\xi_{1})}
\label{b5-clps3}
\end{align}

\end{lemma}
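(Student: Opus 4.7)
The homogeneous contribution $e^{it\Delta}\Lambda_0$ has already been handled in the proof of Lemma \ref{spacetime}, so I will focus on the inhomogeneous part
\[
  \Lambda(t,x_1,x_2)=\int c(t-s)e^{i(t-s)\Delta}F(s,x_1,x_2)\,ds.
\]
Modulo the harmless cutoff $c$ (whose boundedness in $X^{\delta}$ for $\delta<\tfrac12$ was recorded earlier), one has on the Fourier side
\[
  \widehat\Lambda(\tau,\xi_1,\xi_2)=\frac{\widehat F(\tau,\xi_1,\xi_2)}{\tau-|\xi_1|^2-|\xi_2|^2},
\]
so that the space-time Fourier transform of $\Lambda(t,x+z,x)$ evaluated at $(\tau,\xi)$ equals
\[
  h_z(\tau,\xi)=\int\frac{\widehat F(\tau,\xi_1,\xi-\xi_1)}{\tau-|\xi_1|^2-|\xi-\xi_1|^2}\,e^{iz\cdot\xi_1}\,d\xi_1.
\]
Applying Plancherel, pulling out $|\tau|^{a/2}+|\xi|^a$, and Cauchy-Schwarz in $\xi_1$ with a weight $w(\tau,\xi_1,\xi-\xi_1)$ yields, uniformly in $z$,
\[
  \bigl\Vert(|\partial_t|^{a/4}+|\nabla_x|^{a/2})\Lambda(t,x+z,x)\bigr\Vert^2_{L^2(dt\,dx)}\le\int\bigl(|\tau|^{a/2}+|\xi|^a\bigr)\,I(\tau,\xi)\int\frac{|\widehat F|^2}{w}\,d\xi_1\,d\tau\,d\xi,
\]
where $I(\tau,\xi)=\int w\,|\tau-|\xi_1|^2-|\xi-\xi_1|^2|^{-2}\,d\xi_1$.

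The heart of the matter is thus to choose $w$ so that $(|\tau|^{a/2}+|\xi|^a)\,I(\tau,\xi)\lesssim 1$ and simultaneously $w^{-1}$ reproduces one of the two weights on the right-hand side of \eqref{b5-clps3}. After completing the square the phase becomes $-2|\xi_1-\xi/2|^2+(\tau-|\xi|^2/2)$, so the resonant set is the sphere $|\xi_1-\xi/2|=r_0$ with $r_0^2\sim(\tau-|\xi|^2/2)/2$. I will split the integrand according to the modulation $M=|\tau-|\xi_1|^2-|\xi-\xi_1|^2|$ compared with the kinetic size $|\xi_1|^2+|\xi_2|^2$. In the non-resonant regime $M\gtrsim|\xi_1|^2+|\xi_2|^2$, the factor $|\tau|^{a/2}$ is absorbed into $M^{a/2}$ and I can afford a full $M^{-(1+\epsilon)}$ decay while paying a strong symbol weight $(|\xi_1||\xi_2|)^{(1+a+\epsilon_1)/2}$; this produces the first term of \eqref{b5-clps3}. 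In the near-resonant regime $M\ll|\xi_1|^2+|\xi_2|^2$ the $|\tau|^{a/2}$ power is no longer cheap, and I only recover a $M^{-(2-a+\epsilon)/2}$ decay paired with the weaker $(|\xi_1||\xi_2|)^{(1+\epsilon_1)/2}$, giving the second term.

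The angular weight $\widehat\sigma^{\gamma_i}$ enters through the polar integration on the resonant sphere. Passing to coordinates $\xi_1-\xi/2=r\omega$, the residual $d\omega$ integral concentrates near the subvariety where $|\xi_1|$ and $|\xi_2|$ are collinear of very different magnitudes; the standard Cauchy-Schwarz estimate on the sphere in $\mathbb R^3$ provides a gain of $r_0^{3-2\gamma}$ precisely as long as $2\gamma<3-a-\epsilon_1$ (respectively $2\gamma<3-\epsilon_1$), which fixes the admissible range for $\gamma_1,\gamma_2$. The factor $\widehat\sigma=|\xi_1||\xi_2|/(|\xi_1|^2+|\xi_2|^2)$ on the right-hand side exactly absorbs this degeneracy.

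The main obstacle I anticipate is the careful bookkeeping of the polar integral on the shell $|\xi_1-\xi/2|\sim r_0$ against both the weight $\widehat\sigma^{2\gamma}$ and the constraint $\xi_1+\xi_2=\xi$, since $|\xi_1|$, $|\xi_2|$ and $r_0$ are three intertwined scales rather than two. Once this calculation is carried out uniformly in $(\tau,\xi)$ and in the dyadic localization of $M$, the rest of the argument is a routine dyadic summation; the epsilons $\epsilon$ in the modulation exponent and $\epsilon_1$ on the symbol side are traded against each other precisely to guarantee convergence of these sums, which is why the two small parameters must satisfy $0<2\epsilon<\epsilon_1$.
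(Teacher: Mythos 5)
Your skeleton (Plancherel in $(t,x)$, weighted Cauchy--Schwarz in $\xi_1$, splitting $|\tau|^{a/2}$ between modulation and kinetic energy, an angular integration that produces the $\widehat\sigma$-constraints) is the same as the paper's, but your assignment of the two right-hand-side weights to the two regimes is reversed, and with your assignment the estimate does not close. In the near-resonant regime $M=|\tau-|\xi_1|^2-|\xi_2|^2|\ll K$, $K:=|\xi_1|^2+|\xi_2|^2$, one has $|\tau|^{a/2}\sim K^{a/2}$, and this factor can only be absorbed through the symbol weight via angular averaging (gaining $K^{-\beta}$ costs $(|\xi_1||\xi_2|)^{\beta}$ on $\widehat F$); weakening the modulation decay to $M^{-(2-a+\epsilon)/2}$ buys nothing there, since $M$ is small. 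Concretely, with the second term's weights in that region the $(\tau,\xi)$-uniform bound fails: the angular average gains $K^{-(1+\epsilon_1)/2}$, the radial integration over the resonant shell ($dM\sim\rho\,d\rho$, $\rho\sim K^{1/2}$) gives a factor $K^{1/2}$, and together with the prefactor $K^{a/2}$ you are left with $K^{(a-\epsilon_1)/2}\to\infty$ for $a\ge 1$. The correct pairing, as in the paper's inequality \eqref{b5-ineq1} and the two integrals \eqref{b5-int2}, \eqref{b5-int4}, is the opposite one: the kinetic piece $(|\xi_1|^2+|\xi_2|^2)^{a/4}$ goes with the strong weight $(|\xi_1||\xi_2|)^{\frac{1+a+\epsilon_1}{4}}$ and the full modulation decay (first term of \eqref{b5-clps3}), while the modulation piece $M^{a/4}$ goes with the weak weight and the reduced decay $(1+M)^{-\frac{2-a+\epsilon}{4}}$ (second term).

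The second gap is the angular computation itself, which you correctly flag as the main obstacle but whose numerology you guess incorrectly. The paper's mechanism is the explicit evaluation \eqref{b5-Iest1}: writing $(|\xi_1||\xi_2|)^2=(\xi_1\cdot\xi_2)^2+|\xi_1\wedge\xi_2|^2$, the spherical integral $\int_{S^2}\big(|\xi-\rho\omega||\xi+\rho\omega|\big)^{-2\beta}\,\widehat\sigma^{-2\gamma}\,d\omega$ is comparable to $(|\xi|^2+\rho^2)^{-2\beta}$ precisely when $\beta+\gamma<1$; it is this \emph{coupled} condition, with $\beta=\frac{1+a+\epsilon_1}{4}$ resp.\ $\beta=\frac{1+\epsilon_1}{4}$, that yields the stated ranges $\gamma_1<\frac{3-a-\epsilon_1}{4}$ and $\gamma_2<\frac{3-\epsilon_1}{4}$, not your uncoupled condition $2\gamma<3-a-\epsilon_1$ (resp.\ $2\gamma<3-\epsilon_1$), which is off by a factor of two and does not reflect the interaction between the $(|\xi_1||\xi_2|)^{\beta}$ weight and $\widehat\sigma^{\gamma}$. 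Two smaller points: the Duhamel term with the time cutoff gives only the pointwise bound $|\widetilde\Lambda|\lesssim|\widetilde F|\,(1+|\tau-|\xi_1|^2-|\xi_2|^2|)^{-1}$ (Lemma \ref{energy}), not the exact identity with the bare symbol you wrote; and the hypothesis $0<2\epsilon<\epsilon_1$ is consumed not by a dyadic summation but by the remaining radial integral $J^2(\tau,\xi)=\int\rho^2\,d\rho\,(1+|\tau-|\xi|^2-\rho^2|)^{-(1-\epsilon)}(|\xi|^2+\rho^2)^{-\frac{1+\epsilon_1}{2}}$, which is bounded uniformly exactly because $\epsilon_1>2\epsilon$.
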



\begin{proof}
First notice that we have to estimate the integral below,
\begin{align}
&\Big<\frac{\big(\vert\tau\vert^{a/4}+\vert\xi_{4}+\xi_{3}\vert^{a/2}\big)
\overline{\widehat{F}(\tau,\xi_{4},\xi_{3})}}{1+\vert\tau-\vert\xi_{4}\vert^{2}
-\vert\xi_{3}\vert^{2}\vert}
\frac{\big(\vert\tau\vert^{a/4}+\vert\xi_{2}+\xi_{1}\vert^{a/2}\big)
\widehat{F}(\tau,\xi_{2},\xi_{1})}{1+\vert\tau-\vert\xi_{2}\vert^{2}-\vert\xi_{1}\vert^{2}\vert}
\Big>_{\xi_{1}+\xi_{2}=\xi_{3}+\xi_{4}} \label{b5-quad1}\\
&:=\int <\cdots >\delta(\xi_{1}+\xi_{2}-\xi_{3}-\xi_{4})d \tau d \xi_1 \cdots d \xi_4 \notag
\end{align}
and we will employ the following inequality,
\begin{align}
\vert \tau\vert^{a/4}\leq C\left(\big\vert\tau-\vert\xi_{1}\vert^{2}
-\vert\xi_{2}\vert^{2}\big\vert^{a/4}+
\big(\vert\xi_{1}\vert^{2}+\vert\xi_{2}\vert^{2}
\big)^{a/4}\right)\ .
\label{b5-ineq1}
\end{align}
To compute the integral, let $\xi_{1+2}=\frac{\xi_1+\xi_2}{\sqrt 2}$, $\xi_{1-2}=\frac{\xi_1-\xi_2}{\sqrt 2}$ and similarly with $\xi_3$ and $\xi_4$. Then $\xi_{1+2}=\xi_{3+4}:=\xi$.
Next we set (in spherical coordinates)
\begin{align*}
&\xi_{1-2}:=\rho_{1}\omega_{1}\qquad ,\qquad \omega_{1}\in{\mathbb S}^{2}
\\
&\xi_{3-4}:=\rho_{2}\omega_{2}\qquad ,\qquad \omega_{2}\in{\mathbb S}^{2}
\end{align*}
We use Cauchy-Schwartz in the angular variables $(\omega_{1},\omega_{2})$
and for this purpose we introduce, (with $\beta$, $\gamma$ parameters to be chosen later)
\begin{align*}
\big(A_{\beta, \gamma}(\widehat{F})\big)^{2}:=
\int\limits_{{\mathbb S}^{2}} d\omega\left\{\big(\vert\xi-\rho\omega\vert\vert\xi+\rho\omega\vert \big)^{2\beta}
\left(\hat \sigma(\xi-\rho\omega, \xi+\rho\omega)\right)^{2 \gamma}
\vert\widehat{F}(\tau,\xi-\rho\omega,\xi+\rho\omega)\vert^{2}\right\}\ .
\end{align*}
Because of the inequality \ref{b5-ineq1} we obtain several terms one of which is
\begin{align}
&\big<\frac{(\vert\xi\vert^{2}+\rho^{2}_{2})^{a/4}I_{\beta}(\xi,\rho_{2})
A_{\beta}(\widehat{F}(\tau,\xi,\rho_{2}))}
{1+\vert\tau-\vert\xi\vert^{2}-\rho_{2}^{2}\vert}
\frac{(\vert\xi\vert^{2}+\rho^{2}_{1})^{a/4}I_{\beta}(\xi,\rho_{1})
A_{\beta}(\widehat{F}(\tau,\xi,\rho_{1}))}
{1+\vert\tau-\vert\xi\vert^{2}-\rho_{1}^{2}\vert}\big>_{d\mu}
\nonumber
\\
&{\rm where}\quad d\mu :=d\tau d\xi \rho_{1}^{2}\rho_{2}^{2}d\rho_{1}d\rho_{2}
\label{b5-int2}
\end{align}
and the integral $I_{\beta, \gamma}$ is,
\begin{align}
I_{\beta, \gamma}^{2}(\xi,\rho):=\int
\frac{d\omega}{(\vert\xi-\rho\omega\vert\vert\xi+\rho\omega\vert)^{2\beta}\left(\hat \sigma(\xi-\rho\omega, \xi+\rho\omega)\right)^{2 \gamma}}
\qquad 0<\beta <1\ .
\label{b5-Idef}
\end{align}
In addition, we have
\begin{align}
&\big<\frac{I_{\beta, \gamma}(\xi,\rho_{2})
A_{\beta, \gamma}(\widehat{F}(\tau,\xi,\rho_{2}))}
{(1+\vert\tau-\vert\xi\vert^{2}-\rho_{2}^{2}\vert)^{1-\frac{a}{4}}}
\frac{I_{\beta, \gamma}(\xi,\rho_{1})
A_{\beta, \gamma}(\widehat{F}(\tau,\xi,\rho_{1}))}
{(1+\vert\tau-\vert\xi\vert^{2}-\rho_{1}^{2}\vert)^{1-\frac{a}{4}}}\big>_{d\mu}
\label{b5-int4}
\end{align}
as well as cross terms which can be estimated in the same manner so we will ignore them.

The first important observation is the fact that we can estimate $I_{\beta, \gamma}$,
(we use the identity $(\vert\xi_{1}\vert\vert\xi_{2}\vert)^{2}=
(\xi_{1}\cdot\xi_{2})^{2}+\vert\xi_{1}\wedge\xi_{2}\vert^{2}$)
\begin{align*}
I^{2}_{\beta, \gamma}(\xi,\rho)&:=\int\limits_{{\mathbb S}^{2}}\frac{d\omega}{\big(\vert\xi-\rho\omega\vert\vert\xi+\rho\omega\vert\big)^{2\beta}\left(\hat \sigma(\xi-\rho\omega, \xi+\rho\omega)\right)^{2 \gamma}}\\
&=
\int\limits_{{\mathbb S}^{2}}\frac{\left(|\xi|^2+|\rho|^2\right)^{2 \gamma} d\omega}
{|\xi-\rho \omega|^{2 \beta+2\gamma}|\xi+\rho \omega|^{2 \beta+2\gamma}}\\
&=\left(|\xi|^2+|\rho|^2\right)^{2 \gamma}\int_{0}^{\pi}\frac{2\pi \sin(\phi)d\phi}
{\big((\vert\xi\vert^{2}-\rho^{2})^{2}+4\vert\xi\vert^{2}\rho^{2}\sin^{2}(\phi)\big)^{\beta+\gamma}}
\\
&=\frac{2\pi}{\big(\vert\xi\vert^{2}+\rho^{2}\big)^{2\beta-1}2\vert\xi\vert\rho}
\int\limits_{\vert z\vert\leq\frac{2\vert\xi\vert\rho}{\vert\xi\vert^{2}+\rho^{2}}}\frac{dz}{\big(1-z^{2}\big)^{\beta+\gamma}}
\sim \frac{C_{\beta, \gamma}}{\big(\vert\xi\vert^{2}+\rho^{2}\big)^{2\beta}}\ .
\end{align*}
provided $0 \le \beta + \gamma <1$
Thus we have the interesting fact,
\begin{align}
I_{\beta, \gamma}(\xi,\rho)\sim\frac{C_{\beta}}{(\vert\xi\vert^{2}+\rho^{2})^{\beta}}
\sim
\frac{C_{\beta}}{(\vert\xi_{1}\vert^{2}+\vert\xi_{2}\vert^{2})^{\beta}}
 \label{b5-Iest1}
\end{align}
due to averaging over the angular variable.

If we use Cauchy-Schwarz in the integrals \ref{b5-int2} and \ref{b5-int4} we end up estimating the
integral,
\begin{align*}
J^{2}(\tau ,\xi):=\int\limits_{{\mathbb R}^{+}}\frac{\rho^{2}d\rho}{
(1+\vert\tau -\vert\xi\vert^{2}-\rho^{2}\vert)^{1-\epsilon}
(\vert\xi\vert^{2}+\rho^{2})^{\frac{1+\epsilon_{1}}{2}}}
\sim \frac{1}{(1+\vert\tau-\vert\xi\vert^{2}\vert)^{\frac{\epsilon_{1}-2\epsilon}{2}}} \le C
\end{align*}
provided $\epsilon_1> 2\epsilon >0$.
Explicitly,  we get
\begin{align*}
\ref{b5-int2} =  \left\Vert \int
\frac{\rho^{2}d\rho(\vert\xi\vert^{2}+\rho^{2})^{a/4}I_{\beta, \gamma}(\xi,\rho)
A_{\beta, \gamma}(\widehat{F}(\tau,\xi,\rho))}
{1+\vert\tau-\vert\xi\vert^{2}-\rho^{2}\vert}\right\Vert^2_{L^2( d \tau d \xi)}
\end{align*}
Now we pick $\beta= \frac{1+a+ \epsilon_1}{4} $, $ \gamma=\gamma_1 <\frac{3-a-\epsilon_1}{4}$  ($0<2 \epsilon< \epsilon_1$) so that
\begin{align*}
\ref{b5-int2} &\le C  \left\Vert \int
\frac{\rho^{2}d\rho
A_{\beta, \gamma_1}(\widehat{F}(\tau,\xi,\rho))}
{(\vert\xi\vert^{2}+\rho^{2})^{\frac{1+\epsilon_1}{4}}\left(1+\vert\tau-\vert\xi\vert^{2}-\rho^{2}\vert\right)}\right\Vert^2_{L^2( d \tau d \xi)}\\
& \le C \left\Vert
\frac{\big(\vert\xi_{1}\vert\vert\xi_{2}\vert\big)^{\frac{1+\alpha+\epsilon_{1}}{4}}
\widehat{\sigma}^{\gamma_1}\widehat{F}
(\tau,\xi_{2},\xi_{1})}
{\big(1+\vert\tau-\vert\xi_{1}\vert^{2}-\vert\xi_{2}\vert^{2}\vert\big)^{\frac{1+\epsilon}{2}}}\right\Vert^2_{L^{2}(d\xi_{2}d\xi_{1})}
\end{align*}
As for \ref{b5-int4},

\begin{align*}
\ref{b5-int4} = \left\Vert \int
\frac{\rho^{2}d\rho I_{\beta, \gamma}(\xi,\rho)
A_{\beta, \gamma}(\widehat{F}(\tau,\xi,\rho))}
{(1+\vert\tau-\vert\xi\vert^{2}-\rho^{2}\vert)^{1-\frac{a}{4}}}
\right\Vert^2_{L^2( d \tau d \xi)}
\end{align*}
Here we pick $\beta=\frac{1+\epsilon_{1}}{4}$ and $0\le\gamma_2<\frac{3-\epsilon_{1}}{4}$
so that
\begin{align*}
\ref{b5-int4} & \le C \left\Vert \int
\frac{\rho^{2}d\rho
A_{\beta, \gamma_2}(\widehat{F}(\tau,\xi,\rho))}
{(\vert\xi\vert^{2}+\rho^{2})^{\frac{1+\epsilon_1}{4}}(1+\vert\tau-\vert\xi\vert^{2}-\rho^{2}\vert)^{1-\frac{a}{4}}}
\right\Vert^2_{L^2( d \tau d \xi)}\\
& \le C
\left\Vert
\frac{\big(\vert\xi_{1}\vert\vert\xi_{2}\vert\big)^{\frac{1+\epsilon_{1}}{4}}
\widehat{\sigma}^{\gamma_2}\widehat{F}
(\tau,\xi_{2},\xi_{1})}
{\big(1+\vert\tau-\vert\xi_{1}\vert^{2}-\vert\xi_{2}\vert^{2}\vert\big)^{\frac{2-a+\epsilon}{4}}}\right\Vert_{L^{2}(d\xi_{2}d\xi_{1})}
\end{align*}

\end{proof}

\subsection{Second proof}

\begin{lemma} Let $A=\{(\xi, \eta)\big| |\xi|\sim M,  |\eta|\sim N\}$ ($1< M \le N$) and let $\tilde F(\tau, \xi, \eta)$ supported in
$(\xi, \eta) \in A$ for every fixed $\tau$. Let $\epsilon>0$.

Let
\begin{align*}
|\tilde \Lambda(\tau, \xi, \eta) |\lesssim \frac{\tilde F(\tau, \xi, \eta)}{<\tau + |\xi|^2 + |\eta|^2>}
\end{align*}

Then, for all $\epsilon >0$,
\begin{align*}
&|\big|\partial_t\big|^{\frac{1}{4}}\Lambda(t, x, x)\big|\\
& \lesssim_{\epsilon} \left(\frac{M}{N}\right)^{\frac{1}{2}}\left(
\|<\nabla_x>^{\frac{1}{2}+\epsilon}<\nabla_y>^{\frac{1}{2}+\epsilon} F\|_{X^{-\left(\frac{1+\epsilon}{2}\right)}}+
\|<\nabla_x>^{\frac{1}{2}+\epsilon}<\nabla_y>^{\epsilon} F\|_{X^{-\left(\frac{1+\epsilon}{4}\right)}}\right)
\end{align*}

\end{lemma}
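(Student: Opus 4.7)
I read the left-hand side as the $L^2(dt\,dx)$-norm (the norm appears to be missing in the statement); the lemma is then a dyadic refinement of Proposition~\ref{quartertime}. The plan is to follow the first proof of that proposition with $a=1$, exploiting the dyadic localization $|\xi|\sim M$, $|\eta|\sim N$ to convert the factor $\widehat\sigma^{\gamma}$ appearing there into the numerical constant $(M/N)^{\gamma}$; since $\widehat\sigma=|\xi_1||\xi_2|/(|\xi_1|^2+|\xi_2|^2)\sim M/N$ in this regime, this is the origin of the gain $(M/N)^{1/2}$.

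First I would pass to the Fourier side. Setting $G(\tau,\xi):=\int\widetilde\Lambda(\tau,\xi-\rho,\rho)\,d\rho$ (the space-time Fourier transform of $\Lambda(t,x,x)$), one has
\begin{equation*}
\bigl\||\partial_t|^{1/4}\Lambda(t,x,x)\bigr\|_{L^2(dt\,dx)}^2=\int|\tau|^{1/2}|G(\tau,\xi)|^2\,d\tau\,d\xi,
\end{equation*}
where the $\rho$-integration is supported on $A_\xi:=\{\rho:|\xi-\rho|\sim M,\ |\rho|\sim N\}$. Because $M\le N$, the identity $\xi=(\xi-\rho)+\rho$ forces $|\xi|\sim N$ and $|\xi-\rho|^2+|\rho|^2\sim N^2$ on $A_\xi$.

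Next I would apply pointwise
\begin{equation*}
|\tau|^{1/4}\lesssim\langle\sigma\rangle^{1/4}+(|\xi-\rho|^2+|\rho|^2)^{1/4},\qquad\sigma:=\tau+|\xi-\rho|^2+|\rho|^2,
\end{equation*}
inside the integral defining $G$, splitting it into two pieces. For each piece I would use Cauchy--Schwarz in $\rho$, obtaining
\begin{equation*}
|G(\tau,\xi)|^2\le I(\tau,\xi)\int_{A_\xi}|\widetilde F(\tau,\xi-\rho,\rho)|^2\langle\sigma\rangle^{2a-2}\,d\rho,\quad I(\tau,\xi):=\int_{A_\xi}\frac{d\rho}{\langle\sigma\rangle^{2a}},
\end{equation*}
with $a=(1+\epsilon)/2$ so that the one-dimensional tail integral $\int dw/\langle w\rangle^{2a}$ converges. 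To estimate $I$ I would change variables to $\rho'=\rho-\xi/2$, using the identity $|\xi-\rho|^2+|\rho|^2=|\xi|^2/2+2|\rho'|^2$, and then pass to spherical coordinates $\rho'=r\omega$. The constraint $|\xi-\rho|\sim M$ confines $\omega$ to a spherical cap of solid angle $\sim(M/N)^2$; combined with the inner integral in the angular direction (which behaves like $1/(r|\xi|)$ because $\sigma$ depends linearly on $\omega\cdot\widehat\xi$), this yields $I\lesssim M^2/N$.

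Finally, the two pieces from the splitting feed into the two norms on the right: the $\langle\sigma\rangle^{1/4}$ piece produces, after Cauchy--Schwarz and the $I$-bound, the $X^{-(1+\epsilon)/4}$ contribution weighted by $\langle\nabla_x\rangle^{1/2+\epsilon}\langle\nabla_y\rangle^{\epsilon}$, while the $(|\xi-\rho|^2+|\rho|^2)^{1/4}\sim N^{1/2}$ piece produces the $X^{-(1+\epsilon)/2}$ contribution weighted by $\langle\nabla_x\rangle^{1/2+\epsilon}\langle\nabla_y\rangle^{1/2+\epsilon}$. The main obstacle will be the exponent bookkeeping: one must verify that the asymmetric derivative weights absorb exactly the powers of $M$ and $N$ left over from Cauchy--Schwarz and from $I\lesssim M^2/N$, leaving precisely the stated $(M/N)^{1/2}$ and not a different power. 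Once the geometric calculation that the solid angle of $A_\xi$ on the sphere $|\rho'|\sim N$ is $\sim(M/N)^2$ is granted, the remaining steps are parallel to the first proof of Proposition~\ref{quartertime}.
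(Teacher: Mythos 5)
You have correctly identified the missing $L^2(dt\,dx)$ norm and your strategy is essentially the paper's: Plancherel in $(t,x)$, the pointwise splitting $|\tau|^{1/4}\lesssim(|\xi-\rho|^2+|\rho|^2)^{1/4}+\langle\sigma\rangle^{1/4}$ with $\sigma=\tau+|\xi-\rho|^2+|\rho|^2$, Cauchy--Schwarz in the convolution variable, and the key counting bound $I\lesssim M^2/N$ coming from the $(M/N)^2$ solid-angle localization. (Two small slips there: $|\xi|\sim N$ is false when $M\sim N$, though only $|\xi-\rho|^2+|\rho|^2\sim N^2$ is actually needed; and after your change of variables $\rho'=\rho-\xi/2$ the quantity $\sigma=\tau+|\xi|^2/2+2|\rho'|^2$ is radial in $\rho'$, so the ``linear in $\omega\cdot\widehat\xi$'' remark does not apply --- but the conclusion $I\lesssim M^2/N$, solid angle $(M/N)^2$ times a radial integral $\lesssim N$, is correct and matches the paper's surface-measure computation.)

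The genuine gap is in how you distribute the $\langle\sigma\rangle$-weight in Cauchy--Schwarz, and it is not mere bookkeeping. With $a=(1+\epsilon)/2$ you place the full integrable power $\langle\sigma\rangle^{-(1+\epsilon)}$ inside $I$ and leave $\langle\sigma\rangle^{2a-2}=\langle\sigma\rangle^{-(1-\epsilon)}$ against $|\tilde F|^2$; after squaring and integrating, the $N^{1/2}$-piece is then bounded by (powers of $M,N$ times) $\|F\|_{X^{-\frac{1-\epsilon}{2}}}$ and the $\langle\sigma\rangle^{1/4}$-piece by an $X^{-\frac{1}{4}+\frac{\epsilon}{2}}$-type norm. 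These are strictly larger norms than the stated $X^{-\frac{1+\epsilon}{2}}$ and $X^{-\frac{1+\epsilon}{4}}$, and they cannot be traded for powers of $M$ and $N$: only $\xi,\eta$ are dyadically localized on the support, while $\tau$ (hence $\sigma$) is unbounded, so the inequality $\langle\sigma\rangle^{2\epsilon}\lesssim(MN)^{2\epsilon}$ that your bookkeeping would need is false. The fix is the paper's arrangement: keep $\langle\sigma\rangle^{-(1+\epsilon)}$ (respectively $\langle\sigma\rangle^{-\frac{1+2\epsilon}{2}}$) with $|\tilde F|^2$, so that the $F$-factor matches the stated norms exactly, and put only $\langle\sigma\rangle^{-(1-\epsilon)}$ in the counting integral, compensating its divergence by borrowing $\langle\xi-\rho\rangle^{-2\epsilon}\langle\rho\rangle^{-2\epsilon}\sim(MN)^{-2\epsilon}$ from the extra $\epsilon$-derivatives in the statement; this beats the $N^{2\epsilon}$ loss from $\int_{|u|\lesssim N^2}\langle u\rangle^{-(1-\epsilon)}du$ once the region $|\tau|\gtrsim N^2$ is treated separately (there the crude volume bound $M^3/N^2$ suffices). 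In other words, the $\epsilon$-derivative weights in the right-hand side are not slack: they exist precisely to absorb the divergence of the transverse integral after the $\sigma$-weights are placed on the $F$-side, and with your placement the stated estimate does not follow.
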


\begin{proof} The proof is based on the following calculation:
\begin{align*}
I=\int_{(\xi+ \frac{ \eta}{2},\xi-\frac{ \eta}{2}) \in A} \frac{1}{<\tau + |\xi+ \frac{ \eta}{2}|^2 + |\xi- \frac{ \eta}{2}|^2>^{1-\epsilon}<\xi+ \frac{ \eta}{2}>^{2 \epsilon}<\xi- \frac{ \eta}{2}>^{2 \epsilon}}d \eta \lesssim \frac{M^2}{N}
\end{align*}
If $|\tau|> N^2$, the integral is $\lesssim \frac{M^3}{N^2}$. So assume $|\tau| <N^2$. Then, writing $u=\tau + |\xi+ \frac{ \eta}{2}|^2 + |\xi- \frac{ \eta}{2}|^2$,
\begin{align*}
  I\lesssim \frac{1}{M^{2 \epsilon}N^{2 \epsilon}}\int_{|u|\lesssim N^2} \frac{1}{<u>^{1-\epsilon}}\int_{(\xi+ \frac{ \eta}{2},\xi-\frac{ \eta}{2}) \in A} \delta (\tau -u + |\xi+ \frac{ \eta}{2}|^2 + |\xi- \frac{ \eta}{2}|^2) d \eta du
  \end{align*}
and
\begin{align*}
\sup_{\tau, \xi} \int_{(\xi+ \frac{ \eta}{2},\xi-\frac{ \eta}{2}) \in A} \delta (\tau + |\xi+ \frac{ \eta}{2}|^2 + |\xi- \frac{ \eta}{2}|^2) d \eta\\
\sim \int_{(\xi+ \frac{ \eta}{2},\xi-\frac{ \eta}{2}) \in A} \frac{ d S}{|\eta|} \lesssim \frac{M^2}{N}
\end{align*}
where the integral is taken over the paraboloid $\tau + |\xi+ \frac{ \eta}{2}|^2 + |\xi- \frac{ \eta}{2}|^2=0$.

 If $N>>M$, then, in the region of integration,
 $|\eta| \ge c N$, and the area of a piece of the paraboloid in the ball
 $\{ \eta \big| |\xi+\eta| \le M\}$ is $\le C M^2$.
 If $M \sim N$, the integral is $\lesssim M$.

Continuing,
we write\\
 $|\tau|^{\frac{1}{4}} \le |\xi+ \frac{ \eta}{2}|^{\frac{1}{2}}+ |\xi- \frac{ \eta}{2}|^{\frac{1}{2}}
+ <\tau + |\xi+ \frac{ \eta}{2}|^2+|\xi- \frac{ \eta}{2}|^2>^{\frac{1}{4}}$
and have to estimate the corresponding terms in
\begin{align*}
|\tau|^{\frac{1}{4}}\big|\mathcal F (\Lambda(t, x, x))(\tau, \xi)\big| \lesssim \int \frac{|\tau|^{\frac{1}{4}}|\tilde F(\tau,
\xi+ \frac{ \eta}{2}, \xi- \frac{ \eta}{2})|}{<\tau + |\xi+ \frac{ \eta}{2}|^2 + |\xi- \frac{ \eta}{2}|^2>} d \eta
\end{align*}
We estimate, separately,

\begin{align*}
&\int \frac{|\xi- \frac{ \eta}{2}|^{\frac{1}{2}}|\tilde F(\tau,
\xi+ \frac{ \eta}{2}, \xi- \frac{ \eta}{2})|}{<\tau + |\xi+ \frac{ \eta}{2}|^2 + |\xi- \frac{ \eta}{2}|^2>} d \eta\\
& \le N^{\frac{1}{2}}\left(\int \frac{|\tilde F(\tau,
\xi+ \frac{ \eta}{2}, \xi- \frac{ \eta}{2})|^2 <\xi+ \frac{ \eta}{2}>^{2 \epsilon}<\xi- \frac{ \eta}{2}>^{2 \epsilon}}{<\tau + |\xi+ \frac{ \eta}{2}|^2 + |\xi- \frac{ \eta}{2}|^2>^{1+\epsilon}} d \eta\right)^{\frac{1}{2}}\\
&\left(\int_{(\xi+ \frac{ \eta}{2},\xi-\frac{ \eta}{2}) \in A} \frac{1}{<\tau + |\xi+ \frac{ \eta}{2}|^2 + |\xi- \frac{ \eta}{2}|^2>^{1-\epsilon}<\xi+ \frac{ \eta}{2}>^{2 \epsilon}<\xi- \frac{ \eta}{2}>^{2 \epsilon}} d \eta\right)^{\frac{1}{2}}\\
&\lesssim \left(\frac{M}{N}\right)^{\frac{1}{2}}
\left(\int \frac{|\tilde F(\tau,
\xi+ \frac{ \eta}{2}, \xi- \frac{ \eta}{2})|^2 <\xi+ \frac{ \eta}{2}>^{1+2 \epsilon}<\xi- \frac{ \eta}{2}>^{1+2 \epsilon}}{<\tau + |\xi+ \frac{ \eta}{2}|^2 + |\xi- \frac{ \eta}{2}|^2>^{1+\epsilon}} d \eta\right)^{\frac{1}{2}}
\end{align*}
Squaring and integrating $d \tau d\xi$, the above is\\
$ \lesssim
\left(\frac{M}{N}\right)^{\frac{1}{2}} \|<\nabla_x>^{\frac{1}{2}+\epsilon}<\nabla_y>^{\frac{1}{2}+\epsilon} F\|_{X^{-\left(\frac{1+\epsilon}{2}\right)}}$
The argument for
\begin{align*}
&\int \bigg|\int \frac{<\xi+ \frac{ \eta}{2}>^{\frac{1}{2}}|\tilde F(\tau,
\xi+ \frac{ \eta}{2}, \xi- \frac{ \eta}{2})|}{<\tau + |\xi+ \frac{ \eta}{2}|^2 + |\xi- \frac{ \eta}{2}|^2>} d \eta\bigg|
d \tau d\xi
\end{align*}
is easier, because $|\xi+\eta|\le |\xi-\eta|$ in $A$.
Finally,

\begin{align*}
&\int \bigg|\int \frac{<\tau + |\xi+ \frac{ \eta}{2}|^2 + |\xi- \frac{ \eta}{2}|^2>^{\frac{1}{4}}|\tilde F(\tau,
\xi+ \frac{ \eta}{2}, \xi- \frac{ \eta}{2})|}{<\tau + |\xi+ \frac{ \eta}{2}|^2 + |\xi- \frac{ \eta}{2}|^2>} d \eta\bigg|
d \tau d\xi\\
&\le\left(\int \frac{|\tilde F(\tau,
\xi+ \frac{ \eta}{2}, \xi- \frac{ \eta}{2})<\xi+ \frac{ \eta}{2}>^{ \epsilon}<\xi- \frac{ \eta}{2}>^{ \epsilon}|^2}{<\tau + |\xi+ \frac{ \eta}{2}|^2 + |\xi- \frac{ \eta}{2}|^2>^{\frac{1+2\epsilon}{2}}} d \eta\right)^{\frac{1}{2}}\\
&\left(\int \frac{1}{<\tau + |\xi+ \frac{ \eta}{2}|^2 + |\xi- \frac{ \eta}{2}|^2>^{1-\epsilon}<\xi+ \frac{ \eta}{2}>^{2 \epsilon}<\xi- \frac{ \eta}{2}>^{2 \epsilon}} d \eta\right)^{\frac{1}{2}}\\
&\lesssim \frac{M}{N^{1/2}} \left(\int \frac{|\tilde F(\tau,
\xi+ \frac{ \eta}{2}, \xi- \frac{ \eta}{2})<\xi+ \frac{ \eta}{2}>^{ \epsilon}<\xi- \frac{ \eta}{2}>^{ \epsilon}|^2}{<\tau + |\xi+ \frac{ \eta}{2}|^2 + |\xi- \frac{ \eta}{2}|^2>^{\frac{1+2\epsilon}{2}}} d \eta\right)^{\frac{1}{2}}\\
&\lesssim \left(\frac{M}{N}\right)^{\frac{1}{2} }\left(\int \frac{||\xi+ \frac{ \eta}{2}|^{\frac{1}{2} }\tilde F(\tau,
\xi+ \frac{ \eta}{2}, \xi- \frac{ \eta}{2})<\xi+ \frac{ \eta}{2}>^{ \epsilon}<\xi- \frac{ \eta}{2}>^{ \epsilon}|^2}{<\tau + |\xi+ \frac{ \eta}{2}|^2 + |\xi- \frac{ \eta}{2}|^2>^{\frac{1+2\epsilon}{2}}} d \eta\right)^{\frac{1}{2}}
\end{align*}

\end{proof}

Using this, we can prove
\begin{proposition} Let $\alpha >\frac{1}{2}$. Then
\begin{align*}
\sup_z \|\big|\partial_t\big|^{\frac{1}{4}}\left(\Lambda(t, x, x+z)\right)\|_{L^2(dt dx)} \lesssim
\|<\nabla_x>^{\alpha}<\nabla_y>^{\alpha} F\|_{X^{-\frac{1}{2}-}}\\+
\|<\nabla_x>^{\alpha}<\nabla_y>^{\alpha-\frac{1}{2}} F\|_{X^{-\frac{1}{4}-}}
\end{align*}
\end{proposition}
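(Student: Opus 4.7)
The plan is to combine the dyadic Lemma immediately above with a Littlewood--Paley decomposition of $F$ and then sum using Cauchy--Schwarz over dyadic scales. Decompose $F = \sum_{M,N\ge 1} F_{M,N}$ with $F_{M,N}$ Fourier-supported on $\{|\xi|\sim M,\,|\eta|\sim N\}$, and correspondingly $\Lambda = \sum_{M,N}\Lambda_{M,N}$ via Duhamel. Pulling $|\cdot|$ inside the $\eta$-integral representing $\mathcal F(\Lambda(t,x,x+z))(\tau,\xi)$ shows that the unimodular phase $e^{iz\eta}$ is harmless, so the estimate is uniform in $z$ and I may work at $z=0$. Plancherel plus the triangle inequality then yields
\begin{align*}
\bigl\||\partial_t|^{1/4}\Lambda(t,x,x)\bigr\|_{L^2(dt\,dx)} \;\le\; \sum_{M,N}\bigl\||\partial_t|^{1/4}\Lambda_{M,N}(t,x,x)\bigr\|_{L^2(dt\,dx)}.
\end{align*}

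Next I split the sum into $M\le N$ and $M>N$; the two regimes are exchanged by the $x\leftrightarrow y$ symmetry of the diagonal $\Lambda(t,x,x)$. For $M\le N$ the Lemma provides
\begin{align*}
\bigl\||\partial_t|^{1/4}\Lambda_{M,N}\bigr\|_{L^2} \;\lesssim\; (M/N)^{1/2}\Bigl(M^{\frac12+\epsilon}N^{\frac12+\epsilon}\|F_{M,N}\|_{X^{-(1+\epsilon)/2}} + M^{\frac12+\epsilon}N^{\epsilon}\|F_{M,N}\|_{X^{-(1+\epsilon)/4}}\Bigr),
\end{align*}
and the reversed Lemma handles $M>N$; that latter contribution will be absorbed into the symmetric first term of the proposition's right-hand side, so the asymmetric gain furnished by the second term of the Lemma is only needed in the regime $M\le N$.

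Now factor each dyadic bound as a weight times a Plancherel block: the first term becomes $M^{1+\epsilon-\alpha}N^{\epsilon-\alpha}\cdot M^{\alpha}N^{\alpha}\|F_{M,N}\|_{X^{-(1+\epsilon)/2}}$, and the second becomes $M^{1+\epsilon-\alpha}N^{\epsilon-\alpha}\cdot M^{\alpha}N^{\alpha-1/2}\|F_{M,N}\|_{X^{-(1+\epsilon)/4}}$. By almost-orthogonality of the Littlewood--Paley pieces in the $L^2$-based $X^b$ spaces, the Plancherel blocks sum in $\ell^2(M,N)$ to $\|\langle\nabla_x\rangle^{\alpha}\langle\nabla_y\rangle^{\alpha}F\|_{X^{-1/2-}}$ and $\|\langle\nabla_x\rangle^{\alpha}\langle\nabla_y\rangle^{\alpha-1/2}F\|_{X^{-1/4-}}$ respectively, so Cauchy--Schwarz reduces the entire argument to finiteness of the single weight sum
\begin{align*}
\sum_{0\le m\le n} 2^{2m(1+\epsilon-\alpha)+2n(\epsilon-\alpha)}.
\end{align*}
This is the only substantive obstacle: splitting according to whether $\alpha<1+\epsilon$ (inner sum dominated by $m\sim n$, giving $2^{2n(1+2\epsilon-2\alpha)}$) or $\alpha\ge 1+\epsilon$ (inner sum $O(1)$, leaving $2^{2n(\epsilon-\alpha)}$), one sees convergence precisely when $\alpha>\tfrac{1}{2}+\epsilon$, which is achievable since $\alpha>\tfrac{1}{2}$ is assumed and $\epsilon$ may be chosen arbitrarily small. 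This is the step in which the hypothesis $\alpha>\tfrac{1}{2}$ enters essentially; everything else is Plancherel, dyadic orthogonality, and the Lemma already in hand.
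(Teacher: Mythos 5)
Your strategy is the same as the paper's second proof (dyadic lemma, Littlewood--Paley decomposition, geometric summation using $\alpha>\tfrac12$), and your treatment of the regime $M\le N$ --- the block weights, the exact orthogonality of the blocks in the $X^{-b}$ norms, the Cauchy--Schwarz reduction to $\sum_{m\le n}2^{2m(1+\epsilon-\alpha)+2n(\epsilon-\alpha)}$, and the uniformity in $z$ --- is correct. The gap is in the regime $M>N$. There the reflected Lemma gives
\begin{align*}
\bigl\||\partial_t|^{1/4}\Lambda_{M,N}(t,x,x)\bigr\|_{L^2}
\lesssim \Bigl(\tfrac{N}{M}\Bigr)^{1/2}\Bigl(M^{\frac12+\epsilon}N^{\frac12+\epsilon}\|F_{M,N}\|_{X^{-\frac{1+\epsilon}{2}}}
+M^{\epsilon}N^{\frac12+\epsilon}\|F_{M,N}\|_{X^{-\frac{1+\epsilon}{4}}}\Bigr),
\end{align*}
and the second term lives in $X^{-\frac{1+\epsilon}{4}}$, which is the \emph{stronger} norm: since $\langle\sigma\rangle^{-\frac{1+\epsilon}{4}}\ge\langle\sigma\rangle^{-\frac{1+\epsilon}{2}}$ and the modulation is unbounded on a spatial dyadic block, there is no bound of an $X^{-\frac14-}$ norm by an $X^{-\frac12-}$ norm of the same block. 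So this piece cannot be ``absorbed into the symmetric first term'' of the proposition, and your assertion that the asymmetric second term of the right-hand side is only needed when $M\le N$ is false; as written, that step fails.

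The repair stays inside your framework but must be done: play the $X^{-\frac{1+\epsilon}{4}}$ piece against the proposition's second term $\|\langle\nabla_x\rangle^{\alpha}\langle\nabla_y\rangle^{\alpha-\frac12}F\|_{X^{-\frac14-}}$, whose block weight is $M^{\alpha}N^{\alpha-\frac12}$. Note that after reflection the Lemma's extra half-derivative sits on the low-frequency variable $y$, so this is \emph{not} the mirror image of your $m\le n$ computation and needs its own weight sum: the ratio is $M^{\epsilon-\frac12-\alpha}N^{\frac32+\epsilon-\alpha}$, whose square-sum over $N\le M$ is comparable to $\sum_N N^{2+4\epsilon-4\alpha}$, finite again exactly for $\alpha>\tfrac12+\epsilon$. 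This is in effect what the paper does in its regime $2^i\sim2^j\gg2^k$, converting the $X^{-\frac{1+\epsilon}{4}}$ block into $2^{-i(\alpha-\frac12-\epsilon)}\|\langle\nabla_x\rangle^{\alpha}\langle\nabla_y\rangle^{\alpha-\frac12}P^1_{2^j}P^2_{2^k}F\|_{X^{-\frac{1+\epsilon}{4}}}$ and using the $2^{\frac{k-j}{2}}$ gain to sum over the low frequency. With that correction (the symmetric $X^{-\frac{1+\epsilon}{2}}$ piece in $M>N$ really is the mirror image of your sum and is fine), your argument closes; the only remaining difference from the paper is that you avoid decomposing the output frequency, replacing the paper's paraproduct bookkeeping by orthogonality of the input blocks plus Cauchy--Schwarz, which is legitimate.
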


Recall we chose  $\varphi\in C_0^{\infty}(-2, 2)$ to be identically $1$ on $[-1, 1]$, and let $\beta(\xi)=\varphi(\xi)-\varphi(2\xi)$, so that
$\phi(\xi)+ \sum_{k=1}^{\infty} \beta(\frac{\xi}{2^k})=1$. For function $f(x)$, $\Lambda(x, y)$, and $k \ge 1$, define the projections
$P_{2^k}(f)$, $
P^1_{2^k}P^2_{2^l}(\Lambda)$ by localizing in Fourier space

\begin{align*}
&\mathcal F (P_{2^k}(f))(\xi)=\beta\left(\frac{|\xi|}{2^k}\right)\hat f(\xi)\\
&\mathcal F (P^1_{2^k}P^2_{2^l}(\Lambda))(\xi, \eta)=\beta\left(\frac{|\xi|}{2^k}\right)\beta\left(\frac{|\eta|}{2^l}\right)\hat \Lambda(\xi, \eta)
\end{align*}
We choose to include all low frequencies in $P_{2^0}$:
\begin{align*}
&\mathcal F (P_1(f))(\xi)=\phi(|\xi|)\hat f(\xi)\\
&\mathcal F (P^1_1P^2_1(\Lambda)(\xi, \eta)=\phi(|\xi|)\phi(|\eta|)\hat \Lambda(\xi, \eta)
\end{align*}

We have the equivalent of the standard Littlewood-Paley product decomposition (in space variables):
\begin{align*}
&P_{2^i}\left(\Lambda(t, x, x)\right)\\
&= \left(\sum_{2^i<<2^j\sim2^k }+
\sum_{2^i \sim 2^j\sim2^k }
+\sum_{2^i\sim2^k >> 2^j} +
\sum_{2^i\sim2^j >> 2^k}
\right)
P_{2^i}\left(P^1_{2^j} P^2_{2^k} \Lambda(t, \cdot, \cdot)\right)
\end{align*}
and (taking $\alpha - \frac{1}{2} > 2 \epsilon>0$)
\begin{align*}
&\|\big|\partial_t\big|^{\frac{1}{4}}P_{2^i}(\Lambda(t, x, x))\|_{L^2(dt dx)}\\
&\le
\left(\sum_{2^i<<2^j\sim2^k }+
\sum_{2^j\sim2^k \sim 2^i}
+\sum_{2^i\sim2^k >> 2^j} +
\sum_{2^i\sim2^j >> 2^k}
\right)
\|\big|\partial_t\big|^{\frac{1}{4}}P_{2^i}\left(\left(P^1_{2^j} P^2_{2^k} \Lambda\right)(t, x, x)\right)\|_{L^2(dt dx)}\\
&\lesssim
\left(\sum_{2^i<<2^j\sim2^k }+
\sum_{2^i\sim2^j \sim 2^k}\right)
\bigg(
\|<\nabla_x>^{\frac{1}{2}+\epsilon}<\nabla_y>^{\frac{1}{2}+\epsilon}P^1_{2^j} P^2_{2^k} F\|_{X^{-\frac{1+\epsilon}{2}}}\\
&+
\|<\nabla_x>^{\frac{1}{2}+\epsilon}<\nabla_y>^{\epsilon}P^1_{2^j} P^2_{2^k} F\|_{X^{-\frac{1+\epsilon}{4}}}\bigg)\\
&+\sum_{2^i\sim2^k >> 2^j}  2^{\frac{j-k}{2}} \bigg(
\|<\nabla_x>^{\frac{1}{2}+\epsilon}<\nabla_y>^{\frac{1}{2} + \epsilon}P^1_{2^j} P^2_{2^k} F\|_{X^{-\frac{1+\epsilon}{2}}}\\+
&\|<\nabla_x>^{\frac{1+\epsilon}{2}}<\nabla_y>^{\epsilon} P^1_{2^j} P^2_{2^k}F\|_{X^{-\frac{1+\epsilon}{4}}}\bigg)\\
&+\sum_{2^i\sim2^j >> 2^k} 2^{\frac{k-j}{2}}\bigg(
\|<\nabla_x>^{\frac{1+\epsilon}{2}}<\nabla_y>^{\frac{1+\epsilon}{2}}P^1_{2^j} P^2_{2^k} F\|_{X^{-\frac{1+\epsilon}{2}}}\\
&+
\|<\nabla_x>^{\frac{1+\epsilon}{2}}<\nabla_y>^{\epsilon} P^1_{2^j} P^2_{2^k}F\|_{X^{-\frac{1+\epsilon}{4}}}\bigg)\\
&\lesssim
\left(\sum_{2^i<<2^j\sim2^k }+
\sum_{2^i\sim2^j \sim 2^k}\right)
2^{-j(\alpha-\frac{1}{2}-\epsilon)}\bigg(
\|<\nabla_x>^{\alpha}<\nabla_y>^{\alpha}P^1_{2^j} P^2_{2^k} F\|_{X^{-\frac{1+\epsilon}{2}}}\\
&+
\|<\nabla_x>^{\alpha}<\nabla_y>^{\alpha-\frac{1}{2}}P^1_{2^j} P^2_{2^k} F\|_{X^{-\frac{1+\epsilon}{4}}}\bigg)\\
&+\sum_{2^i\sim2^k >> 2^j} 2^{- i(\alpha-\frac{1}{2}-\epsilon)} 2^{\frac{j-k}{2}} \bigg(
\|<\nabla_x>^{\alpha}<\nabla_y>^{\alpha}P^1_{2^j} P^2_{2^k} F\|_{X^{-\frac{1+\epsilon}{2}}}\\
&+
\|<\nabla_x>^{\alpha}<\nabla_y>^{\alpha-\frac{1}{2}} P^1_{2^j} P^2_{2^k}F\|_{X^{-\frac{1+\epsilon}{4}}}\bigg)\\
&+\sum_{2^i\sim2^j >> 2^k} 2^{- i(\alpha-\frac{1}{2}-\epsilon)} 2^{\frac{k-j}{2}}\bigg(
\|<\nabla_x>^{\alpha}<\nabla_y>^{\alpha}P^1_{2^j} P^2_{2^k} F\|_{X^{-\frac{1+\epsilon}{2}}}\\
&+
\|<\nabla_x>^{\alpha}<\nabla_y>^{\alpha-\frac{1}{2}} P^1_{2^j} P^2_{2^k}F\|_{X^{-\frac{1+\epsilon}{4}}}\bigg)\\
&\lesssim 2^{- i(\alpha-\frac{1}{2}-\epsilon)}\left(
\|<\nabla_x>^{\alpha}<\nabla_y>^{\alpha} F\|_{X^{-\frac{1+\epsilon}{2}}}+
\|<\nabla_x>^{\alpha}<\nabla_y>^{\alpha-\frac{1}{2}} F\|_{X^{-\frac{1+\epsilon}{4}}}\right)
\end{align*}
Now we sum over $i$ to get the desired result.

\end{document}